\newtheorem{theorem}{Theorem}[section]
\newtheorem{conj}{Conjecture}
\newtheorem{corollary}[theorem]{Corollary}
\newtheorem{lemma}[theorem]{Lemma}
\newtheorem{proposition}[theorem]{Proposition}
\newtheorem{definition}[theorem]{Definition}
\newtheorem{remark}[theorem]{Remark}
\newtheorem{question}[conj]{Question}
\renewcommand{\P}{\mathbb P_{\!\frac12}}
\newcommand{\Pp}{\mathbb P_p}
\newcommand{\Z}{\mathbb{Z}}
\newcommand{\ep}{\varepsilon}
\newcommand{\CC}{{\mathbb C}}
\newcommand{\CLE}{{\mathrm{CLE}}}
\newcommand{\SLE}{{\mathrm{SLE}}}
\newcommand{\eg}{\emph{e.g.}}
\newcommand{\ie}{\emph{i.e.}}
\newcommand{\dd}{{\mathrm{d}}}
\newcommand{\decr}{\mathop{\makebox[0pt][l]{\kern0.5em$\downarrow$}\bigcap}}
\newcommand{\eqd}{:=}
\newcommand{\incr}{\mathop{\makebox[0pt][l]{\kern0.5em$\uparrow$}\bigcup}}
\begin{document}

\begin{frontmatter}

\title{Planar percolation with a glimpse of Schramm--Loewner Evolution\thanksref{t1}}
\thankstext{t1}{This is an original survey paper}
\runtitle{Planar percolation with a glimpse of $\SLE$}

\author{\fnms{Vincent} \snm{Beffara}\ead[label=e1]{vbeffara@ens-lyon.fr}}
\address{Unité de Mathématiques Pures et Appliquées\\
         École Normale Supérieure de Lyon\\
         F-69364 Lyon CEDEX 7, France\\
         \printead{e1}}

\author{\fnms{Hugo} \snm{Duminil-Copin}\ead[label=e2]{hugo.duminil@unige.ch}}
\address{Département de Mathématiques\\
         Université de Genève\\
         Genève, Switzerland\\
         \printead{e2}}

\runauthor{Beffara and Duminil-Copin}

\vfill

\begin{center}
  \includegraphics[width=1.00\textwidth]{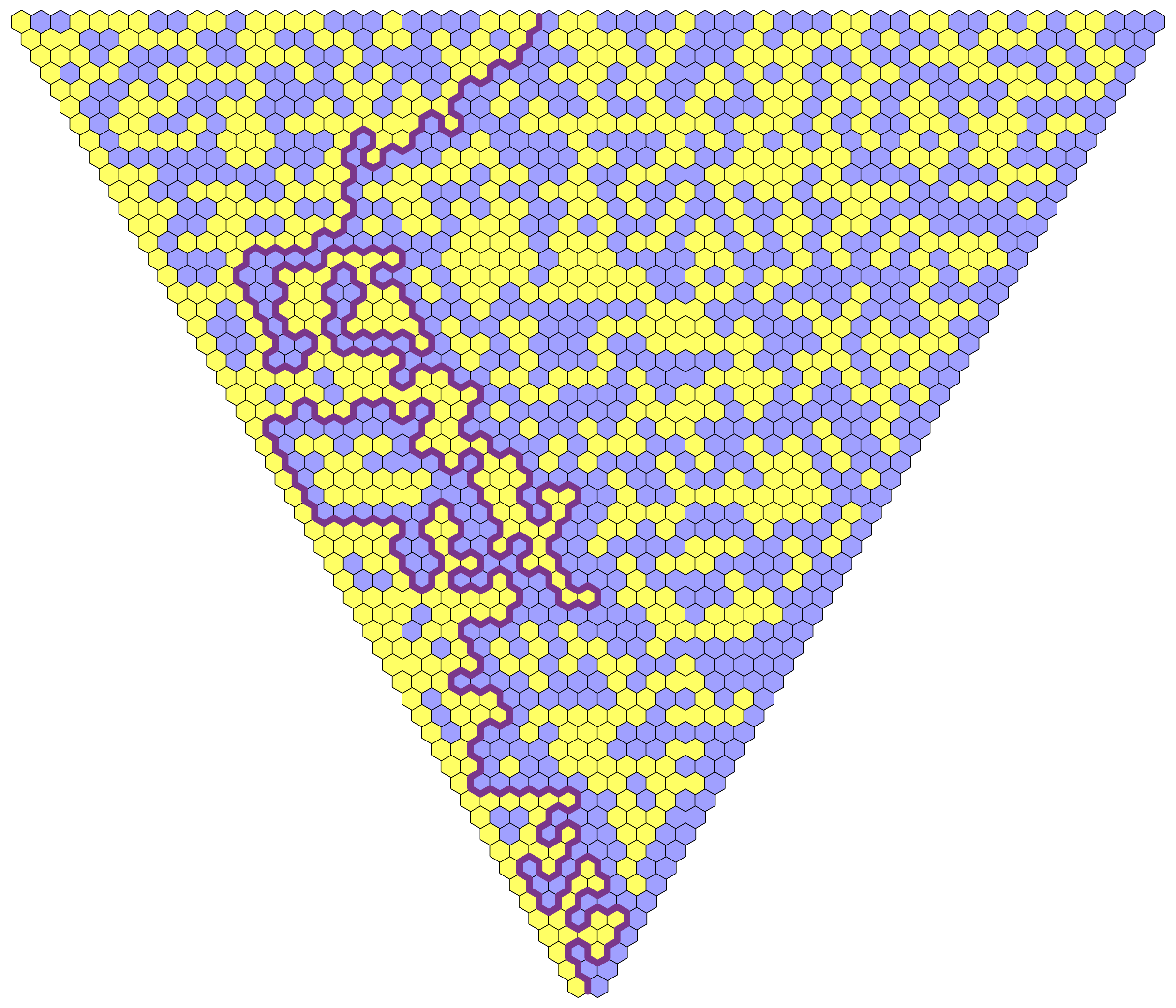}
\end{center}

\break

\begin{abstract}
  In recent years, important progress has been made in the field of
  two-dimensional statistical physics. One of the most striking
  achievements is the proof of the Cardy--Smirnov formula. This theorem,
  together with the introduction of Schramm--Loewner Evolution and
  techniques developed over the years in percolation, allow precise
  descriptions of the critical and near-critical regimes of the
  model. This survey aims to describe the different steps leading to the
  proof that the infinite-cluster density $\theta(p)$ for site
  percolation on the triangular lattice behaves like
  $(p-p_c)^{5/36+o(1)}$ as $p\searrow p_c=1/2$.
\end{abstract}


\begin{keyword}
\kwd{Site percolation}
\kwd{Critical phenomenon}
\kwd{Conformal invariance}
\end{keyword}


\tableofcontents

\end{frontmatter}

\section{Introduction}

Percolation as a physical model was introduced by Broadbent and
Hammersley in the fifties~\cite{BH57}. For $p\in (0,1)$, \emph{(site)
  percolation} on the triangular lattice $\mathbb T$ is a random
configuration supported on the vertices (or \emph{sites}), each one
being \emph{open} with probability $p$ and \emph{closed} otherwise,
independently of the others. This can also be seen as a random
coloring of the faces of the hexagonal lattice $\mathbb H$ dual to $\mathbb T$.  Denote
the measure on configurations by $\mathbb P_p$. For general background
on percolation, we refer the reader to the books of
Grimmett~\cite{Gri99} and Kesten~\cite{Kes82}.

We will be interested in the connectivity properties of the model. Two
sets of sites $A$ and $B$ of the triangular lattice are
\emph{connected} (which will be denoted by $A\leftrightarrow B$) if
there exists an \emph{open path}, \ie\ a path of neighboring open
sites, starting at $a\in A$ and ending at $b\in B$. If there exists a
\emph{closed path}, \ie\ a path of neighboring closed sites, starting
at $a\in A$ and ending at $b\in B$, we will write
$A\stackrel{*}{\leftrightarrow}B$. If $A=\{a\}$ and $B=\{b\}$, we
simply write $a\leftrightarrow b$. We also write
$a\leftrightarrow\infty$ if $a$ is on an infinite open simple path.  A
\emph{cluster} is a connected component of open sites.

It is classical that there exists $p_c\in(0,1)$ such that for
$p<p_c$, there exists almost surely no infinite cluster,
while for $p>p_c$, there exists almost surely a unique such
cluster. This parameter is called the \emph{critical point}.

\begin{theorem}\label{thm:Kesten}
  The critical point of site-percolation on the triangular lattice
  equals $1/2$.
\end{theorem}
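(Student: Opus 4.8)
The plan is to deduce $p_c=1/2$ from the exact self-duality of site percolation on $\TT$ at $p=1/2$, amplified by Russo--Seymour--Welsh theory and a sharp-threshold argument. First I would exploit the fact that $\TT$ is a triangulation, hence its own matching graph: for a parallelogram-shaped region $R$ with two ``open'' sides and two ``closed'' sides, a planar (matching) argument shows that exactly one of the following occurs---an open path crosses $R$ between its open sides, or a closed path crosses $R$ between its closed sides. Thus, for every $p$,
$$\Pp[\text{open crossing of } R \text{ between its open sides}] + \Pp[\text{closed crossing of } R \text{ between its closed sides}] = 1.$$
Since swapping open and closed sites maps $\Pp$ to $\mathbb P_{1-p}$ and, for a region chosen symmetric under the relevant lattice symmetry, interchanges the two crossing events, at $p=1/2$ both probabilities equal $1/2$. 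This yields a box-crossing estimate at criticality bounded away from $0$ and $1$.

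The second step feeds this estimate into the Russo--Seymour--Welsh theorem, giving that crossing probabilities of rectangles of any fixed aspect ratio stay bounded away from $0$ and $1$, uniformly in the scale; in particular, the probability that a dyadic annulus contains a closed circuit around its center is bounded below by some $c>0$ at every scale. For the lower bound $p_c\ge 1/2$ I would then show $\theta(1/2)=0$: using independence of the configurations in disjoint annuli, a Borel--Cantelli argument produces almost surely infinitely many closed circuits surrounding the origin, so the origin lies in no infinite open cluster, and by the $0$--$1$ law there is almost surely no infinite cluster at $p=1/2$.

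For the upper bound $p_c\le 1/2$ I would establish percolation for every $p>1/2$. Here duality and RSW alone do not suffice, and the key is a sharp-threshold input: writing $f_n(p)$ for the probability of an open crossing of an $n\times n$ box, Russo's formula expresses $f_n'(p)$ through the expected number of pivotal sites, while a threshold theorem of Kahn--Kalai--Linial / Talagrand / Friedgut--Kalai type shows that, because each site has small influence (being pivotal forces a low-probability four-arm configuration), $f_n$ rises from near $0$ to near $1$ across a window of width $o(1)$ about its critical level. Combined with $f_n(1/2)\ge c$, this forces $f_n(p)\to 1$ for each fixed $p>1/2$, and a standard finite-size/renormalization argument then builds an infinite cluster almost surely. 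The main obstacle is precisely this direction: converting the criticality estimate at $p=1/2$ into genuinely supercritical behaviour for $p>1/2$ requires the sharp-threshold machinery, and controlling the influences---equivalently, the decay of the pivotal (four-arm) probability---is the technical heart of the argument.
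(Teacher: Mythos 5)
Your plan is correct and, in its skeleton, is exactly the strategy of the paper (which follows Bollob\'as--Riordan): the lower bound $p_c\ge 1/2$ is obtained precisely as in Corollary~\ref{lower_bound_critical} (self-duality of the triangulation giving crossing probability $1/2$, RSW as in Theorem~\ref{RSW}, then independence of disjoint annuli and Borel--Cantelli), and the upper bound $p_c\le 1/2$ goes through a sharp-threshold argument for crossing probabilities, as in Section~4.1. The one genuine difference is how the sharp threshold is implemented. The paper avoids bounding individual influences altogether: it transfers the crossing event to a torus, where the event (a closed crossing of \emph{some} rectangle) is translation invariant, so that by the BKKKL theorem (Theorem~\ref{maximum_influence}) all sites have the same, hence large, pivotality probability; Russo's formula then yields the threshold, and a square-root trick carries the estimate back to a fixed rectangle. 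You instead invoke a Talagrand-type theorem that needs an \emph{upper} bound on every influence, justified by saying that pivotality forces a four-arm configuration of low probability. Here there is a subtlety you should address: a uniform upper bound on four-arm probabilities at parameters $p\neq 1/2$ is not available at this stage of the theory --- in the paper it is Kesten's Theorem~\ref{no_variation}, which comes much later and itself rests on the correlation length, defined only after $p_c$ is identified. The elementary fix is to observe that a site pivotal for a horizontal open crossing of an $n$-box carries a \emph{closed} arm to distance of order $n$; this is a decreasing event, so for $p\ge 1/2$ its probability is at most its value at $p=1/2$, which is polynomially small by RSW (Corollary~\ref{critical_exponent}). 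With that adjustment your route closes, and the two implementations trade off cleanly: the torus trick needs no off-critical arm estimates but requires the symmetrization, while yours avoids the torus but needs the monotone arm bound. Your final step (renormalization from high crossing probabilities) also differs mildly from the paper's, which instead deduces exponential decay of closed crossings (Proposition~\ref{exponential_decay}) and applies Borel--Cantelli to closed circuits surrounding the origin; both closings are standard.
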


A similar theorem was first proved in the case of bond percolation on the
square lattice by Kesten in \cite{Kes80}.


Once the critical point has been determined, it is natural to study
the \emph{phase transition} of the model, \emph{i.e.} its behavior for
$p$ near $p_c$. Physicists are interested in the
thermodynamical properties of the model, such as the \emph{infinite
  cluster density}
$$\theta(p):=\Pp(0\leftrightarrow \infty)\text{ when
  $p>p_c$,}$$ the
\emph{susceptibility} (or \emph{mean
  cluster-size}) $$\chi(p):=\sum_{x\in \mathbb T}\Pp(0\leftrightarrow
x)\text{ when $p<p_c$,}$$ and the \emph{correlation length}
$L_p$ (see Definition~\ref{def:correlation_length}). The behavior of
these quantities near $p_c$ is believed to be governed by power laws:
\[\begin{array}{r@{\;}c@{\;}ll}
  \theta(p) &=& (p-p_c)^{\beta+o(1)} & \text{as }p\searrow p_c, \\
  \chi(p)    &=& (p-p_c)^{-\gamma+o(1)} & \text{as }p\nearrow p_c, \\
  L_p       &=& (p-p_c)^{-\nu+o(1)}  & \text{as }p\nearrow p_c. \\
\end{array}\]

These \emph{critical exponents} $\beta$, $\gamma$ and $\nu$ (and others) are
not independent of each other but satisfy certain equations called
\emph{scaling relations}. Kesten's scaling relations relate $\beta$, $\gamma$ and $\nu$
to the so-called monochromatic one-arm and polychromatic four-arm exponents at
criticality. The important feature of these relations is that they relate
quantities defined away from criticality to fractal properties of the critical
regime. In other words, the behavior of percolation through its phase
transition (as $p$ varies from slightly below to slightly above $p_c$) is
intimately related to its behavior at $p_c$. The scaling relations enable
mathematicians to focus on the critical phase. If the connectivity properties
of the critical phase can be understood, then critical exponents for $\theta$,
$\chi$, $L$ will follow.

We now turn to the study of planar percolation at $p=p_c$ and briefly
recall the history of the subject.  In the seminal
papers~\cite{BPZ84b} and~\cite{BPZ84a}, Belavin, Polyakov and
Zamolodchikov postulated \emph{conformal invariance} (under all
conformal transformations of sub-regions) in the scaling limit of
critical two-dimensional statistical mechanics models, of which
percolation at $p_c$ is one. The renormalization group formalism
suggests that the scaling limit of critical models is a fixed point
for the renormalization transformation. The fixed point being unique,
the scaling limit should be invariant under translation, rotation and
scaling. Since it can be described by quantum local fields, it is
natural to expect that the field describing the scaling limit of the
critical regime is itself invariant under all transformations which
are locally compositions of translations, rotations and
scalings. These transformations are exactly the conformal maps.

From a mathematical perspective, the notion of conformal invariance of
an entire model is ill-posed, since the meaning of scaling limit
depends on the object we wish to study (interfaces, size of clusters,
crossings, etc). A mathematical setting for studying scaling limits of
interfaces has been developed, therefore we will focus on this aspect
in this document.

Let us start with the study of a single curve. Fix a simply connected
planar domain $(\Omega,a,b)$ with two points on the boundary and
consider discretizations $(\Omega_\delta,a_\delta,b_\delta)$ of
$(\Omega,a,b)$ by a triangular lattice of mesh size $\delta$. The
clockwise boundary arc of $\Omega_\delta$ from $a_\delta$ to
$b_\delta$ is called $a_\delta b_\delta$, and the one from $b_\delta$
to $a_\delta$ is called $b_\delta a_\delta$. Assume now that the sites
of $a_\delta b_\delta$ are open and that those of $b_\delta a_\delta$
are closed. There exists a unique interface consisting of bonds of the
dual hexagonal lattice, between the open cluster of $a_\delta
b_\delta$ and the closed cluster of $b_\delta a_\delta$ (in order to
see this, the correspondence between face percolation on the hexagonal
lattice and site percolation on the triangular one is useful). We call
this interface the \emph{exploration path} and denote it by
$\gamma_\delta$; see the figure on the first page.

Conformal field theory leads to the prediction that $\gamma_\delta$ converges as
$\delta\rightarrow 0$ to a random, continuous, non-self-crossing curve
from $a$ to $b$ staying in $\Omega$, and which
is expected to be conformally invariant in the following sense.
\begin{definition}
  A family of random non-self-crossing continuous curves
  $\gamma_{(\Omega,a,b)}$, going from $a$ to $b$ and contained in $\Omega$,  indexed by simply connected
  domains with two marked points on the boundary $(\Omega,a,b)$ is
  \emph{conformally invariant} if for any $(\Omega,a,b)$ and any
  conformal map $\psi:\Omega\rightarrow \mathbb C$, $$\psi
  (\gamma_{(\Omega,a,b)})~\text{has the same law
    as}~\gamma_{(\psi(\Omega),\psi(a),\psi(b))}.$$
\end{definition}

In 1999, Schramm proposed a natural candidate for such conformally
invariant families of curves. He noticed that the interfaces of various models
satisfy the \emph{domain Markov property} (see
Section~\ref{sec:convergence_SLE}) which, together with the assumption
of conformal invariance, determines a one-parameter family of such
curves. In \cite{Sch00}, he introduced the Stochastic Loewner
evolution ($\SLE$ for short) which is now known as the
Schramm--Loewner evolution. For $\kappa>0$, a domain $\Omega$ and two
points $a$ and $b$ on its boundary, $\SLE(\kappa)$ is the random Loewner evolution in
$\Omega$ from $a$ to $b$ with driving process $\sqrt \kappa B_t$,
where $(B_t)$ is a standard Brownian motion. We refer to \cite{Wer09}
for a formal definition of $\SLE$. By construction, this process is
conformally invariant, random and fractal.  The prediction of
conformal field theory then translates into the following prediction
for percolation: the limit of $(\gamma_\delta)_{\delta>0}$ in
$(\Omega,a,b)$ is $\SLE(6)$.

For completeness, let us mention that when considering not only a
single curve but multiple interfaces, families of interfaces in a
percolation model are also expected to converge in the scaling limit
to a conformally invariant family of non-intersecting loops. Sheffield
and Werner \cite{SW10a, SW10b} introduced a one-parameter family of
probability measures on collections of non-intersecting loops which
are conformally invariant. These processes are called the Conformal
Loop Ensembles $\CLE(\kappa)$ for $\kappa>8/3$. The $\CLE(\kappa)$
process is related to the $\SLE(\kappa)$ in the following manner: the
loops of $\CLE(\kappa)$ are locally similar to $\SLE(\kappa)$.

Even though we now have a mathematical framework for conformal invariance, it
remains an extremely hard task to prove convergence of the interfaces in
$(\Omega_\delta,a_\delta,b_\delta)$ to $\SLE$.
Nevertheless, the observation that properties of interfaces should also be conformally invariant led
Langlands, Poulliot and Saint-Aubin to publish in \cite{LPSA94} numerical values
in agreement with the conformal invariance in the scaling limit of crossing
probabilities in the percolation model. More precisely, consider a Jordan
domain $\Omega$ with four points $A,B,C$ and $D$ on the boundary. The
$5$-tuple $(\Omega,A,B,C,D)$ is called a \emph{topological rectangle}. The
authors checked numerically that the probability $\mathcal C_\delta(\Omega,A,B,C,D)$ of
having a path of adjacent open sites between the boundary arcs $AB$ and $CD$
converges as $\delta$ goes to $0$ towards a limit which is the same for
$(\Omega,A,B,C,D)$ and $(\Omega',A',B',C',D')$ if they are images of each
other by a conformal map. Notice that the existence of such a crossing
property can be expressed in terms of properties of a well-chosen interface,
thus keeping this discussion in the frame proposed earlier.

The paper~\cite{LPSA94}, while only numerical, attracted many
mathematicians to the domain. The authors attribute the conjecture on
conformal invariance of the limit of crossing probabilities to
Aizenman. The same year, Cardy~\cite{Car92} proposed an explicit
formula for the limit. In 2001, Smirnov proved Cardy's formula
rigorously for critical site percolation on the triangular lattice,
hence rigorously providing a concrete example of a conformally
invariant property of the model.
\begin{theorem}[Smirnov \cite{Smi01}] \label{thm:cardy} For any topological rectangle $(\Omega,A,B,C,D)$, the
  probability of the event $\mathcal C_\delta(\Omega,A,B,C,D)$ has a
  limit $f(\Omega,A,B,C,D)$ as $\delta$ goes to $0$.  Furthermore, the
  limit satisfies the following two properties:
  \begin{itemize}
  \item It is equal to $AB/AC$ if $\Omega$ is an equilateral triangle
    with vertices $A$, $C$ and $D$;
  \item It is conformally invariant, in the following sense: if
    $\,\Phi$ is a conformal map from $\Omega$ to another simply
    connected domain $\Phi(\Omega)$, which extends continuously to
    $\partial\Omega$, then $$f(\Omega,A,B,C,D) = f(\Phi(\Omega),
    \Phi(A), \Phi(B) ,\Phi(C) ,\Phi(D)).$$
  \end{itemize}
\end{theorem}

The fact that Cardy's formula takes such a simple form for equilateral
triangles was first observed by Carleson. Notice that the Riemann
mapping theorem along with the second property give the value of $f$
for every conformal rectangle.

\medbreak

A remarkable consequence of this theorem is that, even though Cardy's
formula provides information on crossing probabilities only, it can in
fact be used to prove much more. We will see that it
implies convergence of interfaces to the \emph{trace} of $\SLE(6)$
(see Section~\ref{sec:convergence_SLE}). In other words, conformal
invariance of one well-chosen quantity can be sufficient to prove
conformal invariance of interfaces.

\begin{theorem}[Smirnov, see also \cite{CN07}] \label{thm:SLE} Let
  $\Omega$ be a simply connected domain with two marked points $a$ and
  $b$ on the boundary. Let $\gamma_\delta$ be the exploration path of
  critical percolation as described in the previous paragraphs. Then
  the law of $\gamma_\delta$ converges weakly, as $\delta\rightarrow
  0$, to the law of the trace of $\SLE(6)$ in $(\Omega,a,b)$.
\end{theorem}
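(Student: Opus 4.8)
The plan is to prove convergence in three stages: first establish precompactness (tightness) of the family $(\gamma_\delta)$, then show that every subsequential limit is almost surely a Loewner curve driven by a continuous function $W_t$, and finally identify $W_t$ as $\sqrt{6}\,B_t$ using Cardy's formula (Theorem~\ref{thm:cardy}) as a martingale observable. The whole argument takes place in a fixed reference domain, say the upper half-plane $\mathbb{H}$ with $a\mapsto 0$ and $b\mapsto\infty$, to which we transport $(\Omega,a,b)$ by a conformal map; convergence in a general $(\Omega,a,b)$ then follows from the conformal invariance of the candidate limit.

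For precompactness I would invoke the Aizenman--Burchard criterion, which reduces weak compactness in the space of curves modulo reparametrization to uniform (in $\delta$) upper bounds on the probability that $\gamma_\delta$ crosses a fixed annulus back and forth $k$ times. Such polychromatic multi-arm estimates are controlled at criticality by Russo--Seymour--Welsh theory together with the crossing bounds furnished by Cardy's formula: each additional crossing of an annulus of modulus bounded below costs a multiplicative factor $<1$, giving the required power decay in $k$ and hence subsequential limits supported on continuous curves. To upgrade such a limit $\gamma$ to a genuine Loewner chain, one shows that the hulls $K_t=\gamma[0,t]$ grow continuously in capacity and that the uniformizing maps $g_t\colon\mathbb{H}\setminus K_t\to\mathbb{H}$ admit a continuous driving function $W_t=g_t(\gamma(t))$, ruling out the curve tracing back on itself or pinching the domain degenerately; this again rests on quantitative arm exponents, now of ``three-arm in a half-plane'' type. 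At this stage I also record the two structural properties inherited from the discrete model in the limit: conformal invariance (from Theorem~\ref{thm:cardy}) and the domain Markov property, namely that conditionally on $\gamma[0,t]$ the remainder of $\gamma$ is, up to the map $g_t$, an independent copy of the same process.

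For the identification, fix two further boundary points and let $E$ be the associated crossing event. By the discrete domain Markov property, $\mathbb{P}(E\mid\gamma_\delta[0,t])$ is a martingale; passing to the limit and using the convergence of crossing probabilities in the slit domains (Cardy's formula applied in $\mathbb{H}\setminus K_t$) expresses it as $M_t=\Phi\bigl(W_t,g_t(\cdot)\bigr)$, where $\Phi$ is the explicit conformally invariant Cardy function and the images of the marked points evolve by Loewner's equation. Computing $dM_t$ by It\^o's formula, the finite-variation contributions coming from Loewner's ODE and from a putative drift of $W_t$ must cancel. This forces a second-order ODE on $\Phi$ whose solution is precisely Cardy's hypergeometric function exactly when the coefficient matches that of $\SLE(\kappa)$ with $\kappa=6$, and it simultaneously forces $W_t$ to be a continuous martingale with $\langle W\rangle_t=6t$. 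By L\'evy's characterization $W_t=\sqrt6\,B_t$, so every subsequential limit is $\SLE(6)$; uniqueness of the limit then gives $\gamma_\delta\Rightarrow\SLE(6)$.

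The hard part will be the a priori regularity underlying the first two stages: proving tightness and, above all, that subsequential limits are nondegenerate curves describable by a continuous Loewner driving function. Cardy's formula does the conceptual work of pinning down $\kappa=6$ almost for free, but it only asserts convergence of a single crossing probability in smooth domains; extracting the uniform arm estimates, and crucially controlling the convergence of the observable in the rough slit domains $\mathbb{H}\setminus K_t$ so that the limiting martingale is genuinely Cardy's function of $g_t$, is where the substantive analytic effort lies.
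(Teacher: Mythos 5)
Your overall architecture is exactly the paper's: tightness via the Aizenman--Burchard criterion and RSW-type estimates, upgrading subsequential limits to Loewner traces via arm-event bounds (the paper invokes the Kemppainen--Smirnov unforced-crossing theorem, with an alternative argument based on the six-arm event; your ``three-arm in a half-plane'' estimate is only the boundary part of that argument, the interior part being the six-arm bound), and identification of the driving process through Cardy's formula as a martingale observable plus L\'evy's characterization. One misattribution worth correcting, though it is not a gap: Cardy's formula plays no role in tightness --- the uniform annulus-crossing bounds come from Theorem~\ref{RSW} together with the Reimer inequality, and need no conformal invariance input at all.

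The genuine gap is in your identification step: you propose to compute $dM_t$ for $M_t=\Phi\bigl(W_t,g_t(\cdot)\bigr)$ ``by It\^o's formula'' and cancel the finite-variation terms. At that stage of the argument $W_t$ is only known to be a continuous function; It\^o's formula cannot be applied to a function of $W_t$ until one knows that $W$ is a semimartingale, and that is precisely (a consequence of) what you are trying to prove, so the step as written is circular. The paper avoids this by a purely algebraic device: keep both marked points $z=\phi(Z)$ and $c=\phi(C)$ as free parameters, write the martingale identity
\begin{equation*}
\mathbb E\left[\, h\!\left(\frac{g_t(z)-W_t}{g_t(c)-W_t}\right) \,\middle|\, \mathcal G_s\right]
= h\!\left(\frac{g_s(z)-W_s}{g_s(c)-W_s}\right),
\end{equation*}
let $z,c\to\infty$ with fixed ratio $\lambda=z/c$, and match the coefficients of $1/c$ and $1/c^2$ in the expansion of Cardy's function $h$, whose derivatives are explicit via the Schwarz--Christoffel formula. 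This extracts directly that $\mathbb E[W_t\mid\mathcal G_s]=W_s$ and $\mathbb E[W_t^2\mid\mathcal G_s]=W_s^2+6(t-s)$, with no stochastic calculus; L\'evy's theorem, which requires only continuity of $W$ together with these two martingale properties, then concludes. Your plan becomes correct if you replace the It\^o computation by this conditional-moment expansion (or any equivalent discretized Taylor argument that never presupposes $W$ is a semimartingale); the rest of your outline matches the paper's proof.
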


Here and in later statements, the topology is associated to the distance on
curves in $\Omega$ from $a$ to $b$ defined by $$d(\Gamma,\tilde
\Gamma)=\inf_{\varphi}\sup_{t\ge0}|\Gamma(t)-\tilde \Gamma(\varphi(t))|,$$
where the infimum is over all strictly increasing functions from $\mathbb R_+$
onto itself.

Similarly, one can consider the convergence of the whole family of
discrete interfaces between open and closed clusters. This family
converges to $\CLE(6)$, as was proved in \cite{CN06}, thus providing a
proof of the full conformal invariance of percolation interfaces.

\bigskip

Convergence to $\SLE(6)$ is important for many reasons. Since $\SLE$
itself is very well understood (its fractal properties in particular),
it enables the computation of several critical exponents describing
the critical phase. We will introduce these exponents later in the
survey. For now we state the result informally (see
Theorem~\ref{exponents} or \cite{SW01}):
\begin{itemize}
\item the probability that there exists an open path from the origin
  to the boundary of the box of radius $n$ behaves as $n^{-5/48+o(1)}$ as
  $n$ tends to infinity;
\item the probability that there exist four arms, two open and two
  closed, from the origin to the boundary of the box of size $n$,
  behaves as $n^{-5/4+o(1)}$ as $n$ tends to infinity.
\end{itemize}

Together with Kesten's scaling relations (Theorem~\ref{CD} or
\cite{Kes87}), the previous asymptotics imply the following result,
which is the main focus of this survey:

\begin{theorem}\label{main_theorem}
  For site percolation on the triangular lattice, $p_c=1/2$
  and $$\theta(p) = (p-1/2)^{5/36+o(1)}\qquad\text{as $p\searrow
    1/2$}.$$
\end{theorem}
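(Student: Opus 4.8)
The plan is to combine the two ingredients that the survey has just assembled: the critical arm exponents coming from convergence to $\SLE(6)$ (stated informally before the theorem, precisely as Theorem~\ref{exponents}), and Kesten's scaling relations (Theorem~\ref{CD}). The statement $p_c=1/2$ is simply Theorem~\ref{thm:Kesten}, so the real content is the asymptotic $\theta(p)=(p-1/2)^{5/36+o(1)}$ as $p\searrow 1/2$. First I would recall the precise form of the scaling relation that expresses $\beta$ in terms of the critical arm exponents. Kesten's relations tie the off-critical exponent $\beta$ to the monochromatic one-arm exponent, governing the one-arm probability $\pi_1(n)=n^{-5/48+o(1)}$, together with the correlation length exponent $\nu$, which is itself controlled by the polychromatic four-arm exponent, governing the four-arm probability $\pi_4(n)=n^{-5/4+o(1)}$.

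Next I would make the heuristic behind the scaling relation explicit, since that is where the arm exponents enter. The correlation length $L_p$ is the scale below which critical and near-critical percolation look alike; by Kesten's analysis it satisfies $(p-p_c)\,L_p^2\,\pi_4(L_p)\asymp 1$, so that $L_p=(p-p_c)^{-\nu+o(1)}$ with $\nu$ determined by the four-arm exponent: plugging $\pi_4(n)=n^{-5/4+o(1)}$ into $(p-p_c)L_p^{2-5/4}\asymp 1$ yields $L_p=(p-p_c)^{-4/3+o(1)}$, i.e.\ $\nu=4/3$. For the density, above criticality the infinite cluster looks critical up to scale $L_p$, so $\theta(p)\asymp\pi_1(L_p)$, the one-arm probability evaluated at the correlation length. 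Substituting $\pi_1(n)=n^{-5/48+o(1)}$ and $L_p=(p-p_c)^{-4/3+o(1)}$ gives
\[
\theta(p)=\bigl(L_p\bigr)^{-5/48+o(1)}=(p-p_c)^{(4/3)(5/48)+o(1)}=(p-p_c)^{5/36+o(1)},
\]
which is exactly the claimed exponent $\beta=5/36$.

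The main obstacle is not this bookkeeping but the rigorous justification of the two asymptotic equivalences $\theta(p)\asymp\pi_1(L_p)$ and $(p-p_c)L_p^2\pi_4(L_p)\asymp 1$, together with the stability statement that critical arm probabilities and their near-critical counterparts agree up to constants on all scales below $L_p$. This is precisely the content of Kesten's scaling relations, which I am entitled to invoke as Theorem~\ref{CD}; the delicate points there are quasi-multiplicativity of arm probabilities, Russo--Seymour--Welsh type box-crossing estimates uniform in $p$ near $p_c$, and a differential inequality (à la Russo's formula) relating $\tfrac{d}{dp}\theta$ to the four-arm event that pivotally connects the origin to infinity. Granting Theorem~\ref{CD}, the proof reduces to feeding in the numerical values $5/48$ and $5/4$ of the one-arm and four-arm exponents supplied by Theorem~\ref{exponents}, and the arithmetic above closes the argument.
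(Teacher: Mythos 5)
Your proposal is correct and follows essentially the same route as the paper: the survey likewise obtains Theorem~\ref{main_theorem} by citing Theorem~\ref{thm:Kesten} for $p_c=1/2$ and then combining Theorem~\ref{exponents} (the values $5/48$ and $5/4$ of the one-arm and polychromatic four-arm exponents) with the two relations of Theorem~\ref{CD}, namely $(p-\tfrac12)L_p^2\,\P[A_{1010}(L_p)]\asymp 1$ and $\theta(p)\asymp\P[A_1(L_p)]$, exactly as in your arithmetic. Your identification of where the real work lies (quasi-multiplicativity, near-critical RSW estimates, and the Russo-formula differential inequalities inside the proof of Theorem~\ref{CD}) also matches the paper's presentation.
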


\begin{figure}[ht!]
  \begin{center}
    \includegraphics[width=0.45\textwidth]{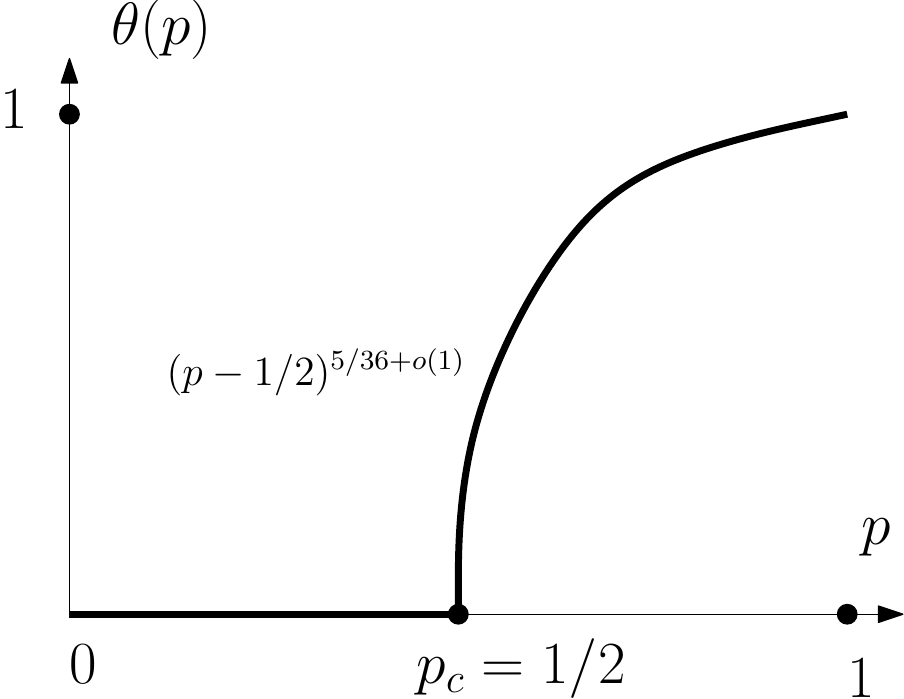}
  \end{center}
  \caption{\label{fig:theta}Cluster density with respect to
    $p$. Non-trivial facts in this picture include $p_c=1/2$,
    $\theta(p_c)=0$ and the behavior of $p\rightarrow
    \theta(p)$ near the critical point.}
\end{figure}

\subsection*{Organization of the survey}

The next section is devoted to the geometry of percolation with
$p=1/2$. First, we obtain uniform bounds for box crossing
probabilities (via a RSW-type argument). Then, we prove the Cardy-Smirnov
formula (Theorem~\ref{thm:cardy}). Finally, we sketch the proof of
convergence to $\SLE(6)$ (Theorem~\ref{thm:SLE}).

The second section deals with critical exponents at criticality. We present the
derivation of \emph{arm-exponents} assuming some basic estimates on
$\SLE$ processes.

The third section studies percolation away from $p=1/2$. We prove that
$p_c=\frac12$ and we introduce the notion of correlation
length for general $p$. Then, we study the properties of percolation
at scales smaller than the correlation length. Finally, we investigate
Kesten's scaling relations and prove Theorem~\ref{main_theorem}.

The last section gathers a few open questions relevant to the topic.

\subsection*{Notation and standard correlation inequalities in percolation}

\paragraph{Lattice, distance and balls} Except if otherwise stated,
$\mathbb T$ will denote the triangular lattice with mesh size $1$,
embedded in the complex plane $\CC$, containing a vertex at the origin
and a vertex at $1$. Complex coordinates will be used frequently to
specify the location of a point.  Let $d_{\mathbb T}(\cdot,\cdot)$ be
the graph distance in $\mathbb T$. Define the ball $\Lambda_n:=\{x\in
\mathbb T:d_{\mathbb T}(x,0)\le n\}$ (balls have hexagonal
shapes). Let $\partial \Lambda_n = \Lambda_n\setminus \Lambda_{n-1}$
be the internal boundary of $\Lambda_n$.

\paragraph{Increasing events} The Harris inequality and the monotonicity of percolation will be used
a few times. We recall these two facts now. An event is called
\emph{increasing} if it is preserved by the addition of open sites,
see Section~2.2 of \cite{Gri99} (a typical example is the existence of
an open path from one set to another). The inequality $p<p'$ implies
that $\Pp(A)\le \mathbb P_{p'}(A)$ for any increasing event
$A$. Moreover, for every $p\in[0,1]$ and $A$, $B$ two increasing
events,
$$\Pp(A\cap B)\ge\Pp(A)\Pp(B)\quad\quad\text{(Harris inequality)}.$$
The Harris inequality is a particular case of the
Fortuin-Kasteleyn-Ginibre inequality \cite{FKG71}.

The van den Berg-Kesten inequality \cite{vdBK85} will also be used
extensively. 
For two increasing events $A$ and $B$, let $A\circ B$ be the event
that $A$ and $B$ occur disjointly, meaning that $\omega\in A\circ B$
if and only if there exist a set of sites $E$ (possibly depending on
$\omega$) such that any configuration $\omega'$ with
\smash{$\omega'_{|E}=\omega_{|E}$} is in $A$ and any configuration
$\omega''$ with $\omega''_{|\mathbb T\setminus E}=\omega_{|\mathbb
  T\setminus E}$ is in $B$. In words, the state of sites in $E$ is
sufficient to verify whether $\omega$ is in $A$ or not, and similarly
for $\mathbb T\setminus E$ for $B$. For instance, the event
$\{a\leftrightarrow b\}\circ\{c\leftrightarrow d\}$, for $a,b,c,d$
four disjoint sites is the event that there exist two disjoint paths
connecting $a$ to $b$ and $c$ to $d$ respectively. It is different
from the event $\{a\leftrightarrow b\}\cap\{c\leftrightarrow d\}$
which requires only that there exist two paths connecting $a$ to $b$
and $c$ to $d$, but not necessarily disjoint.

For every $p\in[0,1]$ and $A$, $B$ two increasing events depending {\em on a finite number of sites},
$$\Pp(A\circ B)\le \Pp(A)\Pp(B)\quad\quad\text{(BK inequality)}.$$
This inequality was improved by Reimer \cite{Rei00}, who proved that
the inequality is true for any two (non-necessarily increasing) events
$A$ and $B$ depending on a finite number of sites.

\subsection*{References}

For general background on percolation, we refer the reader to the
books of Grimmett~\cite{Gri99}, Bollob\'as and Riordan \cite{BR} and Kesten~\cite{Kes82}. The proof of
Cardy's formula can be found in the original paper
\cite{Smi01}. Convergence of interfaces to $\SLE$ is proved in
\cite{CN07}. Scaling relations can be found in \cite{Kes87,Nol08}. Lawler's
book \cite{Law05} and Sun's review \cite{Sun11} are good places to get
a general account on $\SLE$. We also refer to original research
articles on the subject. More generally, subjects treated in this
review are very close to those studied in Werner's lecture notes
\cite{Wer09}.

\section{Crossing probabilities and conformal invariance at the
  critical point}

\subsection{Circuits in annuli}

In this whole section, we let $p=1/2$. Let $\mathcal E_n$ be the event
that there exists a circuit (meaning a sequence of neighboring sites $x_1,\dots,x_n,x_1$) of open sites in
$\Lambda_{3n}\setminus \Lambda_n$ that surrounds the origin.
\begin{theorem}\label{RSW}
  There exists $C>0$ such that for every $n>0$, $\P(\mathcal E_n) \ge
  C$.
\end{theorem}

This theorem was first proved in a corresponding form in the case of
bond percolation on the square lattice by Russo \cite{Rus78} and by
Seymour and Welsh \cite{SW78}. It has many applications, several of
which will be discussed in this survey.

Such a bound (and its proof) is not surprising since open and closed
sites play symmetric roles at $p=\frac12$. It is natural to
expect that the probability of $\mathcal E_n$ goes to $0$
(resp.\ $1$) for $p$ below (resp.\ above) $1/2$.

\begin{proof}
  We present one of the many proofs of Theorem~\ref{RSW}, inspired by
  an argument due to Smirnov and presented in \cite{Wer09f} (in
  French).

  \paragraph{Step 1:} Let $n>0$ and index the sides of $\Lambda_n$ as
  in Fig.~\ref{fig:RSWstep2}. Consider the event that $\ell_1$ is
  connected by an open path to $\ell_3\cup \ell_4$ in $\Lambda_n$. The
  triangular lattice being a triangulation, the complement of this
  event is that $\ell_2$ is connected by a closed path to $\ell_5\cup
  \ell_6$ in $\Lambda_n$. Using the symmetry between closed and open
  sites and the invariance of the model under rotations of angle
  $\pi/3$ around the origin, \smash{$\P(\ell_1\leftrightarrow
    \ell_3\cup\ell_4\text{ in }\Lambda_n)$} is equal to $1/2$. Let us
  emphasize that we used that $\mathbb T$ is a triangulation invariant
  under rotations of angle $\pi/3$.

  In fact, we also have that $\smash{\P}(\ell_1\leftrightarrow
  \ell_4\text{ in }\Lambda_n)\ge 1/8$. Indeed, either this is true or,
  going to the complement, $\P(\ell_1\leftrightarrow \ell_3\text{ in
  }\Lambda_n)\ge 1/2-1/8$. But in this case, using the Harris
  inequality, $$\P(\ell_1\leftrightarrow \ell_4\text{ in
  }\Lambda_n)\ge\P(\ell_1\leftrightarrow \ell_3\text{ in
  }\Lambda_n)\P(\ell_2\leftrightarrow \ell_4\text{ in }\Lambda_n)\ge
  (3/8)^2\ge1/8.$$

  \paragraph{Step 2:} Let $i=\sqrt{-1}$. Consider $R_n=\Lambda_n\cup
  (\Lambda_n-\sqrt3n{i})$ and index the sides of $R_n$ as in
  Fig.~\ref{fig:RSWstep2}. For a path $\gamma$ from $\ell_1$ to
  $\ell_4$ in $\Lambda_n$, define the domain $\Omega_\gamma$ to
  consist of the sites of $R_n$ strictly to the right of $\gamma\cup
  \sigma(\gamma)$, where $\sigma$ is the reflection with respect to
  $\ell_1$. Once again, the complement of $\{\ell_4\cup
  \gamma\leftrightarrow\ell_{10}\cup \ell_{11}$ in $\Omega_\gamma\}$
  is \smash{$\{\ell_9\cup\sigma(\gamma)\stackrel{*}{\leftrightarrow}
    \ell_2\cup\ell_3$ in $\Omega_\gamma\}$}. The switching of colors
  and the symmetry with respect to $\ell_1$ imply that the probability
  of the former is at least $1/2$ (it is not equal to $1/2$ since the
  site on $\ell_1$ is necessarily open).

  \begin{figure}[ht!]
    \begin{center}
      \includegraphics[width=0.40\textwidth]{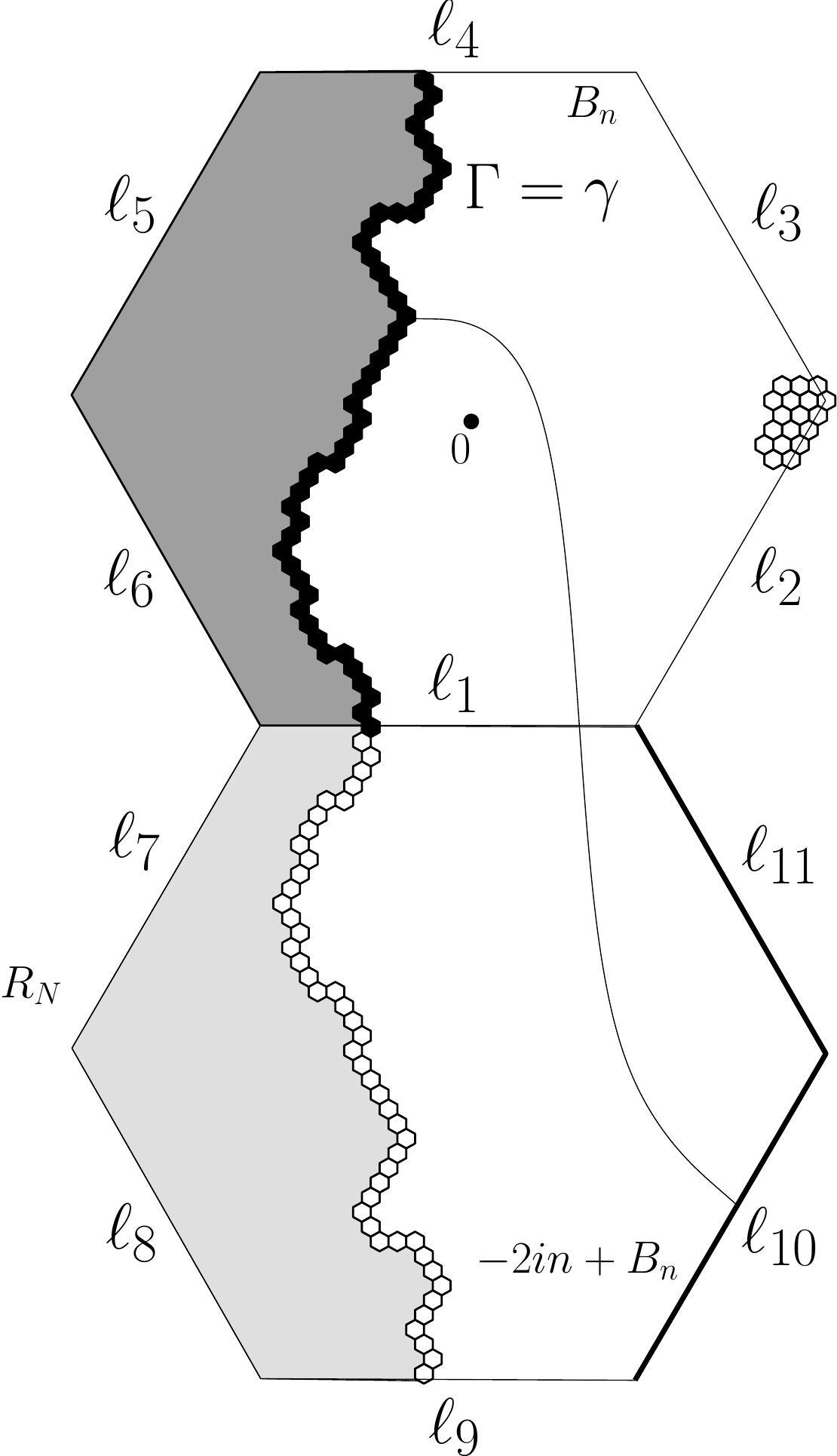}
    \end{center}
    \caption{\label{fig:RSWstep2} The dark gray area is the set of
      sites which are discovered after conditioning on
      $\{\Gamma=\gamma\}$. The white area is $\Omega_\gamma$.}
  \end{figure}

  If $E:=\{\ell_1\leftrightarrow \ell_4\text{ in $\Lambda_n$}\}$ occurs, set
  $\Gamma$ to be the left-most crossing between $\ell_1$ and
  $\ell_4$. For a given path $\gamma$ from $\ell_1$ to $\ell_4$, the
  event $\{\Gamma=\gamma\}$ is measurable only in terms of sites to
  the left or in $\gamma$. In particular, conditioning on
  $\{\Gamma=\gamma\}$, the configuration in $\Omega_\gamma$ is a
  percolation configuration. Thus,
  $$\P\big((\ell_4\cup \gamma)\leftrightarrow (\ell_{10}\cup
  \ell_{11})\text{ in }\Omega_\gamma~|~\Gamma=\gamma\big)~\ge~1/2.$$
  Therefore,
  \begin{align*}
    \P\big(\ell_4\leftrightarrow (\ell_{10}\cup \ell_{11})\text{~in
      $R_n$}\big)~&=~\P\big(\ell_4\leftrightarrow (\ell_{10}\cup
    \ell_{11})\text{~in $R_n$}~,~E\big)\\
    &=~\sum_{\gamma}\P\big(\ell_4\leftrightarrow (\ell_{10}\cup
    \ell_{11})\text{~in $R_n$}~,~\Gamma=\gamma\big)\\
    &\ge~\sum_{\gamma}\P\big((\ell_4\cup \gamma)\leftrightarrow
    (\ell_{10}\cup
    \ell_{11})\text{~in~}\Omega_\gamma~,~\Gamma=\gamma\big)\\
    &\ge~\sum_{\gamma}\frac12\P(\Gamma=\gamma)~=~\frac
    12\P(E)~\ge~\frac1{16}.
  \end{align*}

  \begin{figure}[ht!]
    \begin{center}
      \includegraphics[width=0.40\textwidth]{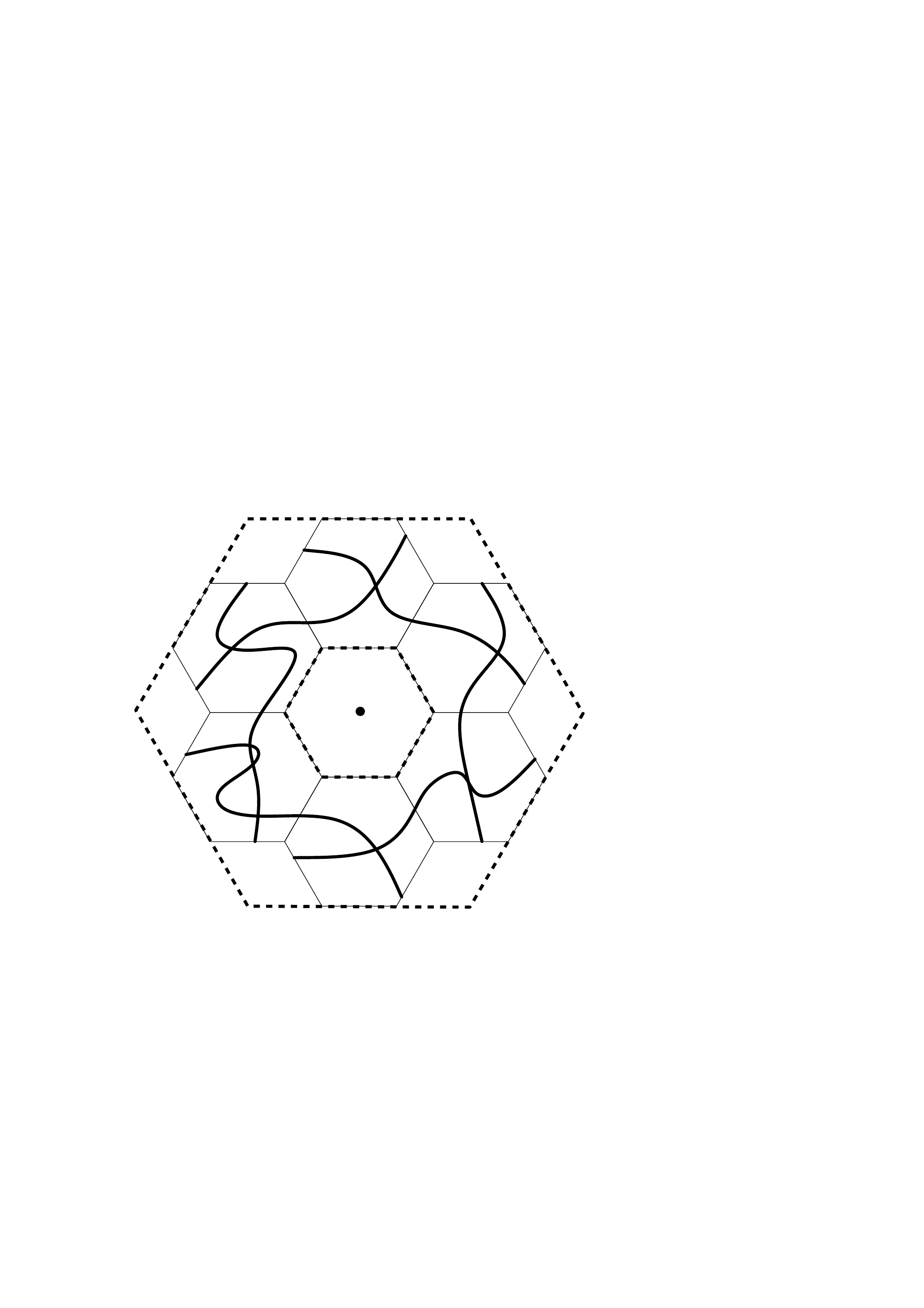}
    \end{center}
    \caption{\label{fig:RSWstep3} Six ``rectangles'' which, when
      crossed, ensure the existence of a circuit in the annulus.}
  \end{figure}

  \paragraph{Step 3:} Invoking the Harris inequality,
  $$\P(\ell_4\leftrightarrow \ell_{9})~\ge~\P\big(\ell_4\leftrightarrow
  (\ell_{10}\cup \ell_{11})\big)\P\big((\ell_2\cup
  \ell_3)\leftrightarrow \ell_{9}\big)~\ge\frac1{16^2}.$$ Assuming
  that the six subdomains of the space (which correspond to translations and rotations of $R_n$) described in Fig.~\ref{fig:RSWstep3} are
  crossed (in the sense that there are open paths between opposite
  short edges), the result follows from a final use of the Harris
  inequality.
\end{proof}

The first corollary of Theorem~\ref{RSW} is the following lower bound
on $p_c$. The result can also be proved without Theorem~\ref{RSW} using an elegant
argument by Zhang which invokes the uniqueness of the infinite cluster
when it exists (see Section~11 of \cite{Gri99}).

\begin{corollary}[Harris \cite{Har60}] \label{lower_bound_critical}
  For site percolation on the triangular lattice, $\theta(1/2)=0$. In
  particular, $p_c\ge 1/2$.
\end{corollary}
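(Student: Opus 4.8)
The plan is to use the color symmetry of the model at $p=\tfrac12$ together with the uniform circuit bound of Theorem~\ref{RSW} to surround the origin by infinitely many \emph{closed} circuits, thereby preventing it from lying in an infinite open cluster. First I would note that, since open and closed sites play symmetric roles under $\P$, Theorem~\ref{RSW} applies verbatim to closed circuits: if $\mathcal E_n^*$ denotes the event that $\Lambda_{3n}\setminus\Lambda_n$ contains a closed circuit surrounding the origin, then $\P(\mathcal E_n^*)=\P(\mathcal E_n)\ge C$ for every $n$, with the same constant $C>0$.

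Next I would make these events independent by choosing scales along which the annuli are disjoint. Setting $n_k:=9^k$, the $k$-th annulus $\Lambda_{3n_k}\setminus\Lambda_{n_k}=\Lambda_{3^{2k+1}}\setminus\Lambda_{3^{2k}}$ consists of sites $x$ with $3^{2k}<d_{\mathbb T}(x,0)\le 3^{2k+1}$, while the $(k{+}1)$-st occupies the range $(3^{2k+2},3^{2k+3}]$; since $3^{2k+1}<3^{2k+2}$, the annuli $(\Lambda_{3n_k}\setminus\Lambda_{n_k})_{k\ge0}$ are pairwise disjoint. Hence the events $\mathcal E_{n_k}^*$ depend on disjoint sets of sites, are therefore independent, and each has probability at least $C>0$. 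By independence (the easy half of the Borel--Cantelli lemma), almost surely infinitely many of the $\mathcal E_{n_k}^*$ occur, so the origin is surrounded by infinitely many disjoint closed circuits.

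Finally I would convert this into $\theta(1/2)=0$ using planarity. Because $\mathbb T$ is a triangulation, a closed circuit surrounding the origin forms a Jordan curve separating $0$ from infinity, and any open path from the origin to infinity must, in order to leave the enclosed region, pass through one of the (closed) vertices of the circuit---which is impossible. Thus a single closed circuit around $0$ already forbids $0\leftrightarrow\infty$; a fortiori, on the almost sure event that infinitely many of the $\mathcal E_{n_k}^*$ occur, we have $0\not\leftrightarrow\infty$. Therefore $\theta(1/2)=\P(0\leftrightarrow\infty)=0$. Since $\theta(p)>0$ for every $p>p_c$ by definition of the critical point, the vanishing of $\theta$ at $\tfrac12$ forces $p_c\ge\tfrac12$.

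The main obstacle I expect is the topological step: carefully justifying, from the triangulation property already stressed in the proof of Theorem~\ref{RSW}, that a closed vertex-circuit genuinely blocks every open path from the enclosed region to infinity. The probabilistic ingredients---transferring the circuit bound to closed sites by symmetry, and gaining independence from disjoint annuli---are routine once the scales $n_k=9^k$ are fixed.
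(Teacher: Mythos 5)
Your proof is correct and takes essentially the same approach as the paper: a uniform RSW circuit bound applied in disjoint concentric annuli at geometric scales, independence across those annuli, color symmetry at $p=\tfrac12$, and the planar fact that a circuit of one color blocks radial paths of the other. The paper merely runs the argument in contrapositive and quantitative form---a closed path from $0$ to $\partial\Lambda_{3^N}$ forces the failure of the open-circuit event in each of the $N$ annuli, yielding the explicit bound $(1-C)^N$ (reused later for Corollary~\ref{critical_exponent})---whereas you construct blocking closed circuits directly via the second Borel--Cantelli lemma (which, as a small quibble, is the \emph{independence-requiring} half, not the ``easy'' one), a purely cosmetic difference.
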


\begin{proof}
  Let us prove that when $p=1/2$, $0$ is almost surely not connected
  by a closed path to infinity (it is the same probability for an open
  path). Let $N>0$. We consider the $N$ concentric disjoint annuli
  $\Lambda_{3^{n+1}}\setminus\Lambda_{3^n}$, for $0\le n<N$, and we
  use that the behavior in each annulus is independent of the behavior
  in the others. Formally, the origin being connected to $\partial
  \Lambda_{3^N}$ by a closed path implies that for every $n< N$, the
  complement, $\mathcal E^c_{3^n}$, of $\mathcal E_{3^n}$
  occurs. Therefore,
  \begin{align}\label{upper_expo}
    \P(0\stackrel{*}{\leftrightarrow} \partial
    \Lambda_{3^N})~&\le~\P\left(\bigcap_{n<N}\mathcal
      E^c_{3^n}\right)~=~\prod_{n<N}\P\left(\mathcal
      E^c_{3^n}\right)~\le~(1-C)^N,
  \end{align}
  where $C$ is the constant in Theorem~\ref{RSW}.  In the second
  inequality, the independence between percolation in different annuli
  is crucial. In particular, the left-hand term converges to $0$ as
  $N\rightarrow \infty$, so that $\theta(1/2)=0$. Hence, by the
  definition of $p_c$, $p_c\ge 1/2$.
\end{proof}

\subsection{Discretization of domains and crossing probabilities}

In general, we are interested in crossing probabilities for general
shapes. Consider a topological rectangle $(\Omega,A,B,C,D)$,
\emph{i.e.} a simply connected domain $\Omega\neq \mathbb C$ delimited
by a non-intersecting continuous curve and four distinct points $A$,
$B$, $C$ and $D$ on its boundary, indexed in counter-clockwise
order. The eager reader might want to check that the argument of this
section still goes through without the assumption that the boundary is
a simple curve, when $A$, $B$, $C$ and $D$ are prime ends of $\Omega$
--- in fact, this extension is needed if one wants to prove
convergence to $\SLE_6$, because the boundary of a stopped $\SLE$ will
typically \emph{not} be a simple curve.

For $\delta>0$, we will be interested in percolation on
$\Omega_\delta:=\Omega\cap\delta\mathbb T$ given by vertices of
$\delta\mathbb T$ in $\Omega$ and edges entirely included in
$\Omega$. Note that the boundary of $\Omega_\delta$ can be seen as a
self-avoiding curve $s$ on $\Omega_\delta^*$ (which is a subgraph of
the hexagonal lattice). Once again, this may not be true if the domain is not smooth, but we choose not to discuss this matter here. The graph $\Omega_\delta$ should be seen as a
discretization of $\Omega$ at scale $\delta$.  Let $A_\delta$,
$B_\delta$, $C_\delta$ and $D_\delta$ be the dual sites in $s$ that
are closest to $A$, $B$, $C$ and $D$ respectively. They divide $s$
into four arcs denoted by $A_\delta B_\delta$, $B_\delta C_\delta$,
etc.

In the percolation setting, let $\mathcal C_\delta(\Omega,A,B,C,D)$ be
the event that there is a path of open sites in $\Omega_\delta$
between the intervals $A_\delta B_\delta$ and $C_\delta D_\delta$ of
its boundary (more precisely connecting two sites of $\Omega_\delta$
adjacent to $A_\delta B_\delta$ and $C_\delta D_\delta$
respectively). We call such a path an \emph{open crossing}, and the
event a \emph{crossing event}; accordingly we will say that the
rectangle is \emph{crossed} if there exists an open crossing.

With a slight abuse of notation, we will denote the percolation
measure with $p=1/2$ on $\delta\mathbb T$ by $\P$ (even though the
measure is the push-forward of $\P$ by the scaling $x\mapsto \delta
x$). We first state a direct consequence of Theorem~\ref{RSW}:

\begin{corollary}[Rough bounds on crossing probabilities]\label{any_shape}
  Let $(\Omega,A,B,C,D)$ be a topological rectangle. There exist
  $0<c_1,c_2<1$ such that for every $\delta>0$,
  \begin{equation*}
    c_1~\le~\P\big[\mathcal C_\delta(\Omega,A,B,C,D)\big]~\le~c_2.
  \end{equation*}
\end{corollary}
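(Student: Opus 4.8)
The plan is to derive both the lower and upper bounds by comparing the crossing event for a general topological rectangle $(\Omega,A,B,C,D)$ to the circuit/crossing events in annuli controlled by Theorem~\ref{RSW}. The key observation is that Theorem~\ref{RSW} yields, via the Harris inequality and standard gluing, a uniform lower bound on the probability of crossing any fixed hexagonal ``rectangle'' of bounded aspect ratio between its short sides, with a constant independent of $\delta$. This is the engine that drives everything; both bounds follow by covering arguments.

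\medbreak

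For the \emph{lower bound}, I would first fix a point $z$ in the interior of $\Omega$ and a radius $r>0$ small enough that $\Lambda(z,r)\subset\Omega$ (using complex coordinates and the Euclidean picture at scale $\delta$). The idea is to chain together finitely many crossings of translated/rotated copies of the rectangle $R_n$ from Step~2 of the proof of Theorem~\ref{RSW}, following a fixed ``tube'' inside $\Omega$ that joins a neighborhood of the arc $A_\delta B_\delta$ to a neighborhood of $C_\delta D_\delta$. Since $\Omega$ is a fixed topological rectangle, one can select a finite chain of such boxes whose successive overlaps each carry a circuit (whose existence has probability at least $C$), so that crossing all the boxes and having the overlap circuits together force an open crossing of $(\Omega,A,B,C,D)$. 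By the Harris inequality, the probability of this prescribed configuration is bounded below by a product of finitely many constants, each at least $C$ (or a power of $C$ from Step~3), giving a $\delta$-independent constant $c_1>0$. The number of boxes in the chain depends only on $\Omega$ and the fixed geometry, not on $\delta$, because the boxes are taken at a scale comparable to $r$ rather than $\delta$; only their internal crossing probabilities are uniform in $\delta$ thanks to Theorem~\ref{RSW}.

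\medbreak

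For the \emph{upper bound}, I would argue by duality: since $\mathbb{T}$ is a triangulation, the \emph{absence} of an open crossing between $A_\delta B_\delta$ and $C_\delta D_\delta$ is equivalent to the existence of a \emph{closed} crossing between the two complementary arcs $B_\delta C_\delta$ and $D_\delta A_\delta$. By the open/closed symmetry at $p=1/2$, this dual crossing event is itself a crossing event of the same type for the ``rotated'' rectangle $(\Omega,B,C,D,A)$, and so its probability is also bounded below by a constant $c_1'>0$ by exactly the lower-bound argument just described. Hence $\P[\mathcal C_\delta(\Omega,A,B,C,D)]\le 1-c_1'=:c_2<1$, uniformly in $\delta$.

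\medbreak

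The main obstacle is the \emph{geometric bookkeeping} in the lower bound: making precise how to cover an arbitrary fixed simply connected $\Omega$, with four marked boundary points, by a finite chain of bounded-aspect-ratio hexagonal boxes so that crossing them (together with overlap circuits) genuinely forces a crossing of the prescribed arcs, and verifying that the combinatorial length of this chain can be chosen independently of $\delta$. Once the covering is set up, the probabilistic input is routine — just finitely many applications of Theorem~\ref{RSW} and the Harris inequality — but one must be careful that the discretization $A_\delta,B_\delta,C_\delta,D_\delta$ of the marked points behaves well so that, for all small $\delta$, the endpoints of the chain land on the correct discrete arcs. This is exactly the point where the paper's own caveat about non-smooth boundaries and prime ends becomes relevant, and where I would invest most of the care.
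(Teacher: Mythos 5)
Your proposal is correct and follows essentially the same route as the paper: the upper bound via open/closed duality at $p=1/2$, reducing it to the lower bound for the rotated rectangle $(\Omega,B,C,D,A)$, and the lower bound by gluing finitely many RSW-controlled events along a fixed chain at a scale independent of $\delta$, combined with the Harris inequality, with the chain length $K(\Omega)$ depending only on the domain. The only cosmetic difference is that the paper's chain consists purely of open circuits in concentric hexagonal annuli $\tilde h_i\setminus h_i$ around adjacent hexagons (so that consecutive circuits intersect automatically and the extreme ones attach to the arcs $AB$ and $CD$), whereas you glue box crossings together with circuits in the overlaps; both variants require exactly the geometric bookkeeping you flag.
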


\begin{figure}[ht!]
  \begin{center}
    \includegraphics[width=0.50\textwidth]{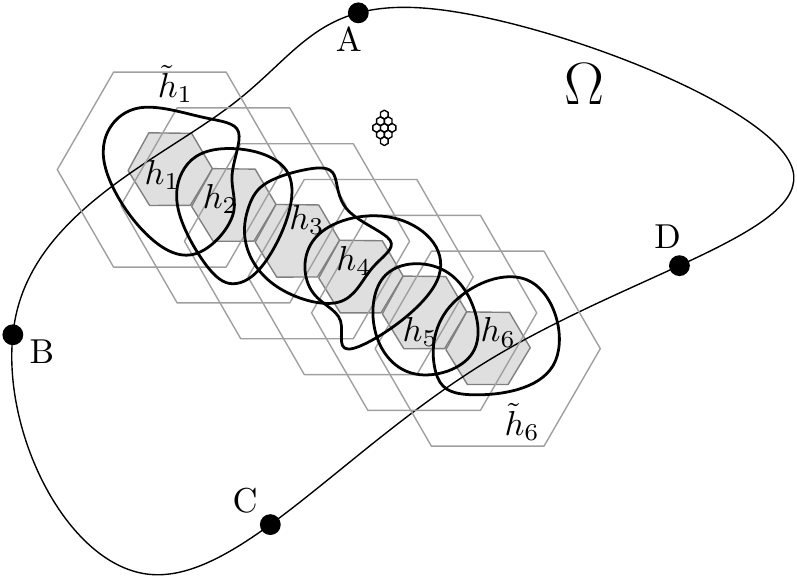}
  \end{center}
  \caption{\label{fig:domain_omega} Circuits in annuli linking two
    arcs of a topological rectangle. If each of these annuli contains
    an open circuit disconnecting the interior from the exterior
    boundary, we obtain an open path connecting the two sides.}
\end{figure}

\begin{proof}
  It is sufficient to prove the lower bound, since the upper bound is
  a consequence of the following fact: the complement of $\mathcal
  C_\delta(\Omega,A,B,C,D)$ is the existence of a closed path from
  $B_\delta C_\delta$ to $D_\delta A_\delta$, it has same probability
  as $\mathcal C_\delta(\Omega,B,C,D,A)$. Therefore, if the latter
  probability is bounded from below, the probability of $\mathcal
  C_\delta(\Omega,A,B,C,D)$ is bounded away from $1$.

  Fix $\varepsilon \in \delta \mathbb N$ positive. For a hexagon $h$
  of radius $\varepsilon>0$, we set $\tilde h$ to be the hexagon with
  the same center and radius $3\varepsilon$. Now, consider a
  collection $h_1$, \ldots, $h_k$ of hexagons ``parallel'' to the
  hexagonal lattice $\mathbb H$ (the dual lattice of $\mathbb T$) and
  of radius $\varepsilon$ satisfying the following conditions:
  \begin{itemize}
  \item $h_1$ intersects $AB$ and $h_k$ intersects $CD$,
  \item $\tilde h_1$, \ldots, $\tilde h_k$ intersect neither $BC$ nor
    $DA$,
  \item $h_i$ are adjacent and the union of hexagons $h_i$ connects
    $AB$ to $CD$ in $\Omega$.
  \end{itemize}
  For any domain and any $\delta>0$ small enough, $\varepsilon>0$ can
  be chosen small enough so that the family $(h_i)$ exists.

  Let $E_i^\delta$ be the event that there is an open circuit in
  $\Omega_\delta\cap(\tilde h_i\setminus h_i)$ surrounding
  $\Omega_\delta\cap h_i$. By construction, if each $E_i^\delta$
  occurs, there is a path from $AB$ to $CD$, see
  Fig.~\ref{fig:domain_omega}. Using Theorem~\ref{RSW}, the
  probability of this is bounded from below by $C^k$, where $C$ does
  not depend on $\varepsilon$ and $\delta$. Now, there exists a constant $K=K(\Omega)$ such that there is a choice of $\ep>0$, $h_1,\dots,h_k$, with $k\le K$ working for any $\delta$ small enough, a fact which implies the
  claim.
\end{proof}

In particular, long rectangles are crossed in the long direction with
probability bounded away from $0$ as $\delta\rightarrow 0$. This
result is the classical formulation of Theorem~\ref{RSW}. We finish
this section with a property of percolation with parameter $1/2$:

\begin{corollary}\label{critical_exponent}
  There exist $\alpha,\beta>0$ such that for every $n>0$,
  $$n^{-\alpha}~\le~\P(0\leftrightarrow \partial
  \Lambda_n)~\le~n^{-\beta}.$$
\end{corollary}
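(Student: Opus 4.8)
The plan is to deduce both bounds from Theorem~\ref{RSW}, the upper bound being essentially a repetition of the argument already used for Corollary~\ref{lower_bound_critical}. Since $n\mapsto\P(0\leftrightarrow\partial\Lambda_n)$ is non-increasing, it suffices to treat the radii $n=3^N$ and then interpolate (for general $n$, pick $N$ with $3^N\le n<3^{N+1}$; this only affects constants). Throughout I would work with the $N$ disjoint concentric annuli $\Lambda_{3^{k+1}}\setminus\Lambda_{3^k}$, $0\le k<N$, on which the behaviors are independent.

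For the upper bound I would first record the color symmetry of $\P$: swapping open and closed sites is a measure-preserving involution at $p=1/2$, so the probability of a \emph{closed} circuit surrounding the origin in $\Lambda_{3^{k+1}}\setminus\Lambda_{3^k}$ equals $\P(\mathcal E_{3^k})\ge C$. Since $\mathbb T$ is a triangulation, a closed circuit surrounding the origin blocks every open path from $0$ to the exterior of that circuit; hence $\{0\leftrightarrow\partial\Lambda_{3^N}\}$ forces the absence of such a closed circuit in each of the $N$ annuli. These obstruction events are independent across the disjoint annuli, so exactly as in \eqref{upper_expo} I obtain $\P(0\leftrightarrow\partial\Lambda_{3^N})\le(1-C)^N$, which yields $\beta=-\log_3(1-C)>0$.

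For the lower bound I would build an open connection scale by scale and glue the pieces with the Harris inequality. In each annulus $\Lambda_{3^{k+1}}\setminus\Lambda_{3^k}$ let $O_k$ be the increasing event that there is an open circuit surrounding the origin; Theorem~\ref{RSW} gives $\P(O_k)\ge C$. For consecutive scales let $T_k$ be the event that $\Lambda_{3^{k+2}}\setminus\Lambda_{3^k}$ is crossed radially by an open path from its inner to its outer boundary; this is a crossing of a region of bounded modulus, whose probability is bounded below uniformly in $k$ by the same RSW technology (Theorem~\ref{RSW}, in the form of Corollary~\ref{any_shape}), say $\P(T_k)\ge c$. The topological point is that a radial crossing of $\Lambda_{3^{k+2}}\setminus\Lambda_{3^k}$ must meet any circuit surrounding the origin at a radius between $3^k$ and $3^{k+2}$, hence it intersects both $O_k$ and $O_{k+1}$ and welds them into a single open cluster. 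Adding one fixed-scale event $T_{-1}$ connecting the origin to the innermost circuit, the simultaneous occurrence of all the $O_k$ and $T_k$ forces $0\leftrightarrow\partial\Lambda_{3^{N-1}}$. As all these events are increasing, the Harris inequality gives $\P(0\leftrightarrow\partial\Lambda_{3^{N-1}})\ge c_0\,(Cc)^{N}$, a bound of the form $n^{-\alpha}$ with $\alpha=-\log_3(Cc)>0$.

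The upper bound is immediate from the earlier reasoning, so the real work is the lower bound. The main obstacle there is the geometric and topological bookkeeping: choosing the scales so that each auxiliary event has probability bounded below by a constant independent of $k$ (this is exactly where the scale-free RSW estimate of Theorem~\ref{RSW} and Corollary~\ref{any_shape} are essential), and verifying that the chosen circuits and radial crossings are forced to intersect, so that their conjunction genuinely produces a single open path from the origin out to $\partial\Lambda_n$. Once that is arranged, FKG converts a product of $O(\log n)$ constant factors into the desired polynomial lower bound.
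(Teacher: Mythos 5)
Your proof is correct, and the upper bound is exactly the paper's argument: color symmetry plus independence across the disjoint annuli $\Lambda_{3^{k+1}}\setminus\Lambda_{3^k}$, as in \eqref{upper_expo}. For the lower bound, however, you take a genuinely different decomposition. The paper chains overlapping rectangles $R_n$ at dyadic scales ($2^n\times 2^{n+1}$, alternating orientation), each crossed in the long direction with probability bounded below by Corollary~\ref{any_shape}; consecutive long crossings must intersect because a ``horizontal'' and a ``vertical'' crossing of a common rectangle necessarily share a site on a triangulation, and Harris then gives $\P(0\leftrightarrow\partial\Lambda_{3^N})\ge C_1^{N+1}$. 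You instead use open circuits $O_k$ in the annuli $\Lambda_{3^{k+1}}\setminus\Lambda_{3^k}$ (directly from Theorem~\ref{RSW}) welded together by radial crossings $T_k$ of $\Lambda_{3^{k+2}}\setminus\Lambda_{3^k}$, the intersection being forced by a Jordan-curve separation argument, plus one fixed-scale event attaching the origin. Both routes are instances of the same skeleton --- gluing $O(\log n)$ constant-probability increasing events by FKG --- so they yield the same bound $n^{-\alpha}$ with $\alpha$ of order $\log(1/(Cc))$. What each buys: the paper's chain needs only one type of event and a slightly simpler intersection argument, making it marginally leaner; your circuits-and-spokes version mirrors the structure of the upper bound (closed circuits block, open circuits plus radial crossings connect), which makes the open/closed duality of the two bounds transparent, at the modest cost of two event types, an extra origin-attachment event, and the geometric bookkeeping you correctly flag (fitting the radial-crossing rectangles relative to the hexagonal balls $\Lambda_m$, which only affects constants).
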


\begin{proof}
  The existence of $\beta>0$ is proved as in \eqref{upper_expo}. For
  the lower bound, we use the following construction. Define

  $$R_n:=\big\{k\cdot 1+\ell\cdot e^{i\pi/3}:k\in[0,2^n]\text{ and }
  \ell\in[0,2^{n+1}]\big\}$$ if $n$ is odd, and
  $$R_n:=\big\{k\cdot 1+\ell\cdot e^{i\pi/3}:k\in[0,2^{n+1}] \text{
    and }\ell\in[0,2^n]\big\}$$ if it is even. Set $F_n$ to be the
  event that $R_n$ is crossed in the ``long''
  direction. Corollary~\ref{any_shape} implies the existence of
  $C_1>0$ such that $\P(F_n)\ge C_1$ for every $n>0$. By the Harris
  inequality $$\P(0\leftrightarrow \partial
  \Lambda_{3^N})~\ge\P\left(\bigcap_{n=0}^NF_n\right)~\ge~\prod_{n=0}^N
  \P(F_n)~\ge~C_1^{N+1}.$$ This yields the existence of $\alpha>0$.
\end{proof}

\subsection{The Cardy--Smirnov formula}
\label{sec:Cardy_s_formula}

The subject of this section is the proof of
Theorem~\ref{thm:cardy}. The proof of this theorem is very well (and
very shortly) exposed in the original paper \cite{Smi01}. It has been
rewritten in a number of places including \cite{BR06c,Gri10,Wer09}. We
provide here a version of the proof which is mainly inspired by
\cite{Smi01} and \cite{Bef07}.

\begin{proof}
  Fix $(\Omega,A,B,C)$ a topological triangle and $z\in \Omega$ (with
  the same caveat as in the previous proof, we will silently assume
  the boundary of $\Omega$ to be smooth and simple, for notation's sake, but the
  same argument applies to the general case of a simply connected
  domain). For $\delta>0$, $A_\delta$, $B_\delta$, $C_\delta$, $z_\delta$ are
  the closest points of $\Omega_\delta^*$ to $A$, $B$, $C$, $z$ respectively, as
  before. Define $E_{A,\delta}(z)$ to be the event that there exists
  a non-self-intersecting path of open sites in $\Omega_\delta$,
  separating $A_\delta$ and $z_\delta$ from $B_\delta$ and
  $C_\delta$. We define $E_{B,\delta}(z)$, $E_{C,\delta}(z)$
  similarly, with obvious circular permutations of the letters.  Let
  $H_{A,\delta}(z)$ (resp.\ $H_{B,\delta}(z)$, $H_{C,\delta}(z)$) be
  the probability of $E_{A,\delta}(z)$ (resp.\ $E_{B,\delta}(z)$,
  $E_{C,\delta}(z)$). The functions $H_{A,\delta}$, $H_{B,\delta}$ and
  $H_{C,\delta}$ are extended to piecewise linear functions on
  $\Omega$.

  \begin{figure}[ht!]
    \begin{center}
      \includegraphics[width=0.50\textwidth]{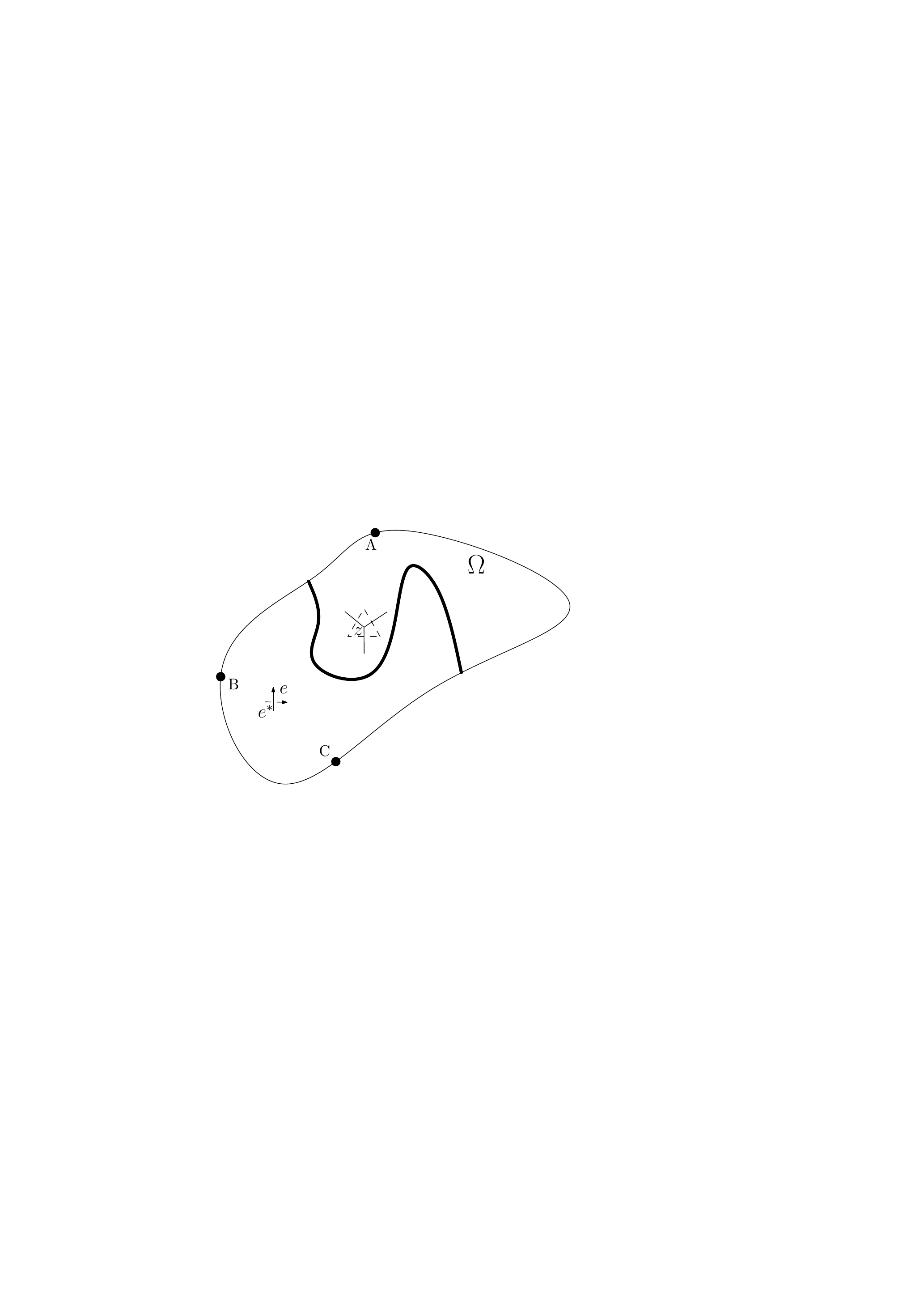}
    \end{center}
    \caption{\label{fig:smirnov1} Picture of the event
      $E_{A,\delta}(z)$. Also depicted is one oriented edge $e$ with
      its associated dual edge $e^*$. The graph $\mathbb T_\delta$ is
      drawn with dotted lines while its dual $\mathbb H_\delta$ is
      drawn with solid lines.}
  \end{figure}

  The proof consists of three steps, the second one being the most
  important:
  \begin{itemize}
  \item[(1)] Prove that $(H_{A,\delta}, H_{B,\delta},
    H_{C,\delta})_{\delta>0}$ is a precompact family of functions
    (with variable $z$).
  \item[(2)] Let $\tau=e^{2i\pi/3}$ and introduce the two
    sequences of functions defined by \begin{align*}H_\delta(z) &:=
      H_{A,\delta}(z)
      + \tau H_{B,\delta}(z) + \tau^2 H_{C,\delta}(z), \\
      S_\delta(z) &:= H_{A,\delta}(z) + H_{B,\delta}(z) +
      H_{C,\delta}(z).\end{align*} Show that any sub-sequential limits
    $h$ and $s$ of $(H_\delta)_{\delta>0}$ and $(S_\delta)_{\delta>0}$
    are holomorphic. This statement is proved using Morera's theorem,
    based on the study of discrete integrals.
  \item[(3)] Use boundary conditions to identify the possible
    sub-sequential limits $h$ and $s$. This guarantees the existence
    of limits for $(H_{A,\delta}, H_{B,\delta},
    H_{C,\delta})_{\delta>0}$. A byproduct of the proof is the exact
    computation of these limits.  \end{itemize} Then, since
  $E_{C,\delta}(D_\delta)$ is exactly the event $\mathcal C_\delta
  (\Omega,A,B,C,D)$, the limit of $H_{C,\delta}(D_\delta)$ as $\delta$
  goes to $0$ is also the limit of crossing probabilities. 

  \paragraph{Precompactness.} We only sketch this part of the proof.
  Let $K$ be a compact subset of $\Omega$. If two points $z,z'\in K$
  are surrounded by a common open (or closed) circuit, then the events
  $E_{A,\delta}(z')$ and $E_{A,\delta}(z)$ are realized
  simultaneously. Hence, the difference
  $|H_{A,\delta}(z')-H_{A,\delta}(z)|$ is bounded above by
  \begin{align*}
    \P[z_\delta\text{~and~} z'_\delta\text{~are not surrounded by a
      common open or a closed circuit}].
  \end{align*} Let $\eta>0$ be the distance between $K$ and
  $\Omega^c$. For $z$ and $z'$, Theorem~\ref{RSW} can be applied in
  roughly $\log (|z-z'|/\eta)/\log 3$ concentric annuli, hence there
  exist two positive constants $C_K$ and $\varepsilon_K$ depending
  only on $K$ such that, for every $\delta>0$,
  \begin{equation}
    \label{eq:mainholder}
    \left| H_{A,\delta}(z') - H_{A,\delta}(z)  \right| \leqslant C_K
    \left| z'-z \right| ^{\varepsilon_K}
  \end{equation}
  and a similar bound for $H_{B,\delta}$ and $H_{C,\delta}$.
  Furthermore, similar estimates can be obtained along the boundary of
  $\Omega$ as long as we are away from $A$, $B$ and $C$.

  Since the functions are extended on the whole domain (see definition
  above), we obtain a family of uniformly Hölder maps from any compact
  subset of $\overline\Omega\setminus\{A,B,C\}$ to $[0,1]$.  By the
  Arzelà-Ascoli theorem, the family is relatively compact with respect
  to uniform convergence.  It is hence possible to extract a
  subsequence $(H_{A,\delta_n}, H_{B,\delta_n},
  H_{C,\delta_n})_{n>0}$, with $\delta_n\to0$, which converges
  uniformly on every compact to a triple of Hölder maps
  $(h_A,h_B,h_C)$ from $\overline\Omega\setminus\{A,B,C\}$ to
  $[0,1]$. From now on, we set $h=h_A+\tau h_B+\tau^2 h_C$ and
  $s=h_A+h_B+h_C$ (they are the limits of $(H_{\delta_n})_{n>0}$ and
  $(S_{\delta_n})_{n>0}$ respectively).

  \paragraph{Holomorphicity of $h$ and $s$.}

  We treat the case of $h$; the case of $s$ follows the same lines. To
  prove that $h$ is holomorphic, one can apply Morera's theorem (see
  \emph{e.g.}~\cite{Lan99}). Formally, one needs to prove that the
  integral of $h$ along $\gamma$ is zero for any simple, closed,
  smooth curve $\gamma$ contained $\Omega$. In order to prove this
  statement, we show that $(H_{\delta_n})_n$ is a sequence of (almost)
  \emph{discrete holomorphic} functions, where one needs to specify
  what is meant by discrete holomorphic. In our case, we take it to
  mean that discrete contour integrals vanish. We refer
  to~\cite{Smi10} for more details on discrete holomorphicity,
  including other definitions of it and its connections to
  statistical physics.

  Consider a simple, closed, smooth curve $\gamma$ contained in
  $\Omega$. For every $\delta>0$, let $\gamma_\delta$ be a
  discretization of $\gamma$ contained in $\Omega_\delta$, \ie\ a
  finite chain $(\gamma_\delta(k))_{0\leqslant k\leqslant N_\delta}$
  of pairwise distinct sites of $\Omega_\delta$, ordered in the
  counter-clockwise direction, such that for every index $k$,
  $\gamma_\delta(k)$ and $\gamma_\delta(k+1)$ are nearest neighbors,
  and chosen in such a way that the Hausdorff distance between
  $\gamma_\delta$ and $\gamma$ goes to $0$ with $\delta$. Notice that
  $N_\delta$ can be taken of order $\delta^{-1}$, which we shall
  assume from now on.

  For an edge $e\in \mathbb H_\delta$, define $e^*$ to be the rotation
  by $\pi/2$ of $e$ around its center (it is an edge of the triangular
  lattice). For an edge $e$ of the hexagonal lattice, let
  $$H_\delta(e)\eqd  \frac  {H_\delta(x)  + H_\delta(y)}  {2},$$
  where $e=xy$ ($x$ and $y$ are the endpoints of the edge $e$).

  An oriented edge $e^*$ of $\mathbb T_\delta$ \emph{belongs} to
  $\gamma_\delta$ if it is of the form $\gamma_\delta(k)
  \gamma_\delta(k+1)$. In such a case, we set $e^*\in\gamma_\delta$.
  Define the discrete integral $I_\gamma^\delta(H)$ of $H_\delta$ (and
  similarly $I_\gamma^\delta(S)$ for $S_\delta$) along $\gamma_\delta$
  by $$I_{\gamma}^{\delta}(H) \eqd \sum _{e^*\in \gamma} e^*
  H_{\delta} (e).$$
In the formula above, $e^*$ is considered as a vector in $\mathbb C$ of length $\delta$.

  Our goal is now to prove that $I_\gamma^\delta(H)$ and
  $I_\gamma^\delta(S)$ converge to $0$ as $\delta$ goes to $0$.
  For every oriented edge $e=xy\in\mathbb H_\delta$,
  set $$P_{A,\delta}(e)~=~\P\big(E_{A,\delta}(y)\setminus
  E_{A,\delta}(x)\big),$$ and similarly $P_{B,\delta}$ and
  $P_{C,\delta}$.

  \begin{lemma}\label{representation_formula}
    For any smooth $\gamma$, as $\delta$ goes to $0$,
    \begin{align}
      \label{eq:discrint4}
      I_{\gamma}^{\delta}(H) &= \sum_{e^*\text{~surrounded
          by~}\gamma_\delta} e^*\left[ P_{A,\delta}(e) + \tau
        P_{B,\delta}(e) +
        \tau^2 P_{C,\delta}(e) \right] + o(1),\\
      I_{\gamma}^{\delta}(S) &= \sum_{e^*\text{~surrounded
          by~}\gamma_\delta} e^* \left[ P_{A,\delta}(e) +
        P_{B,\delta}(e) + P_{C,\delta}(e) \right] + o(1),
    \end{align}
    where the sum runs over oriented edges of $\mathbb T_\delta$
    surrounded by the closed curve $\gamma_\delta$.
  \end{lemma}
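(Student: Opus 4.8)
The plan is to establish \eqref{eq:discrint4} (and its $S$-analogue) as an \emph{exact} discrete Green's formula, up to a boundary term that RSW forces to vanish. Throughout I abbreviate $Q_\delta(e) := P_{A,\delta}(e)+\tau P_{B,\delta}(e)+\tau^2 P_{C,\delta}(e)$ for an oriented edge $e=xy\in\mathbb{H}_\delta$, and I record the elementary identity
$$H_{X,\delta}(y)-H_{X,\delta}(x)=P_{X,\delta}(e)-P_{X,\delta}(-e),\qquad X\in\{A,B,C\},$$
which follows at once from $\P(E\setminus F)-\P(F\setminus E)=\P(E)-\P(F)$ applied to $E=E_{X,\delta}(y)$, $F=E_{X,\delta}(x)$; combining the three colours with weights $1,\tau,\tau^2$ gives $H_\delta(y)-H_\delta(x)=Q_\delta(e)-Q_\delta(-e)$.

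First I would turn the contour sum into a sum over the faces it encloses. Since $\gamma_\delta$ is a simple closed path on $\mathbb{T}_\delta$, it bounds a region $D$ that is a union of triangular faces of $\mathbb{T}_\delta$, and every $\mathbb{T}_\delta$-edge interior to $D$ is traversed once in each direction by the two triangles sharing it, so those contributions cancel and
$$I_\gamma^\delta(H)=\sum_{e^*\in\gamma_\delta}e^*H_\delta(e)=\sum_{T\subset D}\ \sum_{i=1}^3 e_i^*\,H_\delta(e_i).$$
For a single triangle $T$ with centre $w$ (an $\mathbb{H}_\delta$-vertex) and neighbouring centres $w_1,w_2,w_3$, the counterclockwise boundary edges $e_i^*$ are exactly the $\pi/2$-rotations of the outgoing dual edges $e_i=w\to w_i$, and as vectors they satisfy $e_1^*+e_2^*+e_3^*=0$. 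This identity --- the discrete analogue of $\oint \dd z=0$ --- lets me drop the diagonal term, using $H_\delta(e_i)=\tfrac12\big(H_\delta(w)+H_\delta(w_i)\big)$:
$$\sum_{i=1}^3 e_i^* H_\delta(e_i)=\tfrac12\sum_{i=1}^3 e_i^*\big(H_\delta(w_i)-H_\delta(w)\big).$$

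Next I would substitute $H_\delta(w_i)-H_\delta(w)=Q_\delta(e_i)-Q_\delta(-e_i)$ and reorganise edge by edge. Writing the $-Q_\delta(-e_i)$ part as $(-e_i)^*Q_\delta(-e_i)$ (since $(-e_i)^*=-e_i^*$), the face sum at $T$ becomes one half of the sum of $e^*Q_\delta(e)$ over the six oriented dual edges pointing out of and into $w$. Summing over $T\subset D$, every oriented edge both of whose endpoints lie in $D$ --- i.e. every oriented edge surrounded by $\gamma_\delta$ --- is counted once as an ``out'' edge and once as an ``in'' edge, so it receives total weight $e^*Q_\delta(e)$, whereas every oriented edge straddling $\partial D$ is counted only once, with weight $\tfrac12 e^*Q_\delta(e)$. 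This gives the exact identity
$$I_\gamma^\delta(H)=\sum_{e^*\text{ surrounded by }\gamma_\delta}e^*Q_\delta(e)\ +\ \tfrac12\!\!\sum_{e^*\text{ straddling }\partial D}\!\!e^*Q_\delta(e),$$
and the same computation with weights $(1,1,1)$ in place of $(1,\tau,\tau^2)$ yields the statement for $S$.

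The main obstacle is the boundary term, which is where the $o(1)$ and the hypothesis that $\gamma$ is a fixed smooth curve inside $\Omega$ enter. There are of order $\delta^{-1}$ edges straddling $\partial D$, each with $|e^*|=\delta$ and $|Q_\delta(e)|\le P_{A,\delta}(e)+P_{B,\delta}(e)+P_{C,\delta}(e)$, so the term is bounded by a constant (the length of $\gamma$) times $\sup_e\big(P_{A,\delta}(e)+P_{B,\delta}(e)+P_{C,\delta}(e)\big)$ over edges $e$ at distance of order $d(\gamma,\partial\Omega)>0$. Now by its very definition as $E_{X,\delta}(y)\setminus E_{X,\delta}(x)$, the event measured by $P_{X,\delta}(e)$ forces three disjoint arms of prescribed colours to emanate from a neighbourhood of the single edge $e$ and to reach macroscopically far; applying Theorem~\ref{RSW} in the order $\log(1/\delta)$ disjoint annuli separating $e$ from the boundary arcs --- exactly as in the precompactness step and in \eqref{upper_expo} --- shows that $\sup_e P_{X,\delta}(e)\to0$ as $\delta\to0$. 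Hence the boundary term is $o(1)$, proving the lemma. I expect the orientation bookkeeping in the re-summation, together with this uniform three-arm bound on $P_{X,\delta}(e)$, to be the two points demanding the most care.
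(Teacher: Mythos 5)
Your proof is correct and follows essentially the same route as the paper's: an exact discrete Stokes decomposition of the contour sum into the enclosed triangular faces, the identity $H_\delta(y)-H_\delta(x)=Q_\delta(e)-Q_\delta(-e)$ (the paper's \eqref{expression}), a re-indexing so that each oriented edge carries weight $e^*Q_\delta(e)$, and an RSW/annulus estimate showing that the $O(\delta^{-1})$ boundary (straddling) terms, each of size $O(\delta^{1+\varepsilon})$, contribute $o(1)$. The only difference is cosmetic: you center each face sum using $e_1^*+e_2^*+e_3^*=0$, whereas the paper writes the same per-face sum with midpoint factors $i\bigl(\tfrac{u+v}{2}-f\bigr)$ and cancels them between adjacent faces.
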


  \begin{proof}
    We treat the case of $H_\delta$; that of $S_\delta$ is
    similar. For every oriented edge $e=xy$ in $\mathbb H_\delta$,
    define     $$\partial_e H_\delta \eqd H_\delta(y)-H_\delta(x).$$
    If $f$ is a face of $\mathbb T_\delta$, let $\partial f$ be its
    boundary oriented in counter-clockwise order, seen as a set of
    oriented edges.  With these notations, we get the following
    identity:
    \begin{equation}
      \label{eq:discrint2}
      I_{\gamma}^{\delta}(H)~=~\sum_{e^*\in \gamma_\delta} e^*
      H_\delta(e)~=~\sum_{f\text{~surrounded by~}\gamma_\delta} \sum_{e^*\in \partial
        f} e^* H_\delta(e),
    \end{equation}
    where the first sum on the right is over all faces of $\mathbb
    T_\delta$ surrounded by the closed curve $\gamma_\delta$.  Indeed,
    in the last equality, each boundary term is obtained exactly once
    with the correct sign, and each interior term appears twice with
    opposite signs. The sum of $e^*H_\delta(e)$ around $f$ can be
    rewritten in the following fashion: \[ \sum_{e^*\in \partial f}
    e^* H_\delta(e) = \sum_{e^*=uv\in\partial f} i\left( \frac {u +
        v}{2} - f \right) \partial_e H_\delta,\] where $f$ denotes the
    complex coordinate of the center of the face $f$. Putting this
    quantity in the sum~\eqref{eq:discrint2}, the term $\partial_e
    H_\delta=H_\delta(y)-H_\delta(x)$ appears twice for $x,y\in\mathbb H_\delta$
    nearest neighbors bordered by two triangles in $\gamma_\delta$,
    and the factors $i(u+v)/2=i(x+y)/2$ cancel between the two
    occurrences (here $e^*=uv$), leaving only $i$ times the difference
    between the centers of the faces, \ie\ the complex coordinate of
    the edge $e^*$. Therefore,
    \begin{equation}
      \label{eq:discrint3}
      I_{\gamma}^{\delta}(H) = \frac12 \sum_{e^*\subset
        Int(\gamma_\delta)} e^* \partial_e H_\delta + o(1).
    \end{equation}
    In the previous equality, we used the fact that the total
    contribution of the boundary goes to $0$ with $\delta$. Indeed,
    $e^*$ is of order $\delta$, and
    \begin{equation}\label{expression}
      \partial_e H_{\delta}~=~P_{A,\delta} (e) - P_{A,\delta}
      (-e)~+~\tau(P_{B,\delta} (e) - P_{B,\delta}
      (-e))~+~\tau^2(P_{C,\delta} (e) - P_{C,\delta} (-e)),
    \end{equation}
    so that Theorem~\ref{RSW} gives a bound of $\delta^{1+\ep}$ for
    $e^*\partial_eH_\delta$ (one may for instance perform a computation similar to the one used for precompactness). Since there are roughly $\delta^{-1}$
    boundary terms, we obtain that the boundary accounts for at most
    $\delta^\ep$.

    Replacing $\partial H_\delta$ by \eqref{expression} in the
    equation \eqref{eq:discrint3}, and re-indexing the sum to obtain
    each oriented edge in exactly one term, we get the announced
    equality \eqref{eq:discrint4}.
  \end{proof}

  \begin{lemma}[Smirnov~\cite{Smi01}]\label{color_switching}
    For every three edges $e_1,e_2,e_3$ of $\Omega_\delta^*$ emanating
    from the same site, ordered counterclockwise, we have the
    following identities: $$P_{A,\delta}(e_1) = P_{B,\delta}(e_2) =
    P_{C,\delta}(e_3).$$
  \end{lemma}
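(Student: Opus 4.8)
The plan is to express each quantity $P_{A,\delta}(e_j)$ as the probability of a single \emph{three-arm event} localized at the triangular face $T$ whose center is the common endpoint of $e_1,e_2,e_3$, and then to pass from one to the next by a color-switching bijection which is measure-preserving precisely because $p=\tfrac12$.

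First I would unfold the pivotal event. Write $e_1=xy$ oriented away from the center $x$ of $T$, let $w_1,w_2,w_3$ be the vertices of $T$ counterclockwise, and recall that $E_{A,\delta}(z)$ is the event that an open self-avoiding path joins the arc $CA$ to the arc $AB$ with $z$ on the $A$-side (thereby separating the corner $A$ and $z$ from $B,C$). Since $x$ and $y$ are centers of adjacent faces sharing the triangular edge $e_1^*$, the difference event $E_{A,\delta}(y)\setminus E_{A,\delta}(x)$ forces the separating open path to thread exactly through $e_1^*$: its two endpoints $w_2,w_3$ are open and lie on the path, which continues as two disjoint open arms to $CA$ and to $AB$; meanwhile the failure of $E_{A,\delta}(x)$ forces, by planar duality in the triangulation (a closed arm cannot cross a disjoint open path), the opposite vertex $w_1$ to be closed and joined by a closed arm to the opposite arc $BC$. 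Thus $P_{A,\delta}(e_1)$ equals the probability of the three-arm event from $T$ consisting of two disjoint open arms to the two arcs adjacent to $A$ and one closed arm to the opposite arc, the closed vertex being the one opposite $e_1^*$. The same analysis gives the analogous descriptions of $P_{B,\delta}(e_2)$ and $P_{C,\delta}(e_3)$, with the colors of $(w_1,w_2,w_3)$ and the arcs reached by the arms cyclically rotated.

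Comparing the descriptions, $P_{A,\delta}(e_1)$ and $P_{B,\delta}(e_2)$ are realized by configurations that differ only in the colors of the two arms bounding the sector at corner $C$: in the $A$-event the arm to $BC$ is closed and the arm to $CA$ is open, whereas in the $B$-event they are open and closed respectively, the arm to $AB$ being open in both. This suggests the bijection: given a configuration in the $A$-event, flip the colors of every site in the sector $R_C$ delimited by these two arms and containing $C$, boundary arms included. At $p=\tfrac12$ the measure is uniform, so any flip is measure-preserving; the flip turns the closed $BC$-arm into an open one, the open $CA$-arm into a closed one, flips the two incident triangle vertices from $(\text{closed},\text{open})$ to $(\text{open},\text{closed})$, and leaves the $AB$-arm and the two far sectors untouched, producing exactly a configuration in the three-arm event defining $P_{B,\delta}(e_2)$. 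Running the argument cyclically then yields $P_{A,\delta}(e_1)=P_{B,\delta}(e_2)=P_{C,\delta}(e_3)$.

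The delicate point, and the step I expect to be the main obstacle, is to make this configuration-dependent recoloring a genuine measure-preserving involution. One must fix canonical (extremal) choices for the two arms bounding $R_C$, for instance the closed $BC$-arm and the open $CA$-arm closest to $C$, so that $R_C$ is determined by the configuration, and then verify that after the swap these become the extremal arms of the required colors for the target event, so that the inverse map recovers the original. Conditioning on the pair of extremal bounding arms, in the spirit of an exploration argument, exhibits the interior of $R_C$ as an unconstrained percolation configuration, and the open/closed symmetry of $\P$ at $p=\tfrac12$ then delivers measure-preservation; checking that extremality is respected under the swap is the heart of Smirnov's color-switching lemma.
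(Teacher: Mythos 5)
Your overall architecture coincides with the paper's: you unfold the difference events $E_{A,\delta}(y)\setminus E_{A,\delta}(x)$ into three-arm events (two open arms to the arcs adjacent to $A$, one closed arm to the opposite arc, emanating from the three sites around the center), and you pass from the $A$-event to the $B$-event by a color-switching bijection whose configuration-dependence is tamed by a canonical (extremal) choice of arms. This is exactly Smirnov's color-switching trick as the paper presents it, and your bijection itself (flip the two bounding arms together with the sector $R_C$ they cut off around $C$, leaving the $AB$-arm side untouched) is sound; you also correctly isolate invertibility-via-extremality as the crux.

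However, the justification you offer for measure-preservation is backwards, and as stated it would fail. Conditioning on the pair of extremal bounding arms does \emph{not} exhibit the interior of $R_C$ as unconstrained percolation: extremality (``closest to $C$'') is by definition a constraint on precisely that region --- it must contain no open $CA$-arm and no closed $BC$-arm nearer to $C$ --- while the region that is genuinely free given the arms is the one on the \emph{far} side of them, where the $AB$-arm lives and which your map never touches. (Indeed, if you canonicalize the arms via the exploration interface from $C$, as the paper does, the two extremal arms are adjacent to each other across the interface and $R_C$ has no interior sites at all, so your claim becomes vacuous; with the ``innermost arm of each color'' definition, $R_C$ can have nonempty interior, e.g.\ an open chord crossing the $C$-sector from arc to arc, and that interior is then constrained, not free.) What actually saves your bijection is not unconstrainedness but the fact that the extremality constraint is carried by the flip onto exactly the extremality constraint of the target event (no closed $CA$-arm, no open $BC$-arm inside $R_C$), so that on each piece of the partition over $R_C$ your map is a flip of a \emph{fixed} set --- measure-preserving since $\P$ is uniform --- sending that piece bijectively onto the corresponding piece of the $B$-event. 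The paper's proof avoids this bookkeeping altogether: it conditions on the exploration interface $\Gamma$ from $C$ to $x$, whose event $\{\Gamma=\gamma\}$ depends only on the sites adjacent to $\gamma$; conditionally, the unexplored region is honestly i.i.d., so flipping \emph{it} (which turns the open $AB$-arm into a closed one) is measure-preserving, and a final flip of the entire configuration lands on $E_{B,\delta}(z)\setminus E_{B,\delta}(x)$. Each of the two flips is transparently a measure-preserving bijection and no extremality-preservation check is needed. You should either adopt that two-step route or replace your ``unconstrained interior'' claim by the constraint-matching argument sketched above.
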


  \begin{figure}[ht!]
    \begin{center}
      \includegraphics[width=0.50\textwidth]{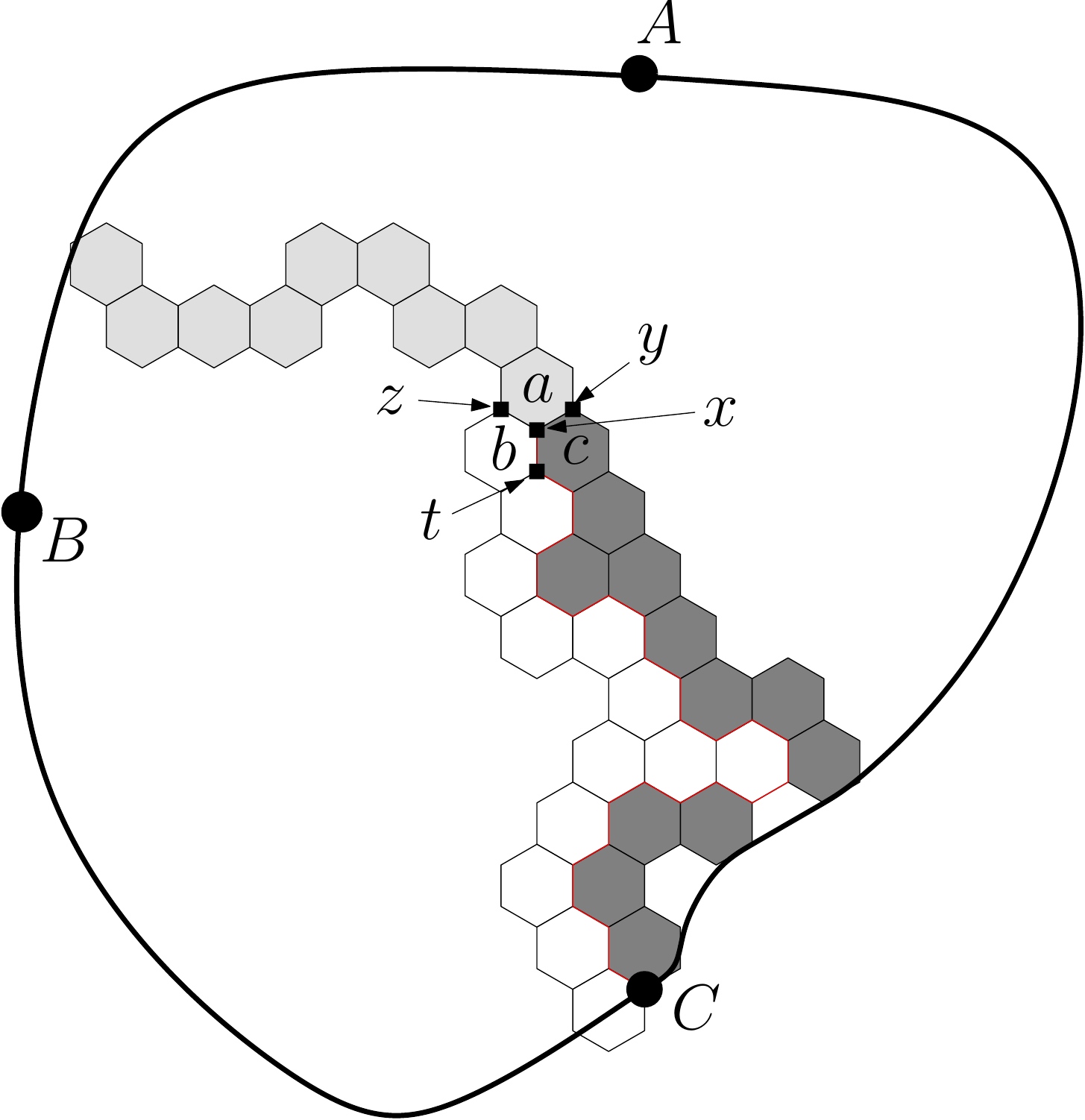}
    \end{center}
    \caption{\label{fig:smirnov2} The dark gray and the white hexagons
      are the hexagons on $\overline{\Gamma}$, $\Gamma$ being in
      black.}
  \end{figure}

  Even though we include the proof for completeness, we refer the
  reader to~\cite{Smi01} for the (elementary, but very clever) first
  derivation of this result. The lemma extends to site-percolation with
  parameter $1/2$ on any planar triangulation.

  \begin{proof}
    Index the three faces (of $\mathbb H_\delta$) around $x$ by $a$,
    $b$ and $c$, and the sites by $y$, $z$ and $t$ as depicted in
    Fig.~\ref{fig:smirnov2}.

    Let us prove that $P_{A,\delta}(e_1) = P_{B,\delta}(e_2)$. The
    event $E_{A,\delta}(y)\setminus E_{A,\delta}(x)$ occurs if and
    only if there are open paths from $AB$ to $a$ and from $AC$ to
    $c$, and a closed path from $BC$ to $b$.

    Consider the interface $\Gamma$ between the open clusters connected
    to $AC$ and the closed clusters connected to $BC$, starting at $C$,
    up to the first time it hits $x$ (it will do it if and only if
    there exist an open path from $AC$ to $c$ and a closed path from
    $BC$ to $b$). Fix a deterministic self-avoiding path of $\Omega^*_\delta$, denoted $\gamma$, from $C$ to
    $x$. The event $\{\Gamma=\gamma\}$ depends only on sites adjacent
    to $\gamma$ (we denote the set of such sites
    $\overline{\gamma}$). Now, on $\{\Gamma=\gamma\}$, there exists a
    bijection between configurations with an open path from $a$ to
    $AB$ and configurations with a closed path from $a$ to $AB$ (by
    symmetry between open and closed sites in the domain
    $\Omega_\delta\setminus \overline{\gamma}$). This is true for any
    $\gamma$ (the fact that the path is required to be self-avoiding is crucial here), hence there is a bijection between the event
    $$E_{A,\delta}(y)\setminus
    E_{A,\delta}(x)=\bigcup_{\gamma}~\{\Gamma=\gamma\} \cap
    \{a\leftrightarrow AB\text{ in }\Omega_\delta\setminus
    \overline{\gamma}\}$$ and
    $$E:=\bigcup_{\gamma}~\{\Gamma=\gamma\} \cap
    \{a\stackrel{*}{\leftrightarrow} AB\text{ in
    }\Omega_\delta\setminus \overline{\gamma}\}.$$ Note that
    $E_{B,\delta}(z)\setminus E_{B,\delta}(x)$ is the image of $E$
    after switching the states of all sites of $\mathbb T_\delta$ (or
    equivalently faces of $\mathbb H_\delta$). Hence, the two events
    are in one-to-one correspondence. Since $\P$ is
    uniform on the set of configurations,
    $$P_{A,\delta}(e_1) =\P(E_{B,\delta}(z)\setminus
    E_{B,\delta}(x))=\P(E)=P_{B,\delta}(e_2).$$ This argument is the
    key step of the lemma, and is sometimes called the
    \textbf{color-switching trick}.
  \end{proof}

  We are now in a position to prove that $I_\gamma^{\delta}(H)$ and
  $I_\gamma^{\delta}(S)$ converge to $0$. From
  Lemmas~\ref{representation_formula} and~\ref{color_switching}, we
  obtain by re-indexing the
  sum $$I_\gamma^{\delta}(H)~=~\sum_{e^*\subset Int(\gamma_\delta)}
  (e^* + \tau(\tau.e)^* + \tau^2(\tau^2.e)^*) P_A(e) + o(1)=o(1),$$
  since
  \begin{equation}\label{eq:phi0}
    e^* + \tau(\tau.e)^* + \tau^2(\tau^2.e)^*=0.
  \end{equation}
  Similarly, for $s$: $$I_\gamma^{\delta}(S)~=~\sum _{e^*
  \subset
    Int(\gamma_\delta)} (e^*+(\tau.e)^*+(\tau^2.e)^*) P_A(e) +
  o(1)=o(1).$$ Here, we have used
  \begin{equation}
    \label{eq:phi1}
    e^*  + (\tau.e)^* +(\tau^2.e)^*=0.
  \end{equation}
  This concludes the proof of the holomorphicity of $h$ and $s$.

  \paragraph{Identification of $s$ and $h$.}

  Let us start with $s$. Since it is holomorphic and real-valued, it
  is constant. It is easy to see from the boundary conditions (near a
  corner for instance) that it is identically equal to 1. Now consider
  $h$. Since $h$ is holomorphic, it is enough to identify boundary
  conditions to specify it uniquely.

  Let $z\in \Omega$. Since $h_A(z)+h_B(z)+h_C(z)=1$, $h(z)$ is a
  barycenter of $1$, $\tau$ and $\tau^2$ hence it is inside the
  triangle with vertices $1$, $\tau$ and $\tau^2$. Furthermore, if $z$
  is on the boundary of $\Omega_\delta^*$, lying between $B$ and $C$,
  $h_A(z)=0$ (using Theorem~\ref{RSW}), thus $h_B(z)+h_C(z)=1$ (since
  $s=1$).  Hence, $h(z)$ lies on the interval $[\tau,\tau^2]$ of the
  complex plane.  Besides, $h(B)=\tau$ and $h(C)=\tau^2$, so $h$
  induces a continuous map from the boundary interval $[BC]$ of
  $\Omega$ onto $[\tau, \tau^2]$.  By Theorem~\ref{RSW} yet again (more precisely Corollary~\ref{any_shape}), $h$
  is one-to-one on this boundary interval (we leave it as an exercise). Similarly, $h$ induces a
  bijection between the boundary interval $[AB]$ (resp.\ $[CA]$) of
  $\Omega$ and the complex interval $[1,\tau]$ (resp.\
  $[\tau^2,1]$). Putting the pieces together we see that $h$ is a
  holomorphic map from $\Omega$ to the triangle with vertices $1$,
  $\tau$ and $\tau^2$, which extends continuously to $\bar \Omega$ and
  induces a continuous bijection between $\partial\Omega$ and the
  boundary of the triangle.

  From standard results of complex analysis (``principle of
  corresponding boundaries'', cf.\ for instance~Theorem~4.3
  in~\cite{Lan99}), this implies that $h$ is actually a conformal map
  from $\Omega$ to the interior of the triangle.  But we know that $h$
  maps $A$ (resp.\ $B$, $C$) to $1$ (resp.\ $\tau$, $\tau^2$). This
  determines $h$ uniquely and concludes the proof of
  Theorem~\ref{thm:cardy}.
\end{proof}


As a corollary of the proof, we get a nice expression for $h_A$: if
$\Phi_{\Omega,A,B,C}$ is the conformal map from $\Omega$ to the
triangle mapping $A$, $B$ and $C$ as previously (which means of course
that $\Phi_{\Omega,A,B,C}=h$) then $$H_{A,\delta}(z) \to \frac {2\Re
  e(\Phi_{\Omega,A,B,C}(z)) + 1}{3}.$$ If $\Omega$ is the equilateral
triangle itself, then $h$ is the identity map and we obtain Cardy's
formula in Carleson's form: if $D\in[CA]$
then $$f(\Omega,A,B,C,D)~=~\frac {|CD|}{|AB|}.$$

It is also to be noted that~\eqref{eq:phi0} actually characterizes the
triangular lattice (and therefore its dual, the hexagonal one), which explains why this proof works only for this lattice.

\subsection{Scaling limit of interfaces}
\label{sec:convergence_SLE}

We now show how Theorem~\ref{thm:cardy} can be used to show
Theorem~\ref{thm:SLE}. We start by recalling several properties of
$\SLE$ processes.

\subsubsection{A crash-course on Schramm--Loewner Evolutions}

In this paragraph, several non-trivial concepts about Loewner chains
are used and we refer to \cite{Law05} and \cite{Sun11} for
details. We briefly recall several useful facts in the next
paragraph.  We do not aim for completeness
(see~\cite{Law05,Wer04,Wer05} for details). We simply introduce
notions needed in the next sections. Recall that a \emph{domain} is a
simply connected open set not equal to $\mathbb C$. We first explain
how a curve between two points on the boundary of a domain can be
encoded via a real function, called the driving process. We then
explain how the procedure can be reversed. Finally, we describe the
Schramm-Loewner Evolution.

\paragraph{From curves in domains to the driving process}

Set $\mathbb H$ to be the upper half-plane. Fix a compact set $K\subset
\overline{\mathbb H}$ such that $H=\mathbb H\setminus K$ is simply connected.
Riemann's mapping theorem guarantees the existence of a conformal map from $H$
onto $\mathbb H$. Moreover, there are \emph{a   priori} three real degrees of
freedom in the choice of the conformal map, so that it is possible to fix its
asymptotic behavior as $z$ goes to $\infty$. Let $g_K$ be the unique conformal
map from $H$ onto $\mathbb H$ such that $$g_K(z)~:=~z + \frac{C}{z} + O\left(
\frac1{z^2} \right).$$ The proof of the existence of this map is not
completely obvious and requires Schwarz's reflection principle. The constant
$C$ is called the \emph{$h$-capacity} of $K$. It acts like a capacity: it is
increasing in $K$ and the $h$-capacity of $\lambda K$ is $\lambda^2$ times the
$h$-capacity of $K$.

There is a natural way to parametrize certain continuous non-self-crossing curves
$\Gamma:\mathbb R_+\rightarrow \overline{\mathbb H}$ with
$\Gamma(0)=0$ and with $\Gamma(s)$ going to $\infty$ when $s\rightarrow
\infty$. For every $s$, let $H_s$ be the connected component of
$\mathbb H\setminus\Gamma[0,s]$ containing $\infty$. We denote by
$K_s$ the \emph{hull created by} $\Gamma[0,s]$, \emph{i.e.} the
compact set $\overline{\mathbb H}\setminus H_s$.
By construction, $K_s$ has a certain $h$-capacity $C_s$. The
continuity of the curve guarantees that $C_s$ grows continuously, so that it
is possible to parametrize the curve via a time-change $s(t)$ in such a way
that $C_{s(t)}=2t$. This parametrization is called the \emph{$h$-capacity
parametrization}; we will assume it to be chosen, and reflect this by using
the letter $t$ for the time parameter from now on. Note that in general, the
previous operation is not a proper reparametrization, since any part of the
curve ``hidden from $\infty$'' will not make the $h$-capacity grow, and thus
will be mapped to the same point for the new curve; it might also be the case
that $t$ does not go to infinity along the curve (\eg\ if $\Gamma$ ``crawls''
along the boundary of the domain), but this is easily ruled out by crossing-type
arguments when working with curves coming from percolation configurations.

The curve can be encoded via the family of conformal maps $g_t$ from $H_t$ to
$\mathbb H$, in such a way that $$g_t(z)~:=~z +
\frac{2t}{z}+O\left(\frac{1}{z^2}\right).$$ Under mild conditions, the
infinitesimal evolution of the family $(g_t)$ implies the existence of a
continuous real valued function $W_t$ such that for every $t$ and $z\in H_t$,
\begin{equation}\label{fg}\partial_t
g_t(z)~=~\frac{2}{g_t(z)-W_t}.\end{equation} The function $W_t$ is called the
\emph{driving function} of $\Gamma$. The typical required hypothesis for $W$ to
be well-defined is the following \emph{Local Growth Condition}: \medbreak \noindent\emph{For any
$t\ge0$ and for any $\ep>0$, there exists $\delta>0$ such that for   any $0
\le s\le t$, the diameter of $g_s(K_{s+\delta}\setminus K_s)$   is smaller
than $\ep$.}
\medbreak
This condition is always satisfied in the case of curves (in general, Loewner chains can be defined for families of growing hulls, see \cite{Law05} for additional details).

\paragraph{From a driving function to curves} It is important to
notice that the procedure of obtaining $W$ form $\gamma$ is reversible under mild assumptions on the
driving function. We restrict our attention to the
upper half-plane.

If a continuous function $(W_t)_{t>0}$ is given, it is possible to
reconstruct $H_t$ as the set of points $z$ for which the differential
equation \eqref{fg} with initial condition $z$ admits a solution
defined on $[0,t]$. We then set $K_t=\overline{\mathbb H}\setminus
H_t$. The family of hulls $(K_t)_{t>0}$ is said to be the {\em Loewner
  Evolution with driving function} $(W_t)_{t>0}$.

So far, we did not refer to any curve in this construction. If there
exists a parametrized curve $(\Gamma_t)_{t>0}$ such that for any
$t>0$, $H_t$ is the connected component of $\mathbb H\setminus
\Gamma[0,t]$ containing $\infty$, the Loewner chain $(K_t)_{t>0}$ is
said to be {\em generated by a curve}. Furthermore, $(\Gamma_t)_{t>0}$
is called the \emph{trace} of $(K_t)_{t>0}$.

A general necessary and sufficient condition for a parametrized
non-self-crossing curve in $(\Omega,a,b)$ to be the time-change of the
trace of a Loewner chain is the following:
\begin{enumerate}
\item[(C1)] Its $h$-capacity is continuous;
\item[(C2)] Its $h$-capacity is strictly increasing;
\item[(C3)] The hull generated by the curve satisfies the Local Growth
  Condition.
\end{enumerate}

\paragraph{The Schramm-Loewner Evolution} We are now in a position to
define Schramm--Loewner Evolutions:

\begin{definition}[$\SLE$ in the upper half-plane]
  The chordal Schramm--Loewner Evolution in $\mathbb H$ with parameter
  $\kappa>0$ is the (random) Loewner chain with driving process
  $W_t:=\sqrt \kappa B_t$, where $B_t$ is a standard Brownian motion.
\end{definition}

Loewner chains in other domains are easily defined via conformal maps:

\begin{definition}[$\SLE$ in a general domain]
  Fix a domain $\Omega$ with two points $a$ and $b$ on the boundary
  and assume it has a nice boundary (for instance a Jordan curve). The
  chordal Schramm--Loewner evolution with parameter $\kappa>0$ in
  $(\Omega,a,b)$ is the image of the Schramm--Loewner evolution in the
  upper half-plane by a conformal map from $(\mathbb H,0,\infty)$ onto
  $(\Omega,a,b)$.
\end{definition}

The scaling properties of Brownian motion ensure that the definition
does not depend on the choice of the conformal map involved;
equivalently, the definition is consistent in the case $\Omega=\mathbb
H$. Defined as such, SLE is a random family of growing hulls, but it
can be shown that the Loewner chain is generated by a curve (see
\cite{RS05} for $\kappa\ne 8$ and \cite{LSW04} for $\kappa=8$).


\paragraph{Markov domain property and SLE} To conclude this section,
let us justify the fact that SLE traces are natural scaling limits for
interfaces of conformally invariant models. In order to explain this
fact, we need the notion of \emph{domain Markov property} for a family
of random curves. Let $(\Gamma_{(\Omega,a,b)})$ be a family of random curves from $a$ to $b$ in $\Omega$, indexed by domains $(\Omega,a,b)$.

\begin{definition}[Domain Markov property]\label{def:markov_property}
  A family of random continuous curves $\Gamma_{(\Omega,a,b)}$
  in simply connected domains is said
  to satisfy the domain Markov property if for every $(\Omega,a,b)$
  and every $t>0$, the law of the curve $\Gamma_{(\Omega,a,b)} [t,\infty)$
  conditionally on $\Gamma_{(\Omega,a,b)} [0,t]$ is the same as the law of
  $\Gamma_{(\Omega_t,\Gamma_t,b)}$, where $\Omega_t$ is the connected
  component of $\Omega\setminus\Gamma_t$ having $b$ on its boundary.
\end{definition}

Discrete interfaces in many models of statistical physics naturally
satisfy this property (which can be seen as a variant of the
Dobrushin-Lanford-Ruelle conditions for Gibbs measures, \cite{Geo88}),
and therefore their scaling limits, provided that they exist, also
should. Schramm proved the following result in~\cite{Sch00}, which in some
way justifies $\SLE$ processes as the only natural candidates for such scaling
limits:

\begin{theorem}[Schramm~\cite{Sch00}]
  Every family of random curves $\Gamma_{(\Omega,a,b)}$ which
  \begin{itemize}
  \item is conformally invariant,
  \item satisfies the domain Markov property, and
  \item satisfies that $\Gamma_{(\mathbb H,0,\infty)}$ is scale
    invariant,
  \end{itemize}
  is the trace of a chordal Schramm--Loewner evolution with parameter
  $\kappa\in[0,\infty)$.
\end{theorem}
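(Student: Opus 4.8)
The plan is to encode the curve $\Gamma := \Gamma_{(\HH,0,\infty)}$ by its driving function and then show that this driving function must be a Brownian motion of the form $\sqrt\kappa B_t$. First I would check that $\Gamma$ admits a Loewner encoding, i.e.\ that conditions (C1)--(C3) hold: continuity and strict monotonicity of the $h$-capacity together with the Local Growth Condition. Since $\Gamma$ is a continuous non-self-crossing curve, continuity is automatic, and the remaining requirements (that the curve does not linger so as to keep the capacity constant, and that it does not crawl along the boundary) are exactly the crossing-type statements alluded to after condition (C3); I would take them as granted in the abstract setting, noting that for percolation curves they follow from Theorem~\ref{RSW}. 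This produces a continuous driving process $(W_t)_{t\ge0}$ with $W_0=0$.

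The central step is to extract from the domain Markov property and conformal invariance that $(W_t)$ has stationary and independent increments. Fix $t>0$ and let $g_t$ be the conformal map from $H_t$ onto $\HH$ normalized at infinity, so that $g_t(\Gamma_t)=W_t$. Applying $g_t$ to the future curve and recentering, set $\tilde\Gamma := g_t(\Gamma[t,\infty)) - W_t$, a curve from $0$ to $\infty$ in $\HH$. The standard composition rule for Loewner chains, $g_{t+s} = \hat g_s\circ g_t$, identifies the driving function of $\tilde\Gamma$ with $(W_{t+s}-W_t)_{s\ge0}$. On the other hand, the domain Markov property says that conditionally on $\Gamma[0,t]$ the future $\Gamma[t,\infty)$ is distributed as $\Gamma_{(H_t,\Gamma_t,\infty)}$; conformal invariance applied to $g_t$, followed by translation invariance (translations being conformal maps of $\HH$ fixing $\infty$), shows that $\tilde\Gamma$ is distributed as $\Gamma$ itself and is independent of $\Gamma[0,t]$. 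Since the $\sigma$-field generated by $\Gamma[0,t]$ contains $\F_t := \sigma(W_u : u\le t)$, we conclude that $(W_{t+s}-W_t)_{s\ge0}$ is independent of $\F_t$ and has the same law as $(W_s)_{s\ge0}$. This is precisely the statement that $W$ has independent and stationary increments.

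It remains to identify such processes. A real-valued process with independent and stationary increments and continuous sample paths has no jump part, so by the Lévy--Khinchin representation it is a Brownian motion with drift: $W_t = \mu t + \sigma B_t$ for some $\mu\in\RR$ and $\sigma\ge0$. Finally I would invoke scale invariance to remove the drift. Scaling space by $\lambda>0$ scales $h$-capacity by $\lambda^2$, so the driving function of $\lambda\Gamma$ is $s\mapsto \lambda W_{s/\lambda^2}$; scale invariance of $\Gamma$ forces $(\lambda W_{t})_t$ and $(W_{\lambda^2 t})_t$ to have the same law. Comparing the drift terms of $\lambda(\mu t + \sigma B_t)$ and $\mu\lambda^2 t + \sigma B_{\lambda^2 t}$ gives $\lambda\mu = \lambda^2\mu$ for every $\lambda>0$, whence $\mu=0$. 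Therefore $W_t = \sigma B_t = \sqrt\kappa B_t$ with $\kappa=\sigma^2\ge0$, so $\Gamma$ is the trace of chordal $\SLE(\kappa)$, and the same conclusion follows in an arbitrary domain by transporting through a conformal map.

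The step I expect to be the main obstacle is the rigorous derivation of stationary independent increments: one must justify the composition identity $W_{t+s}-W_t=\tilde W_s$ at the level of driving functions, verify that the conformal map $g_t$ (which itself depends on $\Gamma[0,t]$) interacts correctly with the conditioning, and confirm that the recentered mapped-forward curve is genuinely independent of the past rather than merely identically distributed. The measurability bookkeeping around the filtration $\F_t$, and the passage from the curve $\sigma$-field to the driving-function $\sigma$-field, likewise require care.
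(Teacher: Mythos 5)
The paper gives no proof of this statement: it quotes the theorem from \cite{Sch00} and follows it only with a remark on the scale-invariance hypothesis, so there is no in-paper argument to compare yours against; it must be judged against Schramm's standard argument, and it reconstructs that argument correctly. All three pillars are sound: (i) the domain Markov property combined with conformal invariance applied to the recentering map $g_t - W_t$ (which is measurable with respect to $\sigma(\Gamma[0,t])$, hence interacts correctly with the conditioning) yields, via the concatenation rule for Loewner chains, that $(W_{t+s}-W_t)_{s\ge 0}$ is independent of $\sigma(\Gamma[0,t])\supset \mathcal F_t$ and has the law of $(W_s)_{s\ge 0}$, and iterating over successive times gives a L\'evy process; (ii) a continuous process with stationary independent increments is $\mu t + \sigma B_t$; (iii) the scaling rule $s\mapsto \lambda W_{s/\lambda^2}$ for the driving function of $\lambda\Gamma$, together with scale invariance, forces $\mu=0$, leaving $W_t=\sqrt\kappa B_t$ with $\kappa=\sigma^2$. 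The caveats you flag at the end are exactly the genuine ones, and the most substantive is the first: for a completely arbitrary family of random continuous curves, conditions (C1)--(C3) (in particular strict growth of the $h$-capacity) do \emph{not} follow from the three listed hypotheses; the theorem has to be read, as the paper implicitly does, with Loewner encodability as part of the hypothesis. Likewise, the paper's Definition~\ref{def:markov_property} states the Markov property in the curve's own parametrization, while you invoke it at $h$-capacity times, which are random times of the original parametrization; this reparametrization/stopping-time bookkeeping is real but routine. Finally, it is instructive to contrast your route with the identification argument the paper does carry out in detail (Proposition~\ref{identification}): there, no symmetry of the subsequential limit is available a priori, so stationary independent increments are out of reach, and the driving process is instead pinned down by an explicit martingale observable (Cardy's formula) combined with L\'evy's characterization of Brownian motion; your proof shows how, once full conformal invariance and the domain Markov property are assumed, abstract L\'evy-process theory replaces the observable.
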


\begin{remark}
  It is formally not necessary to assume scale invariance of the curve
  in the case of the upper-half plane, because it can be seen as a
  particular case of conformal invariance; we keep it nevertheless in
  the previous statement because it is potentially easier, while still
  informative, to prove.
\end{remark}

\subsubsection{Strategy of the proof of Theorem~\ref{thm:SLE}}

In the following paragraphs, we fix a simply-connected domain $\Omega$
with two points $a$ and $b$ on its boundary. We consider percolation
with parameter $p=1/2$ on a discretization $\Omega_\delta$ of $\Omega$
by the rescaled triangular lattice $\delta \mathbb T$. Let $a_\delta$
and $b_\delta$ be two boundary sites of $\Omega_\delta^*$ near $a$ and
$b$ respectively. As explained in the introduction, the boundary of
$\Omega_\delta$ can be divided into two arcs $a_\delta b_\delta$ and
$b_\delta a_\delta$. Assuming that the first arc is composed of open
sites, and the second of closed sites, we obtain a unique interface
defined on $\Omega_\delta^*$ between the open cluster connected to
$a_\delta b_\delta$, and the closed cluster connected to $b_\delta
a_\delta$. This path is denoted by $\gamma_\delta$ and is called the
\emph{exploration path}.

The strategy to prove that $(\gamma_\delta)$ converges to the trace of
$\SLE(6)$ follows three steps:
\begin{itemize}
\item First, prove that the family $(\gamma_\delta)$ of curves is tight.
\item Then, show that any sub-sequential limit can be reparametrized in
  such a way that it becomes the trace of a Loewner evolution with a
  continuous driving process.
\item Finally, show that the only possible driving process for the
  sub-sequential limits is $\sqrt{6}B_t$ where $B_t$ is a standard
  Brownian motion.
\end{itemize}
The main step is the third one. In order to identify Brownian motion
as the only possible driving process for the curve, we find computable
quantities expressed in terms of the limiting curve. In our case,
these quantities will be the limits of certain crossing
probabilities.  The fact that these (explicit) functions are
martingales implies martingale properties of the driving
process. L\'evy's theorem (which states that a continuous real-valued process $X$ such
that both $X_t$ and $X_t^2-6t$ are martingales is necessarily of the form
$\sqrt 6 B_t$) then gives that the driving process must be $\sqrt 6
B_t$.

\subsubsection{Tightness of interfaces}

Recall that the convergence of random parametrized curves (say with
time-parameter in $\mathbb R$) is in the sense of the \textbf{weak
  topology} inherited from the following distance on curves:
\begin{eqnarray}
  d(\Gamma,\tilde\Gamma)~=~\inf_{\phi} \sup_{u\in\mathbb R}
  |\Gamma(u)-\tilde \Gamma(\phi(u))|,
\end{eqnarray}
where the infimum is taken over all reparametrizations (\emph{i.e.}
strictly increasing continuous functions
$\phi\colon\mathbb R\rightarrow\mathbb R$ with $\phi(0)=0$ and $\phi$ tends to infinity as $t$ tends to infinity).

In this section, the following theorem is proved:

\begin{theorem}\label{compactness_interface}
  Fix a domain $(\Omega,a,b)$. The family $(\gamma_\delta)_{\delta>0}$
  of exploration paths for critical percolation in $(\Omega,a,b)$ is
  tight.
\end{theorem}

The question of tightness for curves in the plane has been studied in
the milestone paper \cite{AB99}. In this paper, it is proved that a
sufficient condition for tightness is the absence, on every scale, of
annuli crossed back and forth an arbitrary large number of times.

For $\delta>0$, let $\mu_\delta$ be the law of a random path
$\Gamma_\delta$ on $\Omega_\delta$ from $a_\delta$ to $b_\delta$. For
$x\in \Omega$ and $r<R$, let $\Lambda_r(x)=x+\Lambda_r$ and
$S_{r,R}(x)=\Lambda_R(x)\setminus \Lambda_r(x)$ and define $\mathcal
A_k(x;r,R)$ to be the event that there exist $k$ disjoint sub-paths of
the curve $\Gamma_\delta$ crossing between the outer and inner
boundaries of $S_{r,R}(x)$.

\begin{theorem}[Aizenman-Burchard \cite{AB99}]\label{Aizenman-Burchard}
  Let $\Omega$ be a simply connected domain and let $a$ and $b$ be two
  marked points on its boundary. For $\delta>0$, let $\Gamma_{\delta}$
  denote a random path on $\Omega_\delta$ from $a_\delta$ to
  $b_\delta$ with law $\mu_\delta$.

  If there exist $k\in \mathbb N$, $C_k<\infty$ and $\Delta_k>2$ such
  that for all $\delta<r<R$ and $x\in \Omega$,
  \begin{equation*}
    \mu_{\delta}(\mathcal A_k(x;r,R))\leq C_k
    \Big(\frac{r}{R}\Big)^{\Delta_k},
  \end{equation*}
  then the family of curves $(\Gamma_{\delta})$ is tight.
\end{theorem}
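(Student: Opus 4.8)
The plan is to deduce tightness from a uniform modulus-of-continuity estimate, following the idea of Aizenman and Burchard that the only way a family of planar curves can fail to be precompact is by oscillating on arbitrarily small scales. Concretely, I work in the space of curves modulo reparametrisation equipped with the metric $d$, and use the Arzelà-Ascoli characterisation: a family is relatively compact for $d$ as soon as its members admit parametrisations $[0,1]\to\overline\Omega$ sharing a common modulus of continuity. Since $\mu_\delta$ is supported on curves valued in the fixed compact set $\overline\Omega$, Prokhorov's theorem reduces tightness to producing such a parametrisation with probability tending to $1$ uniformly in $\delta$. I encode this through the quantity $N_\rho(\Gamma)$, the number of consecutive sub-arcs obtained by cutting $\Gamma$ greedily into pieces of diameter $\rho$: a deterministic bound $N_\rho\le C\rho^{-\kappa}$ (for any finite $\kappa$) immediately yields the modulus $\omega(\eta)=(C\eta)^{1/\kappa}$ after reparametrising each piece to equal parameter-length. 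The whole problem thus reduces to controlling $N_\rho$ uniformly in $\delta$.

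The next step is a purely deterministic, geometric reduction isolating a regularity property of the trace that forces $N_\rho\le C\rho^{-\kappa}$. Fix once and for all a large modulus $\Lambda$, and say that $\Gamma$ is \emph{regular} at $(x,r)$ if it has fewer than $k$ disjoint crossings of the annulus $S_{r,\Lambda r}(x)$. The elementary observation is that if $\Gamma$ is regular at every $(x,r)$ with $\delta\le r\le R_0$, then it cannot accumulate too many diameter-$\rho$ pieces inside one ball: a cluster of many such pieces inside $B(x,\Lambda r)$ would force $\ge k$ disjoint excursions from scale $r$ out to scale $\Lambda r$, that is, $\ge k$ crossings of $S_{r,\Lambda r}(x)$. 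Iterating this bound scale by scale --- passing from a ball of radius $\Lambda r$ to its $O(\Lambda^{2})$ sub-balls of radius $r$ --- converts regularity at all scales into the desired power bound on $N_\rho$, with exponent $\kappa$ determined by $\Lambda$ and $k$.

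The probabilistic heart --- and the main obstacle --- is to show that, with probability tending to $1$ uniformly in $\delta$, the curve is regular at every scale down to $\delta$. A flat union bound fails: at fixed modulus $\Lambda$ there are $\asymp r^{-2}$ annuli of inner radius $r$ while each crossing probability is only $\le C_k\Lambda^{-\Delta_k}$, so $\sum_{\text{balls}}\mu_\delta(\mathcal A_k)\asymp r^{-2}\Lambda^{-\Delta_k}$ diverges as $r\to0$. The remedy is a multiscale recursion organised along the dyadic tree of balls that $\Gamma_\delta$ actually visits: only a ball entered by the curve can be bad, so at scale $r$ the bad balls are counted among the $O(\Lambda^{2})$ sub-balls of the balls visited at the parent scale $\Lambda r$, rather than among all $\asymp r^{-2}$ balls of the domain. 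With this nesting the expected number of bad children of a given visited parent is at most
\[
\#\{\text{sub-balls}\}\times\sup_x\,\mu_\delta\big(\mathcal A_k(x;r,\Lambda r)\big)\ \asymp\ \Lambda^{2}\cdot\Lambda^{-\Delta_k}\ =\ \Lambda^{\,2-\Delta_k}.
\]
This is precisely where the hypothesis $\Delta_k>2$ enters: for $\Lambda$ large the branching ratio $\Lambda^{\,2-\Delta_k}$ is strictly less than $1$, so the induced branching is subcritical.

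Consequently the expected total number of bad balls over all scales in $[\delta,R_0]$ is bounded by a convergent geometric series, uniformly in $\delta$, and Markov's inequality makes the probability that any bad ball survives below a threshold $\rho_0$ tend to $0$ as $\rho_0\to0$, uniformly in $\delta$. On the complementary good event the deterministic reduction applies and furnishes the common modulus of continuity, which closes the argument via Prokhorov. The points I expect to fight with are setting up the nesting so that the spatial entropy is paid for at the parent scale rather than absorbed into an unconditional count, propagating the subcriticality down the tree with constants independent of $\delta$, and the geometric bookkeeping behind the deterministic step (disjointness of the excursions, and excursions of intermediate amplitude, which must be caught by re-centering the annuli).
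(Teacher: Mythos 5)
The survey gives no proof of this statement (it is imported wholesale from \cite{AB99} and used as a black box), so your attempt must be measured against the original Aizenman--Burchard argument. Your outer architecture coincides with theirs --- greedy decomposition into diameter-$\rho$ pieces, a uniform polynomial bound $N_\rho\le C\rho^{-\kappa}$ yielding a common modulus of continuity after reparametrization, then Arzel\`a--Ascoli plus Prokhorov --- and your deterministic reduction is sound (granted the re-centering at inner scale $\approx\rho/\Lambda$ that you flag yourself). The genuine gap is the multiscale recursion at the ``probabilistic heart''. The hypothesis bounds only the \emph{unconditional} probabilities $\mu_\delta(\mathcal A_k(x;r,R))$; your branching step needs the conditional bound $\mu_\delta(\text{child bad}\mid\text{parent visited})\le C_k\Lambda^{-\Delta_k}$, and nothing supplies it: $\mu_\delta$ is an arbitrary law on curves, with no Markov or independence structure (the theorem is designed precisely so as not to assume any). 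Worse, the bookkeeping does not help even formally: every bad ball is automatically visited by the curve, so by linearity of expectation $\mathbb E[\#\{\text{bad balls at scale }r\}]=\sum_{\text{all balls}}\mu_\delta\big(\mathcal A_k(\,\cdot\,;r,\Lambda r)\big)\asymp r^{-2}\Lambda^{-\Delta_k}$ --- exactly the flat union bound you correctly identified as divergent; restricting the count to children of visited parents cannot decrease an expectation. And to run a genuine branching bound you would need to control the number of visited balls at the parent scale, but that number is essentially $N_{\Lambda r}$, the very quantity being bounded, and can be as large as order $(\Lambda r)^{-2}$ (nothing in the hypothesis prevents curves from being nearly area-filling down to scale $\delta$); plugging this in reproduces the divergent sum, so the process is not subcritical and the geometric-series claim collapses.

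The missing idea --- and the place where $\Delta_k>2$ actually earns its keep --- is to use the hypothesis with a \emph{scale-dependent} modulus rather than your fixed $\Lambda$: since the bound is assumed for \emph{all} $\delta<r<R$, take annuli $S_{r_j,R_j}(x)$ with $r_j=2^{-j}$ and $R_j=r_j^{\theta}$ for a fixed $\theta<1-2/\Delta_k$ (such a $\theta\in(0,1)$ exists precisely because $\Delta_k>2$). Each annulus then costs at most $C_k\,r_j^{(1-\theta)\Delta_k}$, which beats the $\asymp r_j^{-2}$ entropy of a lattice of centers at spacing $r_j$, and a plain union bound over centers and dyadic scales is summable, $\sum_j 2^{-j[(1-\theta)\Delta_k-2]}<\infty$, uniformly in $\delta$ --- no conditioning, no tree. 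On the complementary good event your deterministic step runs with these annuli: a piece of diameter $R_j$ starting within $r_j$ of a lattice point $x$ contains a crossing of $S_{2r_j,R_j/4}(x)$, distinct (consecutive, hence disjoint) pieces contain disjoint crossings, so fewer than $k$ pieces start near any $x$, whence $N_\rho\le Ck\,\rho^{-2/\theta}$ for all dyadic $\rho$ above the cutoff, and your concluding compactness step finishes the proof unchanged. (Scales below $\delta$ are handled by the lattice structure of the path; that, and the re-centering issue, are minor next to the conditional-probability gap above.)
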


We now show how to exploit this theorem in order to prove
Theorem~\ref{compactness_interface}. The main tool is
Theorem~\ref{RSW}.

\begin{proof}[Proof of Theorem~\ref{compactness_interface}]
  Fix $x\in \Omega$, $\delta<r<R$ and recall that the lattice has mesh
  size $\delta$. Let $k$ be a positive integer to be fixed later. By the Reimer inequality (recall that the Reimer inequality is simply the BK inequality for non-increasing events),
  \begin{equation*}
    \Pp\big(\mathcal A_{k}(x;r,3r)\big)\leq
    \left[\Pp\big(\mathcal A_1(x;r,3r)\big)\right]^k.
  \end{equation*}
  Using Theorem~\ref{RSW}, $\Pp\big(\mathcal A_1(x;r,3r))\leq
  1-\Pp(\mathcal E_{r/\delta})<1-C$, where $\mathcal E_n$ is the event
  that there exists a closed circuit surrounding the annulus in
  $\Lambda_{3n}\setminus\Lambda_n$. Let us fix $k$ large enough so
  that $(1-C)^k<1/27$. The annulus $S_{r,R}(x)$ can be decomposed into
  roughly $\ln_3(R/r)$ annuli of the form $S_{3^\ell
    r,3^{\ell+1}r}(x)$. For this value of $k$,
  \begin{equation}\label{estimate_AB}
    \Pp(\mathcal A_{k}(x;r,R))\leq C\left(\frac rR\right)^3,
  \end{equation}
  for some constant $C>0$. Hence, Theorem~\ref{Aizenman-Burchard}
  implies that the family $(\gamma_{\delta})$ is tight.
\end{proof}

\subsubsection{Sub-sequential limits are traces of Loewner chains}

In the previous paragraph, exploration paths (and therefore their
traces, since they coincide) were shown to be tight. Let us consider a
sub-sequential limit. We would like to show that, properly
reparametrized, the limiting curve is the trace of a Loewner chain.

\begin{theorem}\label{Loewner}
  Any sub-sequential limit of the family $(\gamma_\delta)_{\delta>0}$
  of exploration paths is almost surely the time-change of the trace
  of a Loewner chain.
\end{theorem}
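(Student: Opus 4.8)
The plan is to transport everything to the upper half-plane and verify, for an arbitrary sub-sequential limit, the three conditions (C1)--(C3) that characterize time-changes of Loewner traces. Fix a conformal map $\psi\colon(\Omega,a,b)\to(\mathbb H,0,\infty)$ (using the assumed regularity of $\partial\Omega$ near $a$ and $b$), and let $\gamma$ be the image under $\psi$ of a sub-sequential limit of $(\gamma_\delta)$; by Theorem~\ref{compactness_interface} such limits exist, and $\gamma$ is almost surely a continuous non-self-crossing curve from $0$ to $\infty$ in $\overline{\mathbb H}$. Because each discrete exploration path, being a non-self-crossing interface, always keeps its tip accessible from the arc leading to $b$, the same holds in the limit: $\gamma$ never re-enters a region it has cut off from $\infty$. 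Consequently, for every $t$ the tip $\gamma(t)$ lies on the boundary of the unbounded component $H_t$ of $\mathbb H\setminus\gamma[0,t]$, and $K_t=\overline{\mathbb H}\setminus H_t$ is a well-defined growing family of hulls with $h$-capacity $C_t$.

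First I would treat (C1). The continuity of $t\mapsto C_t$ is a soft consequence of the continuity of $\gamma$: a uniformly small increment $\gamma[t,t+h]$ can enlarge the hull only by a set of small diameter, and the $h$-capacity depends monotonically and continuously on the hull under such perturbations. No probabilistic input is needed here beyond the continuity of the limiting curve, which is built into the topology underlying Theorem~\ref{Aizenman-Burchard}.

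The heart of the argument is (C3), the Local Growth Condition, and this is where I expect the main difficulty to lie. What must be excluded is the formation of a \emph{narrow neck}: a short (in capacity time) increment of $\gamma$ that pinches off a macroscopic region, so that its image $g_s(K_{s+h}\setminus K_s)$ acquires large diameter even though the increment itself is geometrically small. The mechanism ruling this out is precisely the multiple-crossing estimate used for tightness. If such a neck formed at scale $r$ inside an annulus $S_{r,R}(x)$, the curve would be forced to cross the annulus back and forth a large number $k$ of times; by Reimer's inequality together with Theorem~\ref{RSW} this has probability at most $C(r/R)^{\Delta_k}$ with $\Delta_k>2$, exactly as in~\eqref{estimate_AB}. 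Summing over a dyadic family of scales and centers and applying a Borel--Cantelli argument, one concludes that almost surely $\gamma$ admits no such necks at any scale; quantitatively, $\operatorname{diam}\big(g_s(K_{s+h}\setminus K_s)\big)\to0$ uniformly in $s\le t$ as $h\to0$, which is the Local Growth Condition. The passage from the discrete crossing estimate to a statement about the conformal images $g_s(\cdot)$ is the technical crux, since one must control the distortion of the uniformizing maps; the super-quadratic exponent $\Delta_k>2$ is exactly what makes the crossing probabilities summable against the area scaling of the annuli.

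Finally, (C2), the strict monotonicity of $C_t$, is handled by reparametrizing by capacity together with an exclusion of boundary-hugging. Intervals on which the capacity is momentarily constant---corresponding to $\gamma$ travelling along $\partial\mathbb H$ or tracing a set invisible from $\infty$---are collapsed by the time-change. It then remains to check that such intervals cannot fill a set of positive capacity-length, equivalently that $\gamma$ cannot adhere to the real line along a macroscopic arc; this again follows from Theorem~\ref{RSW}, which forces the curve to make macroscopic excursions into $\mathbb H$ between boundary visits on every scale, so that after the time-change $C_t$ becomes genuinely strictly increasing while the reparametrized curve stays continuous. With (C1)--(C3) verified, the characterization quoted before the statement identifies $\gamma$---and hence the original sub-sequential limit, by the conformal covariance of the Loewner description---as the time-change of the trace of a Loewner chain, completing the proof.
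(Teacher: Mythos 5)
Your proposal contains a genuine gap, and it sits exactly where the real difficulty of the theorem lies. The assertion that ``$\gamma$ never re-enters a region it has cut off from $\infty$'' because each discrete interface keeps its tip accessible does not pass to the limit: a uniform limit of non-self-crossing curves can perfectly well enter a fjord, since at the discrete level the fjord is closed only up to a narrow but open neck through which the discrete interface may travel without ever crossing itself (this is precisely the left picture of Fig.~\ref{fig:six_arm_event}). Ruling this out is the whole probabilistic content of the theorem and cannot be obtained from topology. More fatally, your quantitative substitute fails: a neck at scale $r$ inside $S_{r,R}(x)$ does \emph{not} force the curve to cross the annulus ``a large number $k$ of times''; it forces exactly three crossings of the interface, i.e.\ six disjoint percolation arms of alternating colors. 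The Aizenman--Burchard bound~\eqref{estimate_AB} gives an exponent $\Delta_k>2$ only by taking $k$ large, so it says nothing about an event requiring a bounded number of crossings --- this is exactly why tightness alone does not yield the Loewner property. What is needed is that the six-arm event itself has exponent strictly larger than $2$, namely $\P[A_{101010}(n,N)]\le(n/N)^{2+\alpha}$ as in~\eqref{exponents_in}, which comes from the universal five-arm exponent (Theorem~\ref{universal_exponent}) together with Reimer's inequality; your proof never invokes this input, nor any substitute for it.

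There is also a structural misidentification. As noted before the statement, for a continuous curve Condition C1 is automatic and Condition C3 follows from C1 and C2, so the only condition requiring proof is C2. The intervals of constant capacity that you propose to ``collapse'' by a time-change are exactly the fjord traversals: on such an interval the curve moves a macroscopic distance while the capacity stands still, so collapsing produces a jump and the limit would \emph{not} be a time-change of a trace. These intervals must therefore be excluded, not reparametrized away, and your reduction of C2 to ``$\gamma$ cannot adhere to the real line'' misses this main failure mode. The paper's route supplies the missing estimate in one of two ways: either by verifying, along the lines of the proof of Theorem~\ref{compactness_interface} but conditionally on the past of the curve, the unforced-crossing hypothesis of the Kemppainen--Smirnov theorem (Theorem~\ref{thm:KS}) using Theorem~\ref{RSW}; or by the alternative argument based on Corollary~\ref{any_shape} and the six-arm event. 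Without one of these two ingredients the core of the proof is missing.
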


The discrete curves $\gamma_\delta$ are random Loewner chains, but
this does not imply that sub-sequential limits are. Indeed, not every
continuous non-self-crossing curve can be reparametrized as the trace
of a Loewner chain, especially when it is fractal-like and has many
double points. We therefore need to provide an additional ingredient.

Condition C1 of the previous section is easily seen to be automatically satisfied by
continuous curves. Similarly, Condition C3 follows from the two others
when the curve is continuous, so that the only condition to check is
Condition C2.

This condition can be understood as being the fact that the tip of the
curve is visible from $b$ at every time. In other words, the family of
hulls created by the curve is strictly increasing. This is the case if
the curve does not enter long fjords created by its past at every
scale, see Fig.~\ref{fig:six_arm_event}.

\begin{figure}[ht!]
  \begin{center}
    \includegraphics[width=0.35\textwidth]{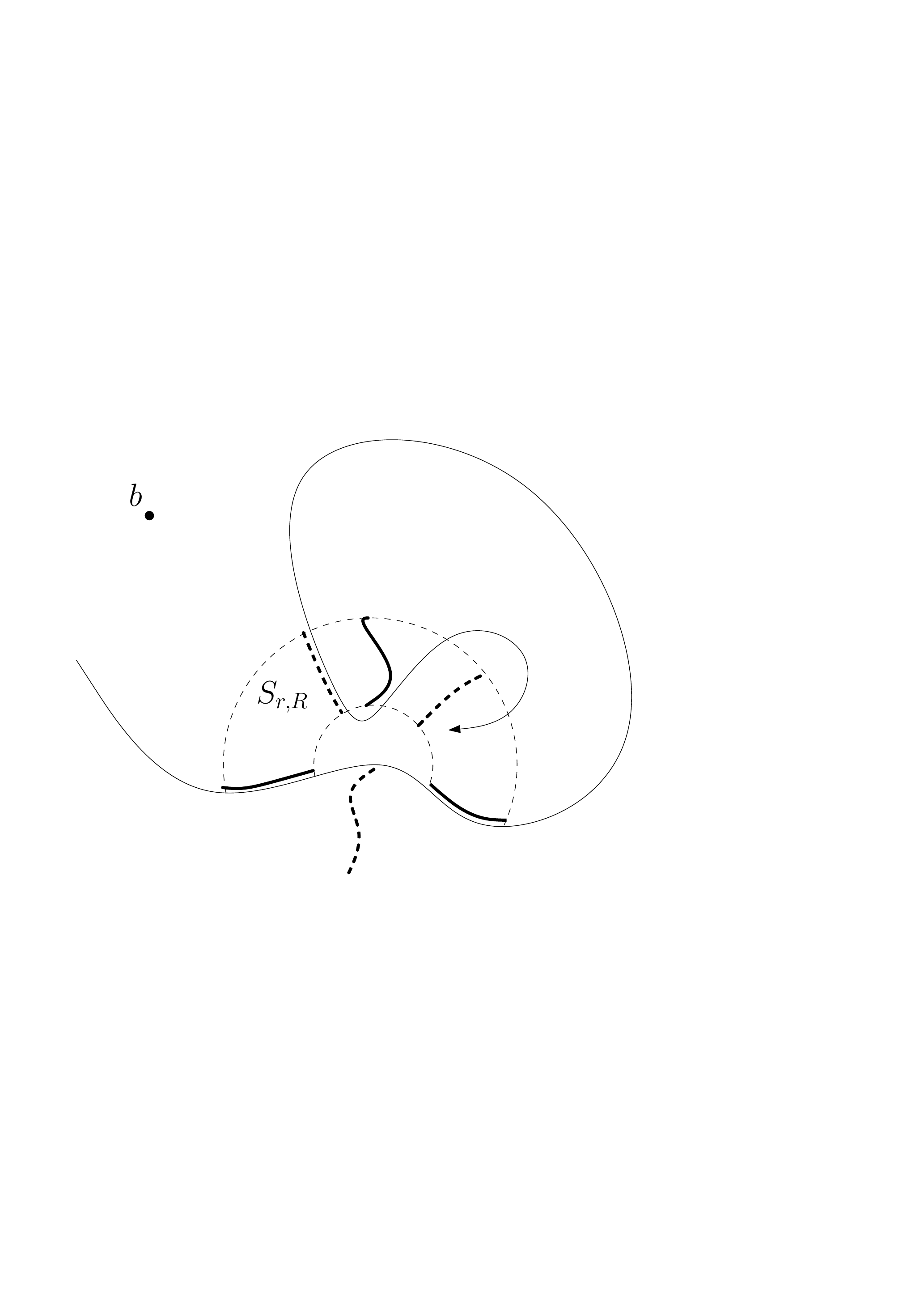}
    $\quad\quad\quad$
    \includegraphics[width=0.55\textwidth]{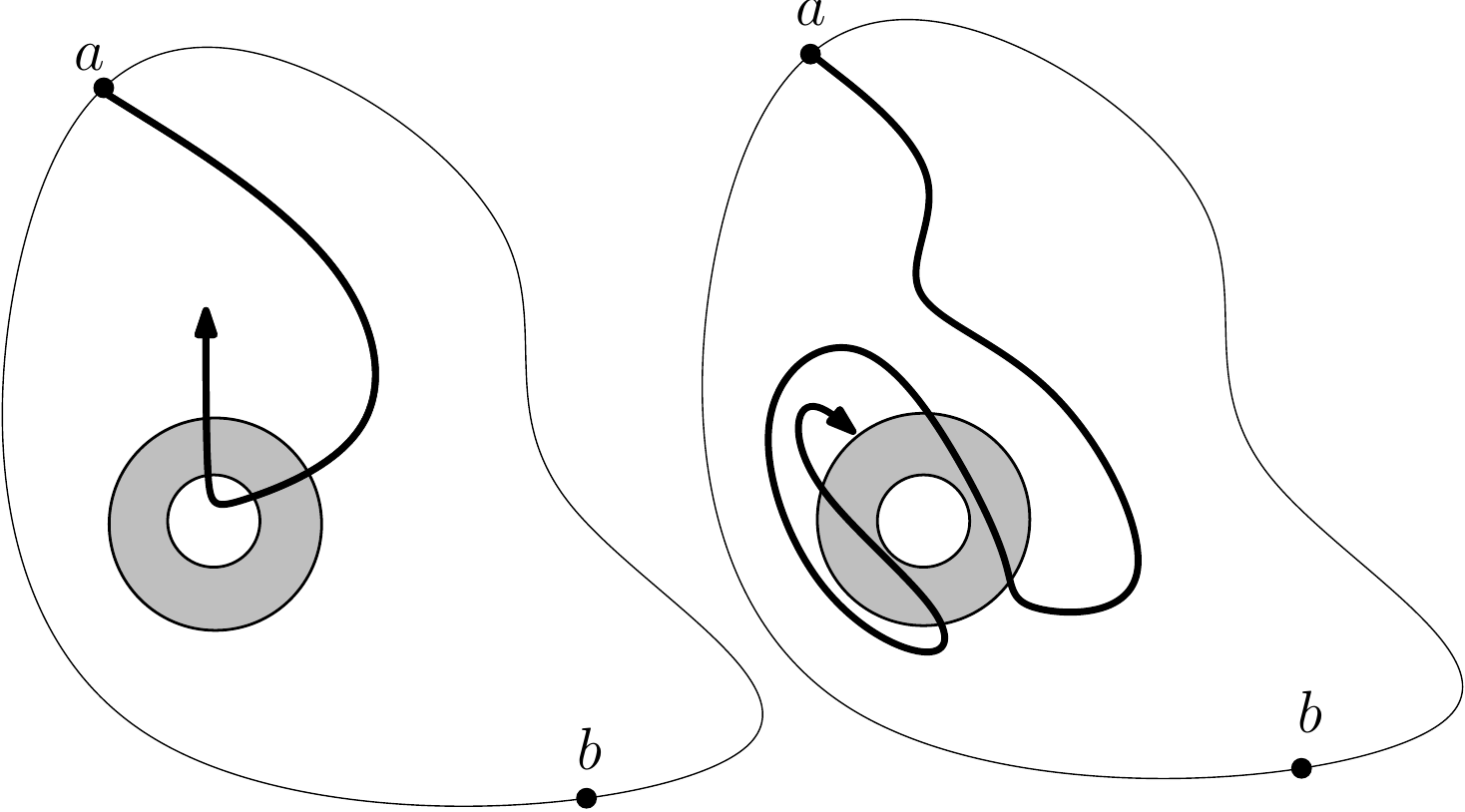}
  \end{center}
  \caption{\label{fig:six_arm_event}\textbf{Left:} An example of a
    fjord. Seen from $b$, the $h$-capacity (roughly speaking, the
    size) of the hull does not grow much while the curve is in the
    fjord. The event involves six alternating open (in plain lines) and closed (in dotted lines)
    crossings of the annulus. \textbf{Right:} Conditionally on the
    beginning of the curve, the crossing of the annulus is unforced on
    the left, while it is forced on the right.}
\end{figure}


Recently, Kemppainen and Smirnov proved a ``structural theorem''
characterizing sequences of random discrete curves
whose limit satisfies Condition C2 almost surely. This theorem generalizes
Theorem~\ref{Aizenman-Burchard}, in the sense that the condition is weaker and the conclusion stronger.  Before stating the theorem, we need
a definition. Fix $\Omega$ and two boundary points $a$ and $b$ and
consider a curve $\Gamma$. A sub-path $\Gamma[t_0,t_1]$ of a
continuous curve $\Gamma$ is called a \emph{crossing} of the annulus
$S_{r,R}(x)$ if $\Gamma_{t_0}\in\partial\Lambda_r(x)$ and
$\Gamma_{t_1}\in\partial\Lambda_R(x)$, where $t_0<t_1$ or $t_1<t_0$
and $\partial \Lambda_r$ is the boundary of $\Lambda_r$. A crossing is
called \emph{unforced} if there exists a path $\tilde \Gamma$ from $a$
to $b$ not intersecting $\Lambda_R(x)$.

\begin{theorem}[Kemppainen-Smirnov, \cite{KS10}]\label{thm:KS}Let
  $(\Omega,a,b)$ be a domain with two points on the boundary. For
  $\delta>0$, $\Gamma_\delta$ is a random continuous curve on
  $(\Omega_\delta,a_\delta,b_\delta)$ with law $\mu_\delta$.

  If there exist $C>1$ and $\Delta>0$ such that for any
  $0<\delta<r<R/C$ and for any stopping time $\tau$,
  $$\mu_\delta(\gamma_\delta[\tau,\infty]\text{ contains an unforced
    crossing in $\Omega\setminus\gamma_\delta[0,\tau]$ of $S_{r,R}(x)$})\le
  C\left(\frac{r}{R}\right)^\Delta$$ for any annulus $S_{r,R}(x)$,
   then the family
  $(\Gamma_\delta)_{\delta>0}$ is tight and any sub-sequential limit
  can almost surely be reparametrized as the trace of a Loewner
  chain.
\end{theorem}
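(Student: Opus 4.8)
The plan is to establish the two conclusions separately, feeding a strengthened crossing estimate into the Aizenman--Burchard criterion for tightness, and then—granting tightness and working with a fixed sub-sequential limit—reducing the Loewner statement to the single analytic point that survives passage to the limit. As the preceding discussion records, for a continuous curve Condition (C1) is automatic and (C3) follows from (C1) and (C2), so the entire content of ``reparametrizable as a Loewner trace'' is the strict monotonicity of the $h$-capacity, Condition (C2). Thus the novelty of the statement over Theorem~\ref{Aizenman-Burchard} lies in extracting (C2) from the weaker unforced-crossing hypothesis; tightness itself I will recover as a by-product.

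For tightness I would boost the single unforced-crossing estimate into a bound on the multiple-crossing events $\mathcal A_k(x;r,R)$. Write $m\approx\log_3(R/r)$ and split $S_{r,R}(x)$ into the concentric sub-annuli $S_{3^\ell r,\,3^{\ell+1}r}(x)$, $0\le\ell<m$. A crossing of the large annulus forces a crossing of each sub-annulus, and the key topological observation is that along the nested sequence these crossings become \emph{unforced}: once the curve has traversed the outer sub-annuli, the already-explored outer region supplies a route to $b$ avoiding the smaller balls, so a deep re-entry is avoidable in the sense of Fig.~\ref{fig:six_arm_event}. Introducing the stopping times at which the curve begins its successive scale-crossings and applying the hypothesis at each of them—precisely the purpose of the quantification over \emph{all} stopping times—one chains the per-scale factor $C(1/3)^{\Delta}$ across the $m$ scales to obtain geometric decay,
\[
\mu_\delta\big(\mathcal A_k(x;r,R)\big)\;\le\;C_k\Big(\tfrac{r}{R}\Big)^{\Delta_k},
\]
where demanding $k$ disjoint crossings of each scale sharpens the per-scale factor so that $\Delta_k$ can be pushed above $2$ by taking $k$ large, exactly as in the proof of Theorem~\ref{compactness_interface}. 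Theorem~\ref{Aizenman-Burchard} then gives tightness of $(\Gamma_\delta)_{\delta>0}$.

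For the Loewner conclusion I would verify (C2) for a sub-sequential limit $\Gamma$. Strict monotonicity of capacity is equivalent to the tip $\Gamma_t$ remaining accessible from $b$ at all times, i.e.\ the curve never dives into a fjord that its past has pinched off from $b$; the $h$-capacity of a hull cannot stall unless the curve lingers inside such a fjord at some scale. The unforced-crossing condition is the quantitative negation of this at every scale: a degenerating fjord compels the curve to re-cross a small annulus after an earlier passage has already provided an outer route to $b$, which is an unforced crossing, and the hypothesis makes the existence of such crossings improbable uniformly in $\delta$. Because the relevant events are expressed through crossings of fixed annuli—quantities stable under the weak topology on curves—this uniform control descends to the limit, ruling out capacity plateaus almost surely and yielding (C2) for $\Gamma$.

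The hard part, and the genuine crux, is the analytic translation from these physical crossing estimates to control of the uniformizing maps $g_t$ and the driving function $W_t$. Concretely I would fix the $h$-capacity parametrization and, using the multiple-crossing bound already obtained, verify the Local Growth Condition by showing that $\mathrm{diam}\,g_t(K_{t+\eta}\setminus K_t)$ is uniformly small: this requires relating the number of crossings of $S_{r,R}(x)$ to the conformal modulus of the corresponding topological annulus and then invoking the Beurling projection estimate to bound the distortion of $g_t$ near the tip, from which continuity of $W_t$ follows through Loewner's equation~\eqref{fg}. The delicate points are the uniformity in $\delta$ of these conformal estimates and the roughness of the random fractal boundary of $K_t$, which forbids any naive smoothness argument; everything must be phrased in terms of extremal length and crossing probabilities before reading off the driving function in the limit. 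I expect this conformal-modulus bookkeeping—rather than the combinatorics of the tightness step—to be where the real difficulty lies, and it is exactly the place where the strength of the Kemppainen--Smirnov condition over the bare Aizenman--Burchard hypothesis is needed (see~\cite{KS10,Law05,AB99} for the full analytic development).
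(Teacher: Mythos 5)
The paper itself does not prove Theorem~\ref{thm:KS}: it states the result and explicitly refers to \cite{KS10} for a complete account, so there is no in-paper argument to compare yours against; what can be assessed is whether your sketch would close the statement on its own, and it would not. The first genuine gap is in the tightness step: your claim that, along the concentric sub-annuli, the successive crossings ``become unforced'' is false as stated. Whenever $S_{r,R}(x)$ separates $a$ from $b$ --- the typical situation --- the \emph{first} crossing of each sub-annulus is forced, since every curve from $a$ to $b$ must make it, and the hypothesis gives no control whatsoever on forced crossings. What the Kemppainen--Smirnov argument actually establishes is that among $k$ crossings of a \emph{fixed} annulus, all but a bounded number are unforced after suitable stopping times, via a case analysis depending on whether $\Lambda_R(x)$ meets the boundary arcs near $a$ or $b$; only with that lemma can you apply the hypothesis $k-O(1)$ times at a fixed scale (rather than once per scale, as you propose) and then chain over scales to push $\Delta_k$ above $2$ and invoke Theorem~\ref{Aizenman-Burchard}. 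Without it, your per-scale factor $C(1/3)^{\Delta}$ is simply unavailable.

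The second gap is the passage to the limit for Condition~(C2). Your reduction is right --- as the text notes, (C1) is automatic and (C3) follows for continuous curves, so (C2) is the whole content --- but the sentence ``this uniform control descends to the limit'' is the theorem, not a proof of it. Crossing events are not continuous functionals of the curve in the metric $d$, so one needs a semicontinuity buffer (a crossing of $S_{2r,R/2}(x)$ by the limit forces a crossing of $S_{r,R}(x)$ by approximating curves), a union bound over a countable dyadic family of annuli and stopping times, and a Borel--Cantelli argument to exclude capacity plateaus \emph{simultaneously} at all times and all locations almost surely; none of this appears in your sketch. Finally, your last paragraph undercuts your second: having correctly observed that (C3) is automatic once (C2) holds for a continuous curve, you then declare the ``genuine crux'' to be verifying the Local Growth Condition through Beurling projection and modulus estimates, and defer exactly that work to \cite{KS10,Law05,AB99}. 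By your own account, then, the proposal is an outline of the Kemppainen--Smirnov strategy with its two hardest steps (the forced/unforced bookkeeping and the limiting argument) asserted rather than proved.
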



We do not prove this theorem and refer instead to the original article
for a complete account. Theorem~\ref{RSW} implies the hypothesis of
the previous theorem following the same lines as in the proof of
Theorem~\ref{compactness_interface}. As a consequence,
Theorem~\ref{Loewner} follows readily.

In order to show Theorem~\ref{Loewner} in the case of percolation, one can run an alternative argument based
on Corollary~\ref{any_shape} and the so-called 6-arm event. This argument has already been
described precisely in \cite{Wer09}. For this reason, we do not repeat
it here and refer to these lecture notes for details.

\subsubsection{Convergence of exploration paths to $\SLE(6)$}

Fix a topological triangle $(\Omega,A,B,C)$, \emph{i.e.} a domain
$\Omega\ne \mathbb C$ delimited by a non-intersecting continuous curve
and three distinct points $A$, $B$ and $C$ on its boundary, indexed in
counter-clockwise order. Let
$(\Omega_\delta,A_\delta,B_\delta,C_\delta)$ be a discrete
approximation of $(\Omega,A,B,C)$ and $z_\delta\in
\Omega_\delta^*$. Recall the definition of $E_{A,\delta}(z_\delta)$
used in the proof of Theorem~\ref{thm:cardy}: it is the event that
there exists a non-self-intersecting path of open sites in
$\Omega_\delta$, separating $A_\delta$ and $z_\delta$ from $B_\delta$
and $C_\delta$. For technical reasons, we keep the dependency on the
domain in the notation for the duration of this section, and we set
$E_{\Omega_\delta,A_\delta,B_\delta,C_\delta}(z_\delta):=E_{A,\delta}(z_\delta)$. Also
define
$$H_n(\Omega_\delta,A_\delta,B_\delta,C_\delta,z_\delta) := \P(E_{\Omega_\delta\setminus\gamma[0,n],\gamma_n,B_\delta,C_\delta}
(z_\delta)).$$

\begin{lemma}
  For any $(\Omega,A,B,C)$, and for any $z\in \Omega$ and $\delta>0$,
  the function
  $(H_n(\Omega_\delta,A_\delta,B_\delta,C_\delta,z_\delta))_{n\ge 0}$
  is a martingale with respect to $(\mathcal F_n)_{n\ge 0}$, where
  $\mathcal F_n$ is the $\sigma$-algebra generated by the the first
  $n$ steps of $\gamma_\delta$.
\end{lemma}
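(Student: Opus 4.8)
The plan is to exhibit $(H_n)_{n\ge0}$ as the sequence of conditional probabilities of a single, fixed event with respect to the filtration $(\mathcal F_n)$; once this is done the martingale property is immediate from the tower property. Concretely, I would set $E:=E_{\Omega_\delta,A_\delta,B_\delta,C_\delta}(z_\delta)=E_{A,\delta}(z)$ and aim to prove the identity
\begin{equation}\label{eq:cond}
H_n(\Omega_\delta,A_\delta,B_\delta,C_\delta,z_\delta)~=~\P\big(E\mid \mathcal F_n\big),
\end{equation}
understood as an equality of $\mathcal F_n$-measurable random variables (the left-hand side is random because $\gamma[0,n]$ and the tip $\gamma_n$ enter the definition of $H_n$). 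Granting \eqref{eq:cond}, one gets $\mathbb{E}[H_{n+1}\mid\mathcal F_n]=\mathbb{E}[\P(E\mid\mathcal F_{n+1})\mid\mathcal F_n]=\P(E\mid\mathcal F_n)=H_n$, since $\mathcal F_n\subset\mathcal F_{n+1}$, which is exactly the claim. Note that at $n=0$ one has $\gamma[0,0]=\{A_\delta\}$ and $\gamma_0=A_\delta$, so \eqref{eq:cond} is consistent with $H_0=\P(E)=H_{A,\delta}(z)$.

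Two ingredients are needed for \eqref{eq:cond}. The first is the discrete domain Markov property of the exploration path. By construction $\gamma_\delta$ is the interface between an open cluster and a closed cluster fixed by the boundary conditions, and revealing its first $n$ steps queries exactly the colours of the sites bordering $\gamma[0,n]$ --- open on one side of the interface, closed on the other --- while leaving every other site of $\Omega_\delta$ unexamined. Hence, conditionally on $\mathcal F_n$, the restriction of the configuration to the unexplored region $\Omega_\delta\setminus\gamma[0,n]$ is an unbiased critical percolation configuration, and the continuation of $\gamma_\delta$ is precisely the exploration path of $\Omega_\delta\setminus\gamma[0,n]$ started at the current tip $\gamma_n$ with the same target endpoint. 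This is the discrete analogue of Definition~\ref{def:markov_property}.

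The second, and genuinely delicate, ingredient is a geometric identification: conditionally on $\{\gamma[0,n]=\eta\}$, the event $E$ coincides with the event $E_{\Omega_\delta\setminus\eta,\,\gamma_n,\,B_\delta,C_\delta}(z_\delta)$ read off from the unexplored region. The point is that the open side of the revealed interface forms an open arc issuing from the tip $\gamma_n$, which takes over the role played by the boundary arc at $A_\delta$ in the definition of $E$: an open path separating $\{A_\delta,z_\delta\}$ from $\{B_\delta,C_\delta\}$ in the full domain exists if and only if there is an open path in $\Omega_\delta\setminus\eta$ separating $\{\gamma_n,z_\delta\}$ from $\{B_\delta,C_\delta\}$, since any such separating path may be rerouted along the already-open side of $\eta$. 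Combined with the first ingredient, which guarantees that the unexplored configuration is a fresh critical percolation configuration, this yields $\P(E\mid\gamma[0,n]=\eta)=g(\eta)$ for each admissible $\eta$, where $g(\eta)$ is the deterministic value of $H_n$ on $\{\gamma[0,n]=\eta\}$, and hence \eqref{eq:cond}.

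I expect the main obstacle to be this last step --- the careful topological bookkeeping. One must in particular treat the cases where $\gamma[0,n]$ has already decided $E$: when the explored interface has itself separated $z_\delta$ from $\{B_\delta,C_\delta\}$ on the open side, forcing $H_n\in\{0,1\}$, and one must verify that the reduced-domain event is correspondingly certain or impossible, so that the correspondence remains an \emph{exact} equality of events rather than a one-sided inclusion. Once this is checked uniformly in $\eta$, summing \eqref{eq:cond} over the disjoint events $\{\gamma[0,n]=\eta\}$ and invoking the tower property completes the proof.
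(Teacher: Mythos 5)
Your proposal is correct and follows essentially the same route as the paper's proof: the paper likewise observes that, conditionally on $\mathcal F_n$, the configuration in the slit domain $\Omega_\delta\setminus\gamma_\delta[0,n]$ is again critical percolation in a topological triangle with the tip playing the role of $A_\delta$, so that $H_n$ is the conditional expectation of the indicator of $E_{\Omega_\delta,A_\delta,B_\delta,C_\delta}(z_\delta)$ given $\mathcal F_n$, whence the martingale property is automatic. The only difference is one of exposition: you spell out the geometric identification of the separation event in the slit domain with the original event (rerouting along the open side of the explored interface, and the already-decided cases), which the paper leaves implicit in a three-sentence argument.
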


\begin{proof}
  The slit domain created by ``removing'' the first $n$ steps of the
  exploration path is again a topological triangle. Conditionally on
  the $n$ first steps of $\gamma_\delta$, the law of the configuration
  in the new domain is exactly percolation in
  $\Omega\setminus\gamma_\delta[0,n]$. This observation implies that
  $H_n(\Omega_\delta,A_\delta,B_\delta,C_\delta,z_\delta)$ is the
  random variable
  $1_{E_{\Omega_\delta,A_\delta,B_\delta,C_\delta}(z_\delta)}$
  conditionally on $\mathcal F_n$, therefore it is automatically a
  martingale.
\end{proof}

\begin{proposition}\label{identification}
  Any sub-sequential limit of $(\gamma_\delta)_{\delta>0}$ which is the
  trace of a Loewner chain is the trace of $\SLE(6)$.
\end{proposition}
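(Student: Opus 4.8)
The plan is to work in the upper half-plane and to use the martingale of the previous lemma, together with the Cardy--Smirnov formula (Theorem~\ref{thm:cardy}), to identify the driving process of the limiting curve. Fix a conformal map $\phi$ from $(\Omega,a,b)$ onto $(\mathbb H,0,\infty)$, and let $\gamma$ be the given sub-sequential limit, which by assumption is the trace of a Loewner chain; denote by $(g_t)_{t\ge0}$ the associated family of conformal maps and by $(W_t)_{t\ge0}$ its continuous driving process, so that $\partial_t g_t(w)=2/(g_t(w)-W_t)$. Since $\SLE(6)$ is by definition the Loewner chain driven by $\sqrt6\,B_t$, it suffices to prove that $W_t=\sqrt6\,B_t$, and by the Lévy characterization recalled in the strategy it is enough to show that $W_t$ and $W_t^2-6t$ are martingales.

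First I would upgrade the discrete martingale of the previous lemma to a continuum martingale. Fix an interior point $z$ and two boundary points $B,C$. By Theorem~\ref{thm:cardy} applied in the slit domain $\Omega\setminus\gamma_\delta[0,n]$ (which is again a topological triangle, with marked points $\gamma_n$, $B$, $C$), the observable $H_n(\Omega_\delta,A_\delta,B_\delta,C_\delta,z_\delta)$ converges, as $\delta\to0$ along the chosen subsequence, to the continuum Cardy function $h_A$ evaluated in $\Omega\setminus\gamma[0,t]$ with marked points $\gamma_t$, $B$, $C$ and interior point $z$. Passing the discrete martingale property to the limit (here one uses the uniform Hölder control of $H_{A,\delta}$ from the precompactness step, together with the regularity of $\gamma$ near its tip coming from Condition~C2) yields that
$$M_t\eqd h_A\big(\Omega\setminus\gamma[0,t],\gamma_t,B,C;z\big)$$
is a martingale for the filtration of the limiting curve, for every choice of $z,B,C$.

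Next I would transport $M_t$ to $\mathbb H$ and differentiate. By conformal invariance of $h_A$ (recall $h=\Phi_{\Omega,A,B,C}$ and $h_A=(2\Re e\,\Phi+1)/3$, so that $h_A$ is harmonic in $z$), one has $M_t=\mathcal H(W_t;x_t,y_t;z_t)$, where $x_t=g_t(\phi(B))$, $y_t=g_t(\phi(C))$, $z_t=g_t(\phi(z))$ and $\mathcal H$ is the explicit Cardy observable in $\mathbb H$. All of $x_t,y_t,z_t$ evolve deterministically given $W_t$ through the Loewner equation, while the tip sits at $W_t$. Applying Itô's formula and collecting the drift, the requirement that $M_t$ be a martingale for \emph{all} admissible positions of $z_t,x_t,y_t$ forces, through the second-order ODE satisfied by Cardy's hypergeometric function, that the quadratic variation obeys $\dd\langle W\rangle_t=6\,\dd t$ and that $W$ has vanishing drift; equivalently, choosing $z$ appropriately isolates the two observables showing that $W_t$ and $W_t^2-6t$ are martingales. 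The special value $\kappa=6$ is precisely what makes the drift cancel, and reflects the particular form of Cardy's formula. Lévy's theorem then gives $W_t=\sqrt6\,B_t$, so $\gamma$ is the trace of $\SLE(6)$.

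The main obstacle is the first step: rigorously passing from the discrete martingale to the continuum martingale $M_t$. This requires the convergence of crossing probabilities of Theorem~\ref{thm:cardy} to hold \emph{uniformly} over the random family of slit domains $\Omega\setminus\gamma[0,t]$, and it requires enough control on the limiting curve near its growing tip (provided by Condition~C2 and the arm estimates behind Theorem~\ref{RSW}) to guarantee that the marked point $\gamma_t$ and the observable depend continuously on $t$. Once this convergence and the regularity of $W$ needed to run Itô's formula are in place, the identification of $\kappa=6$ is a direct, if delicate, computation.
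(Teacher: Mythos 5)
Your overall architecture matches the paper's: upgrade the discrete Cardy observable to a martingale for the limiting curve, express it through the Loewner maps $g_t$ and the driving process $W_t$, and conclude via L\'evy's characterization that $W_t=\sqrt6\,B_t$. The discrete-to-continuum step is handled in the paper essentially as you describe (optional stopping of the bounded discrete martingale at the $h$-capacity times $\tau_t$, plus uniform convergence on compact sets), so that part of your plan is sound, modulo the technical work both you and the paper acknowledge. A minor slip: with your normalization $\phi$ sends $B$ to $\infty$, so $x_t=g_t(\phi(B))$ is not a finite marked point; the paper keeps $B$ at infinity throughout and works only with the two finite parameters $z=\phi(Z)$ and $c=\phi(C)$.

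The genuine gap is in your identification step. You propose to apply It\^o's formula to the observable and read off the drift and quadratic variation of $W$ from the second-order ODE satisfied by Cardy's function. But It\^o's formula requires $W$ to be a semimartingale, and at this stage the only information available about $W$ is that it is continuous --- that is exactly what Theorem~\ref{Loewner} provides, and nothing more. The semimartingale property of $W$ is part of what must be proved; postponing it as ``the regularity of $W$ needed to run It\^o's formula'' is circular, since no separate argument supplies that regularity before one knows $W$ is a Brownian motion. The paper circumvents this with a purely algebraic expansion involving no stochastic calculus: starting from the martingale identity
\begin{equation*}
  \mathbb E\left[\, h\left(\frac{g_t(z)-W_t}{g_t(c)-W_t}\right) \,\middle|\, \mathcal G_s\right]
  \;=\; h\left(\frac{g_s(z)-W_s}{g_s(c)-W_s}\right),
\end{equation*}
it lets $z,c\to\infty$ with fixed ratio $z/c=\lambda$, uses $g_t(z)=z+2t/z+O(1/z^2)$, Taylor-expands $h$ around $\lambda$, and matches the coefficients of $1/c$ and $1/c^2$ on both sides. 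This yields directly
\begin{equation*}
  \mathbb E[W_t\,|\,\mathcal G_s]=W_s,
  \qquad
  \mathbb E[W_t^2\,|\,\mathcal G_s]=W_s^2+\frac{4(1+\lambda)h'(\lambda)/\lambda}{2h'(\lambda)+(\lambda-1)h''(\lambda)}\,(t-s),
\end{equation*}
and the Schwarz--Christoffel formula for $h$ shows the coefficient of $(t-s)$ is identically $6$. Only continuity of $W$ is then needed to invoke L\'evy's theorem. To make your proof correct, you must replace the It\^o step by this expansion (or an equivalent argument that extracts the two conditional-moment identities without doing stochastic calculus on $W$).
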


\begin{proof}
  Once again, we only sketch the proof in order to highlight the important steps. Consider a sub-sequential limit $\gamma$ in the domain $(\Omega,A,B)$
  which is a Loewner chain. Let $\phi$ be a map from $(\Omega,A,B)$ to
  $(\mathbb H,0,\infty)$. Our goal is to prove that $\tilde{\gamma} :=
  \phi(\gamma)$ is a chordal $\SLE(6)$ in the upper half-plane.

  Since $\gamma$ is assumed to be a Loewner chain, $\tilde \gamma$ is
  a growing hull from $0$ to $\infty$; we can assume that it is
  parametrized by its $h$-capacity. Let $W_t$ be its continuous
  driving process. Also define $g_t$ to be the conformal map from
  $\mathbb H\setminus \tilde{\gamma}[0,t]$ to $\mathbb H$ such that
  $g_t(z) = z + 2t/z + O(1/z^2)$ when $z$ goes to infinity.

  Fix $C \in \partial \Omega$ and $Z\in \Omega$. For $\delta>0$,
  recall that $H_n(\Omega_\delta, A_\delta, B_\delta, C_\delta,
  Z_\delta)$ is a martingale for $\gamma_\delta$. Since the martingale
  is bounded, $H_{\tau_t}(\Omega_\delta, A_\delta, B_\delta, C_\delta,
  Z_\delta)$ is a martingale with respect to $\mathcal F_{\tau_t}$,
  where $\tau_t$ is the first time at which $\phi(\gamma_\delta)$ has
  a $h$-capacity larger than $t$. Since the convergence of
  $\gamma_\delta$ to $\gamma$ is uniform on every compact subset of
  $(\Omega,A,B)$, one can see (with a little bit of work) that
  $$H_t(Z):=\lim_{\delta\rightarrow
    0}H_{\tau_t}(\Omega_\delta, A_\delta, B_\delta, C_\delta,
  Z_\delta)$$ is a martingale with respect to $\mathcal G_t$, where
  $\mathcal G_t$ is the $\sigma$-algebra generated by the curve
  $\tilde \gamma$ up to the first time its $h$-capacity exceeds
  $t$. By definition, this time is $t$, and $\mathcal G_t$ is the
  $\sigma$-algebra generated by $\tilde \gamma[0,t]$. In other words,
  it is the natural filtration associated with the driving process
  $(W_t)$.

  We borrow the definitions of $h_A$ and $h$ from the proof of the
  Cardy--Smirnov formula. By first mapping $\Omega$ to $\mathbb H$ and
  then applying the Cardy-Smirnov formula, we find \[ H_t(Z) = h_A
  \left( \frac {g_t(z) - W_t} {g_t(c) - W_t} \right), \] where we
  define $z := \phi(Z)$ and $c := \phi(C)$. This is a martingale for
  every choice of $z$ and $c$, so we get the family of identities \[
  \mathbb E \left[ h_A \left( \frac {g_t(z) - W_t} {g_t(c) - W_t}
    \right) \middle| \mathcal G_s \right] = h_A \left( \frac {g_s(z) -
      W_s} {g_s(c) - W_s} \right) \] for all $z \in \mathbb H$, $c \in
  \mathbb R$ and $0<s<t$ such that $z$ and $c$ are both within the
  domain of definition of $g_t$. Now, we would like to express the
  previous equality in terms of $h$ instead of $h_A$ (recall that
  $h_A=\frac13(2\Re e(h)+1)$). Noting that the two functions below, as
  functions of $z$, are holomorphic and equal at $c$, we obtain
  \[ \mathbb E \left[ h \left( \frac
      {g_t(z) - W_t} {g_t(c) - W_t} \right) \middle| \mathcal G_s
  \right] = h\left( \frac {g_s(z) - W_s} {g_s(c) - W_s} \right). \]

  We know the asymptotic expansion of $g_s$ and $g_t$ around infinity,
  so the above becomes
  \begin{equation}\label{eq:martingale}
    \mathbb E \left[ h
      \left( \frac {z - W_t + 2t/z + O(1/c^2)} {c - W_t + 2t/c + O(1/z^2)}
      \right) \middle| \mathcal G_s \right] = h \left( \frac {z - W_s +
        2s/z + O(1/z^2)} {c - W_s + 2s/c + O(1/c^2)} \right).
  \end{equation}
  Letting $z$ and $c$ go to infinity with fixed ratio $z/c = \lambda
  \in \mathbb H$, we have
  \begin{align*}
    h \left( \frac {z - W_s + 2s/z + O(1/z^2)} {c - W_s + 2s/c +
        O(1/c^2)} \right) &= h \left( \frac {\lambda - W_s/c +
        2s/\lambda c^2 + O(1/c^2)} {1 - W_s/c + 2s/c^2 + O(1/c^3)}
    \right) \\
    & \hspace{-5cm} = h \left( \lambda + \frac {(\lambda-1) W_s} c +
      \frac {(\lambda-1) W_s^2 + 2(1-\lambda^2)s/\lambda} {c^2} +
      O(c^{-3}) \right). \\
    & \hspace{-5cm} = h(\lambda) + \frac {(\lambda-1) h'(\lambda) W_s}
    {c} \\
    & \hspace{-3.6cm} + \frac {(\lambda-1)W_s^2 [h'(\lambda) +
      (\lambda-1) h''(\lambda)/2]+ 2 (1-\lambda^2) s
      h'(\lambda)/\lambda } {c^2} + O(c^{-3}).
  \end{align*}
  Using this expansion on both sides of~\eqref{eq:martingale} and
  matching the terms, we obtain two identities for $(W_t)$: \[ \mathbb
  E [W_t | \mathcal G_s] = W_s, \quad E[ W_t^2 | \mathcal G_s ] =
  W_s^2 + \frac {4 (1+\lambda) h'(\lambda) / \lambda} {2 h'(\lambda) +
    (\lambda-1) h''(\lambda)} (t-s). \]

  The function $h$ is a conformal map from the upper-half plane to the
  equilateral triangle, sending $0$, $1$ and $\infty$ to the vertices
  of the triangle; up to (explicit) additive and multiplicative
  constants $A$ and $B$, it can be written using the
  Schwarz-Christoffel formula as $$h(\lambda) =A \int^\lambda
  [z(1-z)]^{-2/3} \; \dd z+B.$$ From this, one obtains $h'(\lambda)=A
  [\lambda (1-\lambda)]^{-2/3}$ and $$\frac {h''(\lambda)}
  {h'(\lambda)} = - \frac23 \left( \frac 1 \lambda - \frac 1
    {1-\lambda}\right) = \frac {2(2\lambda-1)} {3\lambda
    (1-\lambda)}.$$ Plugging this into the previous expression shows
  that the coefficient of $(t-s)$ is identically equal to 6, and since
  we know that $(W_t)$ is a continuous process, L\'evy's theorem
  implies that it is of the form $(\sqrt6 B_t)$ where $(B_t)$ is a
  standard real-valued Brownian motion. This implies that $\gamma$ is
  the trace of the $\SLE(6)$ process in $(\Omega,A,B)$.
\end{proof}

\begin{proof}[Proof of Theorem~\ref{thm:SLE}]
  By Theorem~\ref{compactness_interface}, the family of exploration
  processes is tight. Using Theorem~\ref{Loewner}, any sub-sequential
  limit is the time-change of the trace of a Loewner chain. Consider
  such a sub-sequential limit and parametrize it by its $h$-capacity.
  Proposition~\ref{identification} then implies that it is the trace
  of SLE(6). The possible limit being unique, we are done.
\end{proof}

\section{Critical exponents}

To quantify connectivity properties at $p=1/2$, we introduce the
notion of \emph{arm-event}. Fix a sequence $\sigma\in\{0,1\}^j$ of $j$
colors (open $1$ or closed $0$). For $1\le n<N$, define $A_\sigma(n,N)$ to
be the event that there are $j$ \emph{disjoint} paths from $\partial
\Lambda_n$ to $\partial \Lambda_N$ with colors $\sigma_1$, \ldots,
$\sigma_j$ where the paths are indexed in counter-clockwise order. We
set $A_\sigma(N)$ to be $A_\sigma(k,N)$ where $k$ is the smallest
integer such that the event is non-empty. For instance, $A_1(n,N)$ is
the one-arm event corresponding to the existence of an open crossing from
the inner to the outer boundary of $\Lambda_N\setminus \Lambda_n$.

An adaptation of Corollary~\ref{critical_exponent} implies that there
exist $\alpha'_\sigma$ and $\beta'_\sigma$ such that
$$(n/N)^{\alpha'_\sigma}~\le~\P[A_\sigma(n,N)]~\le~(n/N)^{\beta'_\sigma}.$$
It is therefore natural to predict that there exists a \emph{critical
  exponent} $\alpha_\sigma\in(0,\infty)$ such that
$$\P[A_\sigma(n,N)]~=~(n/N)^{\alpha_\sigma+o(1)},$$
where $o(1)$ is a quantity converging to 0 as $n/N$ goes to 0. The
quantity $\alpha_\sigma$ is called an \emph{arm-exponent}.
We now explain how these exponents can be computed.


\subsection{Quasi-multiplicativity of the probabilities of arm-events}

Let us start by a few technical yet crucial statements on probabilities of arm-events. These statements will be instrumental in all the following proofs.
\begin{theorem}[Quasi-multiplicativity]\label{quasi-multiplicativity}
  Fix a color sequence $\sigma$. There exists $c\in(0,\infty)$
  such that \small $$c \P  \big[ A_{\sigma} (n_1,n_2)
  \big]\P \big[A_{\sigma}(n_2,n_3)\big]\le
  \P\big[A_{\sigma}(n_1,n_3)\big] \le \P
  \big[ A_{\sigma} (n_1,n_2) \big]
  \P\big[A_{\sigma}(n_2,n_3)\big]$$\normalsize for every
  $n_1<n_2<n_3$.
\end{theorem}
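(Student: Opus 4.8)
The plan is to treat the two inequalities separately; the right-hand (upper) bound is elementary while the left-hand (lower) bound carries all the difficulty. For the upper bound, I would observe that if there are $j$ disjoint arms of colors $\sigma$ crossing the annulus $\Lambda_{n_3}\setminus\Lambda_{n_1}$, then restricting each arm to $\Lambda_{n_2}\setminus\Lambda_{n_1}$ produces arms realizing $A_\sigma(n_1,n_2)$, while restricting to $\Lambda_{n_3}\setminus\Lambda_{n_2}$ realizes $A_\sigma(n_2,n_3)$. These two families of sub-arms are supported on disjoint sets of sites, so $A_\sigma(n_1,n_3)$ is contained in $A_\sigma(n_1,n_2)\circ A_\sigma(n_2,n_3)$. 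Since arm events are not increasing (the colors are mixed), I would invoke the Reimer inequality rather than BK, obtaining $\P[A_\sigma(n_1,n_3)]\le\P[A_\sigma(n_1,n_2)]\,\P[A_\sigma(n_2,n_3)]$.

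For the lower bound, the naive attempt fails: the inner arms reach $\partial\Lambda_{n_2}$ at uncontrolled positions, and there is no reason the outer arms start nearby, so the two configurations cannot simply be concatenated. The remedy is a two-step procedure. First, I would prove an \emph{arm-separation} statement: there is a constant $c_0>0$, independent of the scales, such that $A_\sigma(n_1,n_2)$ holds together with the additional requirement that the $j$ arms land near $j$ prescribed, uniformly-spaced points of $\partial\Lambda_{n_2}$ and can be prolonged, with the correct colors, across the annulus $\Lambda_{2n_2}\setminus\Lambda_{n_2}$ while remaining at mutual distance of order $n_2$, with probability at least $c_0\,\P[A_\sigma(n_1,n_2)]$. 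The analogous statement would hold for the outer arms landing from outside on $\partial\Lambda_{n_2}$.

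Granting separation, the gluing is routine. The three regions---the inner annulus, the gluing annulus around $\partial\Lambda_{n_2}$, and the outer annulus---involve disjoint sites, hence independent configurations. In the gluing annulus the prescribed landing points of the inner and outer arms are well-separated and appear in matching counterclockwise color order, so for each $i$ one can route a monochromatic (open or closed according to $\sigma_i$) connecting path inside its own angular sector; these sectors are disjoint, so the connections depend on disjoint sites and their probabilities, each bounded below by a constant via Theorem~\ref{RSW} and Corollary~\ref{any_shape}, simply multiply. Combining the separated inner event, the gluing event, and the separated outer event yields $\P[A_\sigma(n_1,n_3)]\ge c\,\P[A_\sigma(n_1,n_2)]\,\P[A_\sigma(n_2,n_3)]$.

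The main obstacle is the arm-separation statement itself. Its proof is an induction on the dyadic scales inside the annulus combined with repeated use of RSW: one shows that, conditionally on arms reaching a given scale, they can be pushed apart into a well-separated configuration at the next scale at the cost of only a constant factor, using open and closed circuit pieces to ``fan out'' the arm endpoints. Care is required because the arm event is non-monotone, so the conditioning must be organized around explored interfaces (as in the proof of Lemma~\ref{color_switching}) rather than via the Harris inequality; controlling the dependence between the differently-colored arms while keeping every constant uniform in $n_1,n_2,n_3$ is the delicate point.
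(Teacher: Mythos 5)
Your proposal is correct and follows essentially the same route as the paper, which likewise obtains the upper bound from the disjointness of the two annuli and, for the lower bound, appeals to the well-separated-arms technique (easy gluing via circuits in the monochromatic case, and for general $\sigma$ the separation argument of \cite{Kes87,Nol08}, which the paper does not reproduce). One simplification on the upper bound: since $A_\sigma(n_1,n_2)$ and $A_\sigma(n_2,n_3)$ are measurable with respect to disjoint sets of sites, one has $A_\sigma(n_1,n_3)\subseteq A_\sigma(n_1,n_2)\cap A_\sigma(n_2,n_3)$ and the probability of the intersection factorizes by plain independence, so Reimer's inequality is not needed.
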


The inequality $$\P \big[A_{\sigma}(n_1,n_3)\big] \le \P
\big[A_{\sigma}(n_1,n_2)\big] \P\big[A_{\sigma}(n_2,n_3)\big].$$ is
straightforward using independence. The other one is slightly more
technical. Let us mention that in the case of one arm ($\sigma=1$), or
more generally if all the arms are to be of the same color, the proof
is fairly easy (we recommend it as an exercise; see
Fig.~\ref{fig:gluing} for a hint). For general $\sigma$, the proof
requires the notion of well-separated arms. We do not discuss this
matter here and refer to the well-documented literature
\cite{Kes87,Nol08}.

\begin{figure}[ht!]
  \begin{center}
    \includegraphics[width=0.40\textwidth]{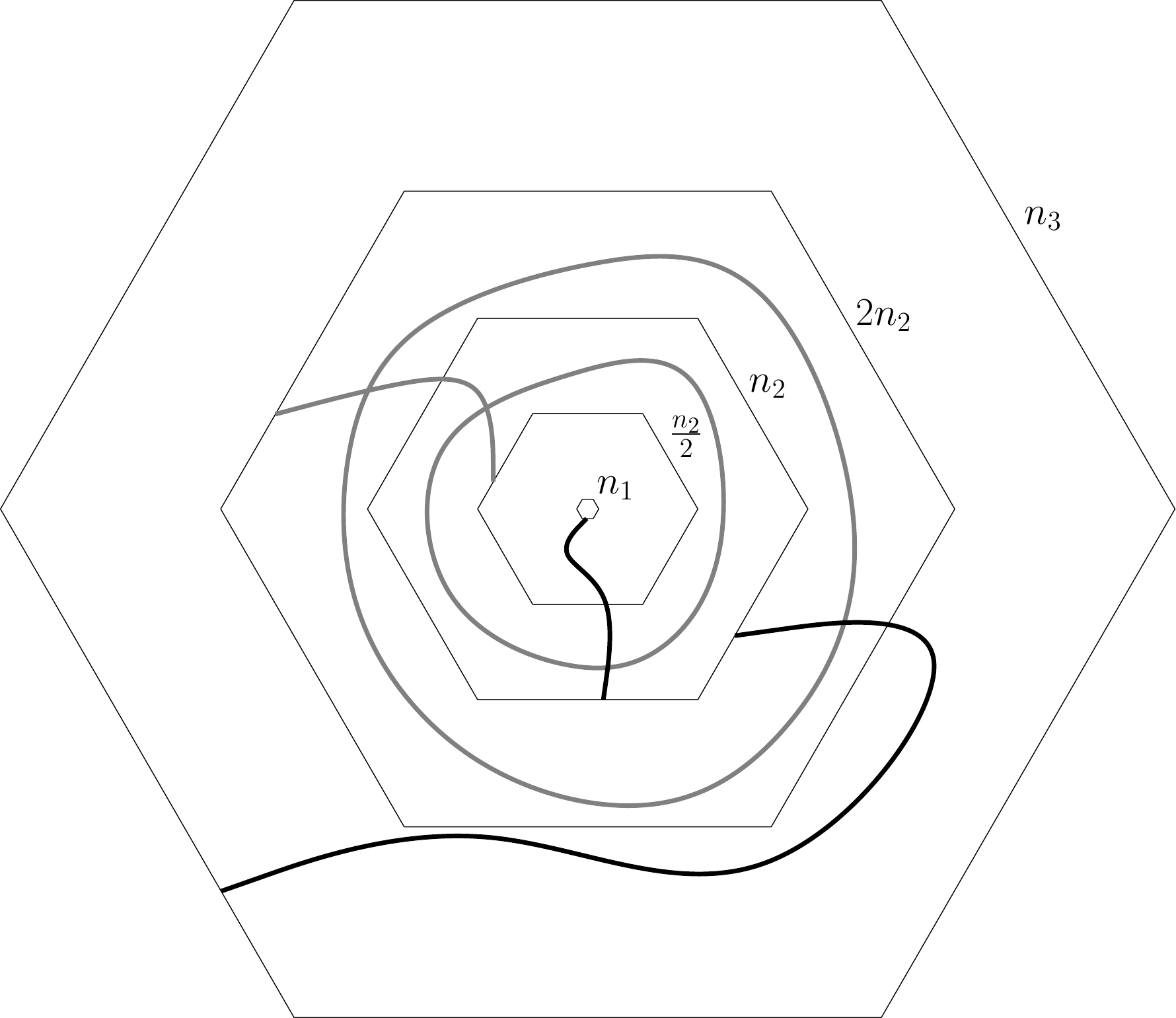}
  \end{center}
  \caption{\label{fig:gluing}The paths in the annuli
    $\Lambda_{n_3}\setminus \Lambda_{n_2}$ and $\Lambda_{n_2}\setminus
    \Lambda_{n_1}$ are in black. A combination of two circuits
    connected by a path (in gray) connects the paths together. The combination of gray paths in the middle occurs with probability bounded away from 0 thanks to
    crossing estimates (Theorem~\ref{RSW} and Corollary~\ref{any_shape}).}
\end{figure}

\medbreak

Another important tool, which is also a consequence of the
well-separation of arms, is the following localization of arms. Let
$\delta>0$; for a sequence $\sigma$ of length $j$, consider $2j+1$
points $x_1,x_2,\dots,x_{2j},x_{2j+1}=x_1$ found in clockwise order on
the boundary of $\Lambda_n$, with the additional condition that
$|x_{k+1}-x_k|\ge \delta n$ for any $k\le 2j$. Similarly, consider
$2j+1$ points $y_1,\dots,y_{2j},y_{2j+1}=y_1$ found in clockwise order
on the boundary of $\Lambda_N$, with the additional condition that
$|y_{k+1}-y_k|\ge \delta N$ for any $k\le 2j$. The sequence of
intervals $(I_k=[x_{2k-1},x_{2k}])_{k\le j}$ and
$(J_k=[y_{2k-1},y_{2k}])_{k\le j}$ are called {\em $\delta$-well
  separated landing sequences}. Let $A^{I,J}_{\sigma}(n,N)$ be the
event that for each $k$ there exists an arm of color $\sigma_k$ from
$I_k$ to $J_k$ in $\Lambda_n\setminus\Lambda_N$, these arms being
pairwise disjoint. This event corresponds to the event $A_\sigma(n,N)$
where arms are forced to start and finish in some prescribed areas of
the boundary.

\begin{proposition}\label{landing}
  Let $\sigma$ be a sequence of colors; for any $\delta>0$ there exists
  $C_\sigma<\infty$ such that, for any $2n\le N$ and any choice of
  $\delta$-well separated landing sequences $I,J$ at radii $n$ and $N$,
  $$\P \big[A^{I,J}_\sigma(n,N)\big]\le
  \P\big[A_\sigma(n,N)\big] \le C_\sigma
  \P\big[A^{I,J}_\sigma(n,N)\big].$$
\end{proposition}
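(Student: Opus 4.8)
The left-hand inequality requires no work: imposing that the $k$-th arm begin in $I_k$ and end in $J_k$ only shrinks the event, so $A^{I,J}_\sigma(n,N)\subseteq A_\sigma(n,N)$ and hence $\P[A^{I,J}_\sigma(n,N)]\le\P[A_\sigma(n,N)]$. Everything therefore reduces to the right-hand inequality, whose content is that funnelling the arms into the prescribed, macroscopically separated landing zones costs only a bounded multiplicative factor. Note that the hypothesis $2n\le N$ leaves room for two disjoint boundary shells, say $\Lambda_{2n}\setminus\Lambda_n$ near the inner circle and $\Lambda_N\setminus\Lambda_{N/2}$ near the outer one, inside the annulus; all the rerouting will take place in these two shells while the bulk of each arm is left untouched.

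I would carry out the proof in two stages. The first is \emph{arm separation}: I claim there is a constant $c_\sigma>0$ such that
$$\P[A^{\mathrm{w}}_\sigma(n,N)]\ge c_\sigma\,\P[A_\sigma(n,N)],$$
where $A^{\mathrm{w}}_\sigma(n,N)$ is the sub-event of $A_\sigma(n,N)$ on which one can select $j$ disjoint arms of the prescribed colours whose endpoints on $\partial\Lambda_n$ are pairwise at distance $\ge c_\sigma n$, whose endpoints on $\partial\Lambda_N$ are pairwise at distance $\ge c_\sigma N$, and such that around each endpoint there is a free half-disc of radius of order $n$ (resp.\ $N$) through which the arm may later be extended without meeting the others. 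Informally, conditionally on the arms existing they are, up to a fixed multiplicative cost, already spread out and surrounded by room. The standard route to this estimate is to explore inward from both boundaries and, whenever two endpoints are found too close, to pry them apart using crossings and circuits of the appropriate colours (the \emph{fences} separating arms of different colours), each of which is produced at fixed cost by Theorem~\ref{RSW} and Corollary~\ref{any_shape}.

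The second stage is \emph{routing} into the targets. Working on $A^{\mathrm{w}}_\sigma(n,N)$, I would connect each separated endpoint to its prescribed interval inside the adjacent boundary shell. Since the realised endpoints and the target intervals $I_k$ (resp.\ $J_k$) are all separated at scale $n$ (resp.\ $N$) and appear in the same cyclic order, one can choose pairwise-disjoint corridors, each a region of aspect ratio $O(1)$, joining endpoint $k$ to $I_k$ (resp.\ $J_k$); inside each corridor a crossing of colour $\sigma_k$ exists with probability bounded below by $O(1)$ applications of Theorem~\ref{RSW} and Corollary~\ref{any_shape}. The corridors being disjoint, arms of distinct colours never interfere, and the Harris inequality allows one to demand all $2j$ routings simultaneously, at total cost $c_\sigma^{2j}$. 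Concatenating each arm with its two routing crossings produces arms realising $A^{I,J}_\sigma(n,N)$, whence $\P[A^{I,J}_\sigma(n,N)]\ge c_\sigma^{2j}\,\P[A^{\mathrm{w}}_\sigma(n,N)]\ge c'_\sigma\,\P[A_\sigma(n,N)]$, which is the desired bound with $C_\sigma=1/c'_\sigma$.

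The main obstacle is the arm-separation estimate in the polychromatic case: one must create the macroscopic gaps between endpoints while keeping arms of different colours from being inadvertently joined, which is exactly what the fence construction is designed to prevent. For a monochromatic colour sequence the separation is elementary and essentially amounts to gluing circuits as in Fig.~\ref{fig:gluing}; the general statement is the genuinely technical point, and rather than reproduce it I would invoke the detailed treatments of well-separated arms in \cite{Kes87,Nol08}.
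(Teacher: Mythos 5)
Your proposal is correct in outline, and in fact it contains strictly more than the paper does: the paper's entire ``proof'' of Proposition~\ref{landing} consists of the observation that the left inequality is trivial and a pointer to \cite{Nol08} (and \cite{Kes87}) for the rest. Your two-stage decomposition --- first a separation statement ($A^{\mathrm{w}}_\sigma$, arms with endpoints spread at the ambient scale and surrounded by free room, at bounded multiplicative cost), then a routing step that extends each separated endpoint into its prescribed interval through disjoint corridors of bounded aspect ratio --- is exactly the structure of the argument in the references the paper cites, and like the paper you defer the genuinely hard ingredient (polychromatic arm separation via fences) to those same references. So the two treatments are essentially the same, with yours being the more explicit.

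One technical slip worth fixing in your routing step: you invoke the Harris inequality to ``demand all $2j$ routings simultaneously,'' but the routing events are of mixed monotonicity --- open crossings are increasing while closed crossings are decreasing --- so FKG does not apply to their conjunction (indeed an increasing and a decreasing event are \emph{negatively} correlated under Harris). This is harmless here precisely because your corridors are pairwise disjoint: under the product measure, events supported on disjoint sets of sites are independent, which is all you need to multiply the $2j$ lower bounds. The place where real care is required is in combining these routings with the conditioned event $A^{\mathrm{w}}_\sigma(n,N)$, which is itself of mixed monotonicity; the standard resolution (Kesten, Nolin) is to build the free half-discs into the definition of the separation event so that it is measurable with respect to the sites \emph{outside} the corridors, and then conclude by independence rather than by any correlation inequality. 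With that adjustment your argument is the standard one.
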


Once again, only the second inequality is non trivial. We refer to
\cite{Nol08} for a comprehensive study.

\subsection{Universal arm exponents}

Before dealing with the computation of arm-exponents using SLE
techniques, let us mention that several exponents can be computed
without this elaborated machinery. These exponents, called
\emph{universal exponents}, are expected to be the same for a large
class of models, including the so-called random-cluster models with
cluster weights $q\le 4$ (see \cite{Gri06} for a review on the
random-cluster model).  In order to state the result, we need to
define arm events in the half-plane. Let $\mathbb H^+$ be the set of
vertices in $\mathbb H$ with positive second coordinate. For a color
sequence $\sigma$ of $j$ colors, define $A_{\sigma}^+(n,N)$ to be the
existence of $j$ disjoint paths in $(\Lambda_N\setminus\Lambda_n)\cap
\mathbb H^+$ from $\partial\Lambda_n\cap \mathbb H^+$ to
$\partial\Lambda_N\cap\mathbb H^+$, colored counterclockwise according
to $\sigma$.

\begin{theorem}\label{universal_exponent}
  For every $0<n<N$, there exist two constants $c,C\in(0,\infty)$ such that
  \begin{align*}
    &c\left(\frac nN\right)^{2}\le \P \big[A_{01001}(n,N)\big] \le C \left(\frac nN\right)^{2},\\
    &c\left(\frac nN\right)^{2}\le \P \big[A_{010}^+(n,N)\big] \le C \left(\frac nN\right)^{2},\\
    &c\frac nN~\le~\P \big[A_{01}^+(n,N)\big]~\le~C
    \frac nN.  \end{align*}
\end{theorem}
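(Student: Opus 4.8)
The plan is to prove, for each of the three events, matching upper and lower bounds of the announced order. What makes these exponents \emph{universal} is exactly that, unlike the one-arm ($5/48$) or four-arm ($5/4$) exponents, they can be pinned down \emph{exactly} and not merely up to $o(1)$, using only the ingredients already available: the RSW estimate (Theorem~\ref{RSW}) and its consequence for arbitrary shapes (Corollary~\ref{any_shape}), the Harris and BK (Reimer) inequalities, quasi-multiplicativity (Theorem~\ref{quasi-multiplicativity}) and the localization of arms (Proposition~\ref{landing}). In every case I would obtain the upper bound from a first-moment (counting) argument and the lower bound from an explicit construction realizing the event; the upper bounds are where the exact value of the exponent is forced, and they are the heart of the matter.

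I would begin with the whole-plane five-arm event $A_{01001}$, for which the exponent $2$ should be read as the dimension of the plane. Tile the central ball $\Lambda_{N/2}$ by $\asymp (N/n)^2$ disjoint translates of $\Lambda_n$ centred at points $v_i$, and let $B_i$ be the event that $v_i$ carries a polychromatic five-arm configuration from $\partial\Lambda_n(v_i)$ out to $\partial\Lambda_N$. By translation and quasi-multiplicativity $\P(B_i)\asymp\P[A_{01001}(n,N)]$, so the expected number of such ``five-arm points'' is $\mathbb E[\#\{i:B_i\}]\asymp (N/n)^2\,\P[A_{01001}(n,N)]$. The decisive input is then a purely topological lemma: since an open path and a closed path cannot cross on a triangulation, two centres that each send five arms of both colours all the way to $\partial\Lambda_N$ obstruct one another, and one checks that \emph{deterministically} at most a bounded number of the $B_i$ can hold simultaneously. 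This gives $\mathbb E[\#]\le C$ and hence $\P[A_{01001}(n,N)]\le C'(n/N)^2$. For the matching lower bound I would argue that $\mathbb E[\#]\ge c$, i.e.\ that a polychromatic five-arm point appears in $\Lambda_{N/2}$ with probability bounded below, the construction being assembled from open and closed crossings of well-separated sectors glued by Harris.

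For the two half-plane events I would run the same scheme, now with the arm-points constrained to the one-dimensional boundary and with the colour symmetry of $\P$ used to interchange open and closed at will. For $A_{01}^+$ (exponent $1$), counting boundary points that carry one open and one closed arm to distance $N$ gives $\mathbb E[\#]\asymp (N/n)\,\P[A_{01}^+(n,N)]$, so the exponent $1$ falls out of $\mathbb E[\#]\asymp 1$; here the upper bound $\mathbb E[\#]\le C$ comes from controlling the expected number of \emph{disjoint} crossings of the outer shell $\Lambda_N\setminus\Lambda_{N/2}$ (disjointness via Reimer/BK, the bound on disjoint crossings via RSW), and the lower bound again from a boundary construction. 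The three-arm event $A_{010}^+$ (exponent $2$) is the most delicate: the naive boundary count gives $\mathbb E[\#]\to 0$, so even existence is rare, and the extra power of $n/N$ beyond the two-arm event has to be produced separately (morally $2=1+1$). I would get its upper bound by refining the disjoint-crossing count, or via the lowest open crossing of a box, whose returns to the boundary flanked by closed defects reaching scale $N$ are precisely the three-arm points; the lower bound again by direct construction. For the full combinatorial bookkeeping, in particular the well-separation needed to glue constructions and to keep the counts tight, I would lean on Proposition~\ref{landing} and refer to \cite{Nol08}.

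The main obstacle is the exactness of the exponents, and it appears in two places. On the upper side it is the deterministic topological count for the five-arm event (conceptually clean, but genuinely the crux) and, for the half-plane three-arm event, the correct extraction of the \emph{second} factor $n/N$, since that case is not of the clean $\mathbb E[\#]\asymp 1$ type and does not reduce to a one-dimensional first moment. On the lower side the difficulty is that a naive iteration of RSW across the $\log(N/n)$ scales only yields a power law with an \emph{uncontrolled} exponent, strictly larger than the target in general; the constructions must therefore be engineered, using the well-separation of Proposition~\ref{landing}, so that the matching first moment is tight and the exponent produced is exactly $2$, $2$ and $1$ respectively.
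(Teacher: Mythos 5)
Your overall architecture (first moment plus localization for the upper bounds, explicit construction plus union bound for the lower bounds, quasi-multiplicativity to pass from radii $(1,N)$ to $(n,N)$) matches the paper's, but the two steps that carry the entire proof are both flawed as stated. The ``purely topological lemma'' behind your upper bound is false: without prescribed landing arcs, the number of sites sending five disjoint arms of cyclic color pattern $01001$ to $\partial\Lambda_N$ is \emph{not} deterministically bounded. Indeed, color the lines of $\mathbb T$ in the direction $e^{i\pi/3}$ alternately open and closed; every site $x$ on an open line then has five disjoint arms reaching any prescribed distance --- open up and open down inside its own line, closed up and closed down in the line to its left, closed (say down) in the line to its right --- whose cyclic color sequence $(1,0,0,1,0)$ is exactly $01001$ read cyclically. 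In this single configuration, of order $(N/n)^2$ of your events $B_i$ hold simultaneously, so the inequality $\mathbb E[\#]\le C$ has no derivation along your lines. What makes the count work in the paper is precisely what you relegate to ``bookkeeping'': one marks five \emph{disjoint boundary arcs} of an $N\times N$ rectangle in the correct cyclic order (Fig.~\ref{fig:five_arm}), observes that at most \emph{one} site can be joined to these prescribed arcs by disjoint arms of the prescribed colors (two such sites would force an open and a closed arm to cross, impossible on a triangulation), and only then invokes Proposition~\ref{landing} to say that imposing the landing arcs changes the arm probability by a multiplicative constant. The landing arcs are the crux of the exactness of the exponent, not an afterthought; my counterexample shows the unrestricted statement you need is simply untrue.

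Your lower bound is also broken: ``open and closed crossings of well-separated sectors glued by Harris'' cannot work, because Harris only glues events of the \emph{same} monotonicity. An open arm is increasing and a closed arm is decreasing, and for such a mixed pair FKG gives $\P(A\cap B)\le\P(A)\P(B)$, the wrong direction; independence is unavailable too, since all five arms must meet in a common microscopic neighborhood of the candidate site. This is why the paper produces the closed arms by \emph{duality from extremality} rather than by gluing: it takes the highest horizontal open crossing $\Gamma$ of the $N\times N$ box and the right-most open connection $\Gamma'$ from $\Gamma$ to the bottom side; the absence of an open path above $\Gamma$ (resp.\ to the right of $\Gamma'$) forces closed paths there, so the intersection site of $\Gamma$ and $\Gamma'$ is a five-arm site with probability bounded below via Corollary~\ref{any_shape}, and the union bound then yields the lower bound on the point probability. (Your instinct in the half-plane discussion --- the lowest crossing, and the correct remark that the three-arm case is not a one-dimensional first-moment statement --- points at exactly this extremal-path mechanism; it, and not Harris, is what makes both the half-plane bounds and the five-arm lower bound rigorous.)
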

The three computations are based on the same type of ingredient, and
we refer to \cite{Wer09} for a complete derivation. An important
observation is that the proof of the above is based only on
Theorem~\ref{quasi-multiplicativity}, Proposition~\ref{landing} and
crossing estimates (Corollary~\ref{any_shape}). It does not require conformal invariance.
\begin{figure}[ht!]
  \begin{center}
    \includegraphics[width=0.50\textwidth]{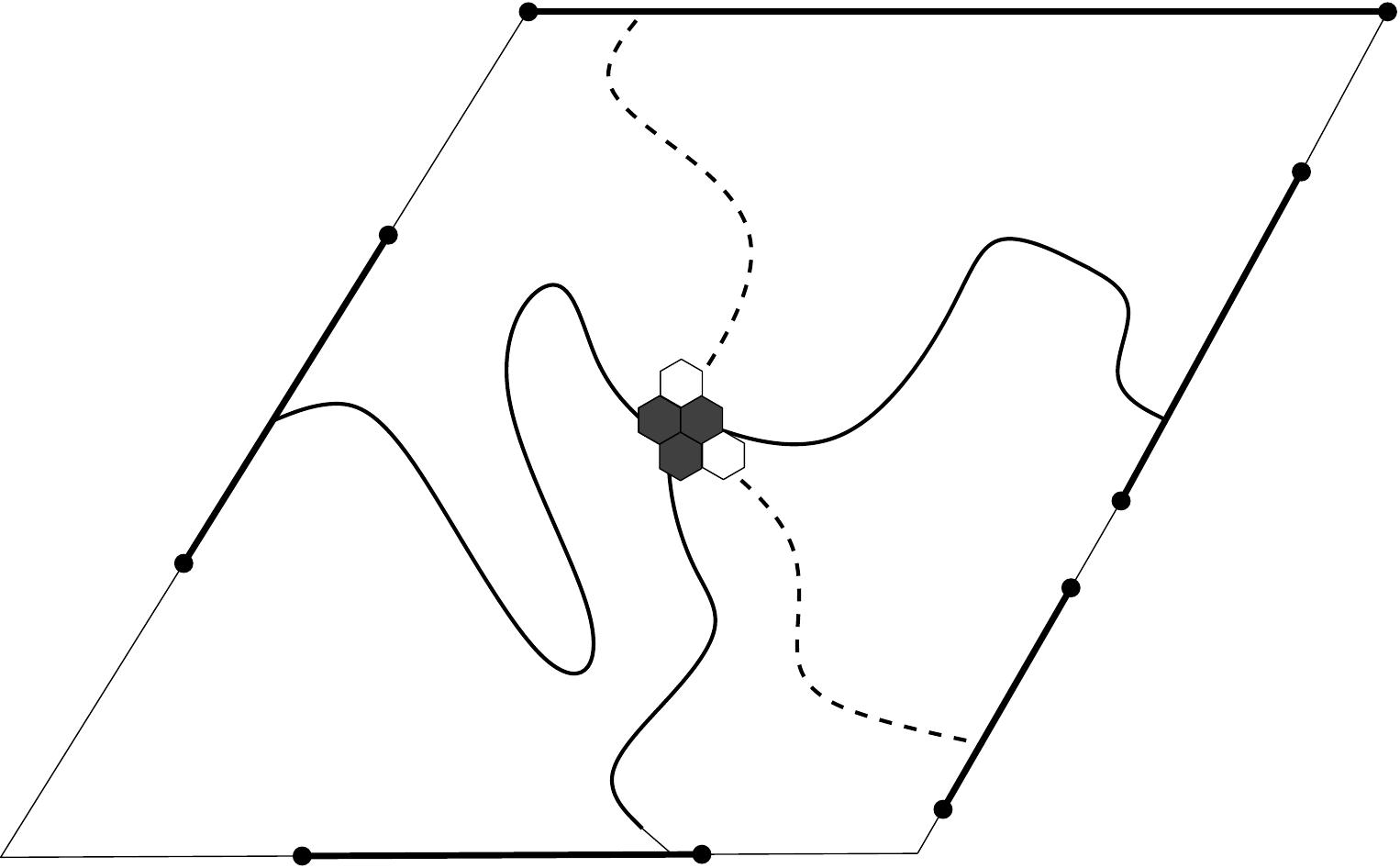}
  \end{center}
  \caption{\label{fig:five_arm} The landing sequences used to prove an upper bound on the probability of the 5-arm event. At most one site with five such arms exists.}
\end{figure}

\begin{proof}
  We only give a sketch of the proof of the first statement; the others are derived from similar arguments.

  Consider percolation in a large $N\times N$ rectangle $R$, and mark
  five boundary intervals according to Fig.~\ref{fig:five_arm}. It is
  easy to check that there is at most one site in the rectangle which
  is connected to these boundary arcs by disjoint arms of the depicted
  colors; in other words, the expected number of such hexagons is at
  most $1$. On the other hand, by arm localization, the probability
  for each of the hexagons in the middle $(N/3) \times (N/3)$
  rectangle $R'$ to exhibit $5$ such arms is given up to
  multiplicative constants by the probability of $5$ arms between
  radii $1$ and $N$, leading to the upper bound $$\P
  \big[A_{01001}(1,N)\big] \le C \left(\frac 1N\right)^{2}.$$

  To get the corresponding lower bound, we need to show that such a hexagon
  can be found with positive probability within $R'$, and this in turn is a
  consequence of crossing estimates (Corollary~\ref{any_shape}). One way to proceed is as follows. Let
  $\Gamma$ be the highest horizontal, open crossing of $R$, provided such a
  crossing exists (which occurs with positive probability by Corollary~\ref{any_shape}).
  $\Gamma$ goes through $R'$ with positive probability, and by definition, any
  hexagon on $\Gamma$ is connected to the top side of $R$ by a closed path. On
  the other hand, with positive probability, $\Gamma$ itself is connected to the
  bottom side of $R$ by an open path; let $\Gamma'$ be the right-most such path,
  and let $X$ be the hexagon at which $\Gamma$ and $\Gamma'$ intersect. Still
  with positive probability from Corollary~\ref{any_shape}, $X\in R'$; and the absence
  of an open path further to the right imposes the existence of a closed path
  below $\Gamma$, connecting a neighbor of $X$ to the right-hand edge of $R$.
  Collecting all the information given by the construction, we see that from $X$
  start five macroscopic disjoint arms of the same colors as in
  Fig.~\ref{fig:five_arm}, from which the lower bound follows: $$c\left(\frac
  1N\right)^{2}\le \P \big[A_{01001}(1,N)\big].$$

  Similar bounds for $\P \big[A_{01001}(n,N)\big]$ may then be obtained invoking quasi-multiplicativity (Theorem~\ref{quasi-multiplicativity}), thus ending the argument.
\end{proof}

With the Reimer inequality (see the introduction), the first
inequalities in the previous result imply several interesting
inequalities on arm-exponents. For instance,
$$\Pp\big[A_{\sigma 1}(n,N)\big] \le
\Pp\big[A_{\sigma}(n,N)\big]\cdot\Pp\big[A_1(n,N)\big].$$ Since
$\Pp[A_1(n,N)]~\ge~(n/N)^\alpha$ for some constant $\alpha>0$, we
deduce from the previous theorem that
\begin{align}
  \label{exponents_in}\Pp\big[A_{1010}(n,N)\big]\ge(n/N)^{2-\alpha}\text{~and~}
  \Pp\big[A_{101010}(n,N)\big]\le(n/N)^{2+\alpha}.
\end{align}
These bounds are crucial for the study of the dynamical percolation
\cite{Gar10}, the scaling relations, and for the alternative proof of convergence to SLE(6)
presented in \cite{Wer09}.

%

%

\subsection{Critical arm exponents}\label{sec:exponents}

The fact that the driving process of $\SLE$ is a Brownian motion paves
the way to the use of techniques such as stochastic calculus
in order to study the properties of $\SLE$ curves. Consequently,
$\SLE$s are now fairly well understood. Path properties have been
derived in \cite{RS05}, their Hausdorff dimension can be computed
\cite{Bef04,Bef08}, etc. In addition to this, several critical
exponents can be related to properties of the interfaces, and thus be
computed using $\SLE$.

A color-switching argument very similar to the one harnessed in
Lemma~\ref{color_switching} shows that when one exponent
$\alpha_\sigma$ exists for some polychromatic sequence $\sigma$ (here
polychromatic means that the sequence contains at least one 0 and one
1), the exponents $\alpha_{\sigma'}$ exist for every polychromatic
sequence $\sigma'$ of the same length as $\sigma$, and furthermore
$\alpha_{\sigma'}=\alpha_\sigma$. From now on, we set $\alpha_j$ to be
the exponent for polychromatic sequences of length $j$. By extension,
we set $\alpha_1$ to be the exponent of the one-arm event.

\begin{theorem}[\cite{LSW02c,SW01}]
  \label{exponents}
  The exponents $\alpha_j$
  exist. Furthermore, $$\alpha_1=\frac{5}{48}\text{ and }\alpha_j =
  \frac{j^2-1}{12}\text{ for }j>1.$$
\end{theorem}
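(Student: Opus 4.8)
The plan is to compute the critical arm exponents $\alpha_j$ by transferring the problem to $\SLE(6)$, whose conformal-invariance and stochastic-calculus machinery make such computations tractable. The starting point is Theorem~\ref{thm:SLE}: the exploration path $\gamma_\delta$ converges to $\SLE(6)$, and arm-events at criticality can be rephrased in terms of crossing and non-crossing properties of the limiting interface. The general strategy, following \cite{SW01,LSW02c}, is to express each polychromatic arm-event probability $\P[A_\sigma(n,N)]$ as (up to constants, using Theorem~\ref{quasi-multiplicativity} and Proposition~\ref{landing}) the probability that an $\SLE(6)$ started in an annular configuration achieves a prescribed number of crossings, and then to extract the power-law exponent from the known behaviour of $\SLE(6)$ near a boundary point.

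First I would treat the one-arm exponent $\alpha_1=5/48$ separately. The one-arm event $\{0\leftrightarrow\partial\Lambda_N\}$ is \emph{monochromatic}, so the color-switching argument does not directly reduce it to the polychromatic family; instead one relates it to the probability that the trace of $\SLE(6)$, or more precisely the outer boundary of a critical cluster, reaches a given scale. The cleanest route is Lawler--Schramm--Werner's computation of the one-arm (or radial) exponent for $\SLE(6)$ via a radial parametrization: one writes a diffusion for the image of a marked boundary point under the radial Loewner flow, identifies the relevant eigenfunction of the generator of this diffusion, and reads off the exponent $5/48$ as the leading power. For the polychromatic exponents $\alpha_j=(j^2-1)/12$ with $j>1$, the method is to set up a half-plane or annulus crossing event with $j$ alternating interfaces, encode the configuration of the $j$ curves' endpoints via the $\SLE(6)$ driving process and its images $g_t(x_i)-W_t$, and derive a Bessel-type SDE whose boundary exponent governs the decay. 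The derivation mirrors the martingale computation already carried out in Proposition~\ref{identification}: one finds a function of the driving process that is a local martingale, solves the resulting hypergeometric/Cauchy--Euler ODE, and matches the homogeneous power-law solution to the crossing event.

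The \textbf{main obstacle} will be the passage from the discrete arm-events to clean $\SLE(6)$ quantities. Convergence of a single interface (Theorem~\ref{thm:SLE}) does not by itself control $j$ simultaneous disjoint macroscopic arms of prescribed colors; one needs the separation-of-arms technology underlying Theorem~\ref{quasi-multiplicativity} and Proposition~\ref{landing} to guarantee that the arms can be realized with well-separated, localized landing points, so that the discrete event converges to an event measurable with respect to finitely many $\SLE(6)$ interfaces. This interface-counting and the attendant uniform estimates — ensuring that $o(1)$ errors do not accumulate as $n/N\to 0$ and that the limiting crossing probability has a genuine power-law rather than merely being pinned between two powers — is where the real work lies. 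Once the event is expressed in terms of the $\SLE(6)$ flow, the remaining step is essentially a stochastic-calculus exercise: identifying the correct diffusion, computing its speed and scale, and solving the associated ODE to produce $\alpha_1=5/48$ and $\alpha_j=(j^2-1)/12$. The color-switching argument recalled just before the statement then guarantees that the polychromatic exponent depends only on the length $j$, so it suffices to compute one representative sequence of each length.
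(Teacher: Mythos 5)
Your proposal follows essentially the same route as the paper: reduce the discrete arm-events to $\SLE(6)$ quantities (radial $\SLE(6)$ for the one-arm exponent, annulus crossing counts of the exploration process for $j>1$), invoke the Lawler--Schramm--Werner computations for the $\SLE$ exponents, use color-switching to reduce to one representative sequence per length, and use quasi-multiplicativity together with arm-separation/landing estimates to stitch scales so that multiplicative-constant errors do not destroy the power law. The paper makes this last step precise by iterating Theorem~\ref{quasi-multiplicativity} along scales $R^k$ and letting $R\to\infty$ so that the accumulated error $n\log C$ is negligible against $n\log R$, which is exactly the obstacle you correctly identify as the crux.
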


The proof of this is heavily based on the use of Schramm--Loewner
Evolutions. We sketch the proof and we refer the reader to existing
literature on the topic for details \cite{LSW02c,SW01}. The argument
is two-fold. First, arm-exponents can be related to the corresponding
exponents for $\SLE$. And second, these exponents can be computed
using stochastic and conformal invariance techniques. We will not
describe the second step, since the computation can be found in many
places in the literature already, and it would bring us far from our
main subject of interest in this review.

\begin{lemma}
  Let $\sigma$ be a polychromatic sequence of length $j$. For $R>1$,
  $\P[A_\sigma(m,Rm)]$ converges as $m$ goes to $\infty$ to a quantity
  which will be denoted by $P[A^{\SLE_6}_\sigma(1,R)]$ (see the proof
  for a description). Furthermore, $$\lim_{R\rightarrow
    \infty}\frac{\log P[A^{\SLE_6}_\sigma(1,R)]}{\log R} =
  - \begin{cases}
    \frac{j^2-1}{12}&\text{if $j>1$,}\\
    \frac5{48}&\text{if $j=1$.}
  \end{cases}$$
\end{lemma}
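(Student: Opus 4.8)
The plan is to split the statement into an analytic part---the existence of $\lim_{m\to\infty}\P[A_\sigma(m,Rm)]$ and its identification with an $\SLE_6$ quantity---and a computational part, the evaluation of the resulting exponent, which I would reduce to the $\SLE_6$ literature. Throughout, the three pillars are the convergence of a single interface to $\SLE_6$ (Theorem~\ref{thm:SLE}), its loop counterpart $\CLE_6$ \cite{CN06}, and the quantitative crossing toolbox (Theorem~\ref{RSW}, Theorem~\ref{quasi-multiplicativity}, Proposition~\ref{landing}).

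For the convergence, I would first rescale by $1/m$, so that $A_\sigma(m,Rm)$ becomes an event about the fixed annulus $\{1\le|z|\le R\}$ for percolation of mesh $1/m$. The difficulty is that an arm event is not a continuity set for the scaling-limit topology: arms that barely cross, or landing points that accumulate, create configurations in which the limiting arm count is ambiguous. To handle this I would pass to the $\delta$-well-separated version $A^{I,J}_\sigma$ of Proposition~\ref{landing}: by arm localization together with quasi-multiplicativity (Theorem~\ref{quasi-multiplicativity}) the two probabilities are comparable up to constants depending only on $\sigma$ and $\delta$, while the well-separated event is robust under small perturbations of the interfaces near $\partial\Lambda_1$ and $\partial\Lambda_R$. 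A polychromatic arm pattern is then encoded by the interfaces separating consecutive arms of different colors, so the event is a measurable function of the $\CLE_6$ loop configuration in the annulus.

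Next I would argue that this function is continuous at $\CLE_6$-almost every configuration. Because $\SLE_6$, and hence $\CLE_6$, curves cross a fixed annulus transversally and do not produce tangential or pinched crossings with positive probability, the set of configurations for which the prescribed number of disjoint colored crossings is ambiguous has zero measure; this is the continuum analogue of the RSW estimate that makes near-critical crossing configurations negligible, and it is the most delicate point to make rigorous. Granting it, the portmanteau theorem applied to the weak convergence of the discrete configurations to $\CLE_6$ yields $\P[A_\sigma(m,Rm)]\to P[A^{\SLE_6}_\sigma(1,R)]$, where the right-hand side is defined as the $\CLE_6$-probability of the corresponding crossing event in $\{1\le|z|\le R\}$.

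Finally, for the exponent, the continuum probabilities inherit quasi-multiplicativity, $P[A^{\SLE_6}_\sigma(1,R_1R_2)]\asymp P[A^{\SLE_6}_\sigma(1,R_1)]\,P[A^{\SLE_6}_\sigma(1,R_2)]$, either by passing to the limit in Theorem~\ref{quasi-multiplicativity} or directly from the conformal Markov property of $\CLE_6$; a Fekete-type argument applied to $\log P$ then gives the existence of $\lim_{R\to\infty}\log P[A^{\SLE_6}_\sigma(1,R)]/\log R$. To evaluate it I would realize the crossing event through a radial $\SLE_6$ exploration of the annulus, track the driving process together with the angular positions of the competing arms as a diffusion, and identify the exponent as the leading eigenvalue of the associated generator via an explicit (hypergeometric) eigenfunction. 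This last step is precisely the $\SLE_6$ computation of \cite{LSW02c,SW01}, giving $\frac{j^2-1}{12}$ for $j>1$ and $\frac5{48}$ for $j=1$, which I would cite rather than reproduce. The main obstacle within the part I actually carry out is the continuity-set claim above, namely controlling the probability of ambiguous near-boundary arm configurations uniformly in the mesh.
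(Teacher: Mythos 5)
Your route to the convergence statement is genuinely different from the paper's. The paper never goes through $\CLE(6)$ here: it encodes the arm event directly in terms of a single \emph{radial} exploration process in $(R\Lambda)_\delta$ (turn left on open hexagons, right on closed ones, reflect off the boundary so as to keep the origin reachable), invokes the convergence of that exploration to radial $\SLE_6$ from \cite{LSW02c}, and identifies $A_\sigma(m,Rm)$ with an event of the exploration itself (``no counterclockwise loop closed before reaching $\Lambda$'' for $j=1$, ``$j$ inward crossings of the annulus'' for alternating colors). Your route through the full scaling limit \cite{CN06} and the portmanteau theorem is a legitimate alternative, and your Fekete-type argument for the existence of the $R\to\infty$ limit (via continuum quasi-multiplicativity) is arguably cleaner than the paper's, which simply cites the asymptotics of \cite{LSW02c,SW01}. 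Both proofs delegate the actual exponent computation to the same references, so that part matches.

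There is, however, a concrete gap in your convergence step. Proposition~\ref{landing} gives only comparability up to a multiplicative constant: $\P[A^{I,J}_\sigma(n,N)]\le\P[A_\sigma(n,N)]\le C_\sigma\P[A^{I,J}_\sigma(n,N)]$. So even if you prove that $\P[A^{I,J}_\sigma(m,Rm)]$ converges as $m\to\infty$, you only learn that $\P[A_\sigma(m,Rm)]$ stays within constant factors of a convergent sequence; that suffices for the exponent (the second assertion) but does not prove convergence of $\P[A_\sigma(m,Rm)]$ itself (the first assertion). What you need instead is the \emph{strong} separation statement underlying Proposition~\ref{landing} (Kesten's separation lemmas, see \cite{Kes87,Nol08}): conditionally on $A_\sigma(m,Rm)$, the arms are $\delta$-well-separated at both boundary circles with conditional probability at least $1-\epsilon(\delta)$, uniformly in $m$, with $\epsilon(\delta)\to0$ as $\delta\to0$. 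That statement lets you sandwich $A_\sigma$ between events that are continuity sets for the scaling limit, with errors vanishing as $\delta\to0$, after which the portmanteau argument goes through. Your remaining unproved claim --- that tangential or pinched crossings have probability zero for the limiting object --- is indeed the crux, as you flag; leaving it at that is comparable to the paper's own level of sketchiness, since the paper too defers the hard work to \cite{LSW02c,SW01,Wer04,Wer09}.
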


\begin{proof}[Proof (sketch)]
  Let us first deal with $j=1$. Let $\Lambda$ be the box centered at
  the origin with hexagonal shape and edge-length 1. Consider a
  exploration process in the discrete domain $(R\Lambda)_\delta$
  defined as follows:
  \begin{itemize}
  \item It starts from the corner $R$.
  \item Inside the domain, the exploration $\gamma$ turns left when it
    faces an open hexagon, and right otherwise.
  \item On the boundary of $(R\Lambda)_\delta\setminus \gamma$,
    $\gamma$ carries on in the connected component of
    $(R\Lambda)\setminus \gamma$ containing the origin (it always
    bumps in such a way that it can reach the origin eventually).
  \end{itemize}
  The existence of an open path from $\partial\Lambda$ to
  $\partial(R\Lambda)$ corresponds to the fact that the exploration
  does not close any counterclockwise loop before reaching $\Lambda$.

  It can be shown that the exploration $\gamma$ converges to a
  so-called radial $\SLE_6$ \cite{LSW02c}, so that the probability of
  $\smash{\P} [A_1(m,Rm)]$ converges to the probability that such a
  $\SLE_6$ does not close counterclockwise loops before reaching
  $\Lambda$ (denote this probability by $P[A^{\SLE_6}_1(1,R)]$). This
  quantity has been computed in \cite{LSW02c} and has been proved to satisfy
  $$\frac{\log P[A^{\SLE_6}_1(1,R)]}{\log R}\rightarrow
  -\frac{5}{48}\text{ as }R\text{ goes to }\infty,$$
  thus concluding the proof in this case.

  Let us now deal with $\alpha_j$ for $j>1$ even. Let us consider the
  case of the sequence of alternative colors $\sigma$ with length $j$
  (we do not loose any generality since all the polychromatic
  exponents with the same number of colors are equal). In terms of the
  exploration path, the event $A_\sigma(m,Rm)$ corresponds to the
  exploration process doing $j$ inward crossings of the annulus
  $(R\Lambda)\setminus \Lambda$. The probability of the event for
  $\SLE_6$, called $A_\sigma^{\SLE_6}(1,R)$, was also estimated in
  \cite{LSW01a,LSW01b} and has been proved to satisfy
  $$\frac{\log P[A^{\SLE_6}_\sigma(1,R)]}{\log R}\rightarrow
  - \frac {j^2-1}{12}\text{ as }R\text{ goes to }\infty,$$ thus concluding
  the proof in this case. The case of $j$ odd can also be handled similarly. Let us
  mention that the previous paragraphs constitute a sketch of proof
  only, and the actual proof is fairly more complicated, we refer to
  \cite{LSW02c,SW01} (or \cite{Wer04,Wer09}) and the references therein
  for a full proof.
\end{proof}

%

We are now in a position to prove Theorem~\ref{exponents}.

\begin{proof}[Proof of Theorem~\ref{exponents}]

  Fix $\ep>0$. It is sufficient to study the convergence along
  integers of the form $R^n$ since Corollary~\ref{any_shape} enables one to relate the
  probabilities of $A_\sigma(1,R^n)$ to the one of $A_\sigma(1,N)$ for
  any $R^n\le N<R^{n+1}$. Using Theorem~\ref{quasi-multiplicativity}
  iteratively, there exists a universal constant $C>1$ independent of $R$ such that for
  any $n$,
  \begin{equation}\label{qu}
    \left|\log \P[A_\sigma(R^n)]-\sum_{k=0}^{n-1}\log
      \P[A_\sigma(R^k,R^{k+1})]\right|\le n\log C.
  \end{equation}
  The previous lemma implies that $\P[A_\sigma(m,Rm)]$ converges as $m$
  goes to $\infty$. Therefore,
  $$\frac{1}{n}\sum_{k=0}^{n-1}\log
  \P[A_\sigma(R^k,R^{k+1})]\longrightarrow \log
  P[A^{\SLE_6}_\sigma(1,R)]\text{ as }n\text{~goes~to~}\infty.$$ Now,
  let $R$ be large enough that $\log C/\log R\le \ep/2$. The statement
  follows readily by dividing \eqref{qu} by $n\log R$ and plugging the
  previous limit into it.
\end{proof}

\subsection{Fractal properties of critical percolation}

Arm exponents can be used to measure the Hausdorff dimension of sets
describing critical percolation clusters. A set $S$ of vertices of the
triangular lattice is said to have dimension $d_S$ if the density of
points in $S$ within a box of size $n$ behaves as $n^{-x_S}$, with $x_S
= 2 - d_S$.  The codimension $x_S$ is related to arm exponents in many
cases:
\begin{itemize}
\item The $1$-arm exponent is related to the existence of long
  connections, from the center of a box to its boundary. It measures
  the Hausdorff dimension of big clusters, like the incipient infinite
  cluster (IIC) as defined by Kesten \cite{Kes86}. For instance, the
  IIC has a Hausdorff dimension equal to $2-5/48 =91/48$.
\item The monochromatic $2$-arm exponent describes the size of the
  \emph{backbone} of a cluster. It can be shown using the BK
  inequality that this exponent is strictly smaller than the one-arm
  exponent, hence implying that this backbone is much thinner than the
  cluster itself. This fact was used by Kesten \cite{Kes86} to prove
  that the random walk on the IIC is sub-diffusive (while it has been
  proved to converge toward a Brownian Motion on a supercritical
  infinite cluster, see \cite{BB07,MP07} for instance).
\item The polychromatic $2$-arm exponent is related to the boundary
  points of big clusters, which are thus of fractal dimension
  $7/4$. This exponent can be observed experimentally on interfaces
  (see \cite{DSB03,SRG85} for instance).
\item The 4-arm exponent with alternating colors counts the pivotal
  sites (see the next section for more information). The dimension of
  the set of pivotal sites is thus $3/4$. This exponent is crucial in
  the study of noise-sensitivity of percolation (see
  \cite{SS10,GPS10b} and references therein).
\end{itemize}

\section{The critical point of percolation and the near-critical regime}

We now move away from the critical regime and start to study site percolation with arbitrary $p$ (keeping in mind that we are mostly interested in the study of $p$ near $p_c=1/2$).
\subsection{Proof of Theorem~\ref{thm:Kesten}}

We are arriving at a milestone of modern probability, Kesten's
``$p_c=1/2$'' theorem (Theorem~\ref{thm:Kesten}). Originally, the
statement was proved in the case of bond percolation on the square
lattice, but the same arguments apply to site percolation on the
triangular lattice. Besides, the method we present here is not the
historical one, and was introduced by Bollob\'as and Riordan
\cite{BR06b}. Observe that Corollary~\ref{lower_bound_critical}
implies that $p_c\ge \frac12$; therefore, we only need to prove that
$p_c\le \frac 12$ to show Theorem~\ref{thm:Kesten} and we focus on
this assertion from now on.

From now on, $[0,n]\times[0,m]$ denotes the set of points of the form
$k\cdot 1+\ell\cdot e^{i\pi/3}$, with $0\le k\le n$ and $0\le \ell\le
m$.  Let us start by the following proposition asserting that if some
crossing probability is too small, then the probability of the origin
being connected to distance $n$ decays exponentially fast in $n$.

\begin{proposition}\label{exponential_decay}
  Fix $p\in (0,1)$ and assume there exists $L\in\mathbb N$ such that
  $$\Pp([0,L]\times[0,2L]\text{ is crossed horizontally})<
  \frac{1}{36 e}.$$
  Then for any $n\ge L$, $\Pp(0\leftrightarrow \partial
  \Lambda_n)\le 6\exp[-n/(2L)]$.
\end{proposition}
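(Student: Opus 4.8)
The plan is to run a Peierls-type, lattice-animal counting argument at the coarse scale $L$: I will turn a long open connection into a long self-avoiding chain of \emph{independent} rectangle-crossings, each of which is a copy of the unlikely event appearing in the hypothesis, and then sum over the combinatorial possibilities. Write $q:=\Pp([0,L]\times[0,2L]\text{ is crossed horizontally})<\tfrac{1}{36e}$. First I would fix a partition of $\mathbb T$ into cells of graph-diameter at most $2L$, each adjacent to at most $6$ others (one may take translates of a hexagon of radius $L$). Call a cell \emph{traversed} if it contains an open path joining two of its sides. By the symmetries of $\mathbb T$ (translations and rotations by multiples of $\pi/3$, under which $\Pp$ is invariant), the event that a given cell is traversed is dominated by a bounded number — at most $6$ — of translated and rotated copies of $\{[0,L]\times[0,2L]\text{ is crossed horizontally}\}$, each of probability $q$. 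Hence any fixed cell is traversed with probability at most $6q$.

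Next I would encode the connection combinatorially. If $0\leftrightarrow\partial\Lambda_n$, choose a self-avoiding open path $P$ from the origin to $\partial\Lambda_n$ and let $\Pi=(C_0,\dots,C_{\ell-1})$ be the sequence of distinct cells visited by $P$, in order. Then $\Pi$ is a self-avoiding path in the cell-adjacency graph (of degree at most $6$) issued from the cell of the origin; since $P$ reaches graph-distance $n$ and each cell has diameter at most $2L$, the union of the $C_i$ has diameter at least $n$, forcing $\ell\ge n/(2L)$. Each interior $C_i$ is traversed by $P$, and — this is the key point — because $P$ is self-avoiding, the sub-paths $P\cap C_i$ use pairwise \emph{disjoint} sets of sites; being all open, each such sub-path certifies the traversal of its cell. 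Thus the events $\{C_i\text{ traversed}\}$ occur disjointly, so the BK inequality (the traversal events being increasing and depending on finitely many sites) applies and gives, for each fixed admissible $\Pi$ of length $\ell$,
\[
  \Pp\big(\{C_0\text{ traversed}\}\circ\cdots\circ\{C_{\ell-1}\text{ traversed}\}\big)\ \le\ (6q)^{\ell}.
\]

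Finally I would sum over $\Pi$. The number of self-avoiding cell-paths of length $\ell$ from a fixed cell is at most $6\cdot 5^{\ell-1}\le 6^{\ell}$, so a union bound over $\Pi$, then over all $\ell\ge N:=\lceil n/(2L)\rceil$, yields
\[
  \Pp\big(0\leftrightarrow\partial\Lambda_n\big)\ \le\ \sum_{\ell\ge N} 6^{\ell}\,(6q)^{\ell}\ =\ \sum_{\ell\ge N}(36q)^{\ell}\ \le\ \frac{(36q)^{N}}{1-36q}.
\]
The threshold $q<\tfrac{1}{36e}$ is exactly what makes this work: it gives $36q<e^{-1}$, so the geometric series converges, $\frac{1}{1-36q}<\frac{e}{e-1}$, and $(36q)^{N}\le e^{-N}\le e^{-n/(2L)}$. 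Absorbing the harmless prefactor (and the rounding between $N$ and $n/(2L)$) into the constant delivers the announced bound $6\exp[-n/(2L)]$.

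I expect the main obstacle to be the geometric bookkeeping of the second step rather than the counting or the summation: one must set up the cells and the precise notion of ``traversal'' so that (i) a genuine open connection really does force every intermediate cell of $\Pi$ to be traversed in a way dominated by a copy of the hypothesized crossing event, and (ii) the certifying sub-paths are disjoint, legitimizing the use of the BK inequality. The handling of the two endpoint cells (where $P$ may start or stop inside a cell) and the verification that the covering of a cell-traversal by copies of $[0,L]\times[0,2L]$ costs only the claimed factor are the delicate points; the symmetry reduction, the self-avoiding-walk count $6^{\ell}$, and the summation are routine, and the numerology ($36=6\times 6$ for the two nested union bounds, plus the factor $e$ for the tail of the series) is precisely what the constant $1/(36e)$ is calibrated to absorb.
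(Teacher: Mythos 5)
Your argument fails at precisely the step you flagged as ``geometric bookkeeping'', and the failure is not bookkeeping but a genuine obstruction: the claim that a traversed cell is dominated by at most $6$ copies of the hypothesized crossing event, hence has probability at most $6q$, is false because of corner-clipping. With ``traversed'' defined as ``contains an open path joining two of its sides'', a hexagonal cell of radius $L$ is traversed as soon as a single site sitting in one of its corners is open, since that site is adjacent to two \emph{adjacent} sides of the cell; this configuration implies no short-way crossing of any $L\times 2L$ rectangle, and its probability is bounded below by a constant depending only on $p$ (at least $p$), not by anything proportional to $q$. So $\Pp(\text{cell traversed})\le 6q$ cannot hold. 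Nor can you repair this by redefining traversal: if you require the path to join two \emph{opposite} (or non-adjacent) sides, or to contain a genuine short-way crossing of an $L\times2L$ rectangle inside the cell, then the certification step fails instead, because a self-avoiding path passing through a cell may perfectly well enter and exit through adjacent sides and certify nothing of small probability. One can indeed prove that any self-avoiding path of diameter much larger than $L$ must contain disjoint short-way crossings of $L\times2L$ rectangles taken from a grid-aligned family (case analysis on the transversal fluctuation of strip crossings, using both orientations), but that family has far more than $6$ members near any given location, so the per-step cost becomes $Kq$ with an entropy/covering constant $K$ much larger than $36$; the hypothesis $q<1/(36e)$ then buys nothing, and the exponential rate degrades below $n/(2L)$ as well. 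Your numerology $36=6\times6$ is calibrated to constants that are not achievable. (A secondary, fixable, flaw: the sequence of distinct cells visited by $P$ in order of first visit need not form a self-avoiding path in the cell-adjacency graph, since $P$ may re-enter previously visited cells before reaching a new one; one must instead extract a cell-path inside the connected set of visited cells.)

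The paper's proof avoids both the corner problem and the entropy problem by never taking a union bound over coarse-grained paths. It establishes the recursion $\Pp\big([0,2m]\times[0,4m]\text{ crossed horizontally}\big)\le 36\,\Pp\big([0,m]\times[0,2m]\text{ crossed horizontally}\big)^2$, using the fact that a short-way crossing of the $2m\times4m$ rectangle forces two of a \emph{fixed} list of eight $m\times2m$ rectangles to be crossed the short way by disjoint paths (BK inequality; $36$ is the number of unordered, possibly equal, pairs of the eight rectangles). Iterating this doubling gives $36u_k\le(36u_0)^{2^k}<e^{-2^k}$ for the crossing probability $u_k$ at scale $2^kL$, and the conclusion follows since $\{0\leftrightarrow\partial\Lambda_n\}$ forces one of six fixed $n\times2n$ rectangles to be crossed the short way. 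Because the rectangles are fixed in advance, there is no entropy to pay, and because a full short-way crossing of a large rectangle is forced (rather than a mere visit to a cell), there is no corner-clipping loophole; this is what makes the constant $1/(36e)$ genuinely sufficient. To salvage a single-scale Peierls argument of your type, you would need either a much stronger hypothesis ($q$ smaller than the reciprocal of the true entropy constant) or a preliminary scale-doubling step to manufacture such a $q$ --- at which point you would have reproduced the paper's argument.
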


The previous proposition, in conjunction with the fact that, for
$p>1/2$, the probability of having a closed crossing of
$[0,2n]\times[0,n]$ tends to zero as $n$ tends to infinity (this is
non-trivial and will be proved later), implies that $p_c\le \frac12$.
Indeed, assume that these probabilities tend to 0 for $p>1/2$, the
probability that there exists a closed circuit of length $n$
surrounding the origin is thus smaller than $(2n)^2\cdot 6e^{-n/(2L)}$
using the previous proposition for closed sites instead of open
ones. The Borel-Cantelli Lemma implies that there exists almost surely
only a finite number of closed circuits surrounding the origin. As a
consequence, there exists an infinite open cluster almost surely. If
this is true for any $p>1/2$, it means that $p_c\le \frac12$.

\begin{proof}
  Let $m>0$ and consider the rectangles
  \medbreak
  \begin{tabular}{ll}
    $R_1=[0,m]\times[0,2m],$ & $R_2=[0,m]\times[m,3m], $\\
    $R_3=[0,m]\times[2m,4m],$ & $R_4=[m,2m]\times[0,2m],$\\
    $R_5=[m,2m]\times[m,3m],$ & $R_6=[m,2m]\times[2m,4m],$\\
    $R_7=[0,2m]\times[m,2m],$ & $R_8=[0,2m]\times[2m,3m]. $
  \end{tabular}

  \medbreak

  These rectangles have the property that whenever
  $[0,2m]\times[0,4m]$ is crossed horizontally, two of the rectangles
  $R_i$ (possibly the same) are crossed in the short direction by \emph{disjoint}
  paths. We deduce, using the BK inequality, that
  \begin{align*}
    \Pp\big([0,2m]\times[0,4m]&\text{~is crossed horizontally}\big)\\
    & \le
    36~\Pp\big([0,m]\times[0,2m]\text{~is crossed\ horizontally}\big)^2.
  \end{align*}
  Iterating the construction, we easily obtain that for every $k\ge
  0$,
  \begin{align*}
    36\Pp\big([0,2^km]\times[0,&2^{k+1}m]\text{~is
      crossed\ horizontally}\big)\\
    &\ \ \, \le \left(36\Pp\big([0,m]\times[0,2m]\text{~is
        crossed\ horizontally}\big)\right)^{2^k}.
  \end{align*}
  In particular, if $m=L$ and $36\Pp\big([0,n]\times[0,2n]\text{~is cros.\ hor.}\big)<1/e$, we
  deduce for $n=2^k L$:
  $$\Pp(0\leftrightarrow \partial \Lambda_n)~\le~6\Pp\big([0,n]\times[0,2n]\text{~is cros.\ hor.}\big)\le 6e^{-n/L}.$$
  We used the fact that at least one out of six rectangles with dimensions $n\times2n$ must be crossed in order for the origin to be connected to distance $n$. The claim follows for every $n$ by monotonicity.
\end{proof}

%
We now need to prove the following non-trivial lemma.

\begin{lemma}\label{lem:long_path}
  Let $p<1/2$, there exist
  $\ep=\ep(p)>0$ and $c=c(p)>0$ such that for every $n\ge 1$,
  \begin{equation}\label{long_path}
    \Pp\big([0,n]\times[0,2
    n]\text{~is crossed horizontally}\big) \le cn^{-\ep}.
  \end{equation}
\end{lemma}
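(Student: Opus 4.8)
The plan is to run a \emph{sharp-threshold} argument for the crossing event and to integrate the resulting differential inequality from $p$ up to $\tfrac12$. Write $f_n(q):=\mathbb P_q\big([0,n]\times[0,2n]\text{ is crossed horizontally}\big)$. This is an increasing event depending on finitely many sites, so Russo's formula applies and yields $f_n'(q)=\sum_x \mathbb P_q(x\text{ is pivotal})=\sum_x \mathrm{Inf}_x^q$, where $\mathrm{Inf}_x^q$ is the probability that $x$ is pivotal under $\mathbb P_q$. The intuition is that, near $q=\tfrac12$, this crossing probability jumps from near $0$ to a value bounded away from $1$ across a window of width $O(1/\log n)$, which forces polynomial smallness just below $\tfrac12$.

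The first step is a bound on influences that is \emph{uniform} over $q\in[p,\tfrac12]$. If $x$ is pivotal for the horizontal crossing, then $x$ is joined by open paths to both vertical sides of the rectangle; since these sides are at distance $n$ apart, at least one of the two arms reaches graph-distance $\ge n/2$ from $x$. Hence $\mathrm{Inf}_x^q\le \mathbb P_q\big(x\leftrightarrow\partial\Lambda_{\lfloor n/2\rfloor}(x)\big)$. As this is an increasing event and $q\le\tfrac12$, monotonicity bounds it by its value at $\tfrac12$, and Corollary~\ref{critical_exponent} gives $\mathrm{Inf}_x^q\le C n^{-\beta}$ for some $\beta>0$, uniformly in $x$ and in $q\le\tfrac12$. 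In particular $\max_x\mathrm{Inf}_x^q\le C n^{-\beta}$ throughout the range of integration. The point of this reduction to a \emph{monochromatic one-arm} estimate is that it avoids any off-critical polychromatic (four-arm) bound, which is not available here.

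Feeding $\max_x\mathrm{Inf}_x^q\le Cn^{-\beta}$ into the sharp-threshold inequality (Talagrand/BKKKL for monotone events) — the one genuinely non-elementary input — produces a constant $c_0=c_0(p)>0$ such that, for all $q\in[p,\tfrac12]$ and all large $n$,
\[
f_n'(q)\ \ge\ c_0\, f_n(q)\big(1-f_n(q)\big)\log n ,
\]
the constant staying bounded below because $[p,\tfrac12]$ is a compact subset of $(0,1)$. Rewriting this as a lower bound on the derivative of the log-odds and integrating gives
\[
\log\frac{f_n(1/2)}{1-f_n(1/2)}-\log\frac{f_n(p)}{1-f_n(p)}
=\int_p^{1/2}\frac{f_n'(q)}{f_n(q)\big(1-f_n(q)\big)}\,\dd q
\ \ge\ c_0\Big(\tfrac12-p\Big)\log n .
\]
By the RSW bound (Corollary~\ref{any_shape}) the crossing probability at criticality is bounded away from $1$, so $\log\frac{f_n(1/2)}{1-f_n(1/2)}\le M$ for some finite $M$. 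Therefore $\log\frac{f_n(p)}{1-f_n(p)}\le M-c_0(\tfrac12-p)\log n$, and since the right-hand side is eventually very negative we get $f_n(p)\le e^{M} n^{-c_0(1/2-p)}$. Setting $\ep=\ep(p):=c_0(\tfrac12-p)>0$ and enlarging the prefactor to absorb the finitely many small values of $n$ yields the asserted bound for every $n\ge1$.

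The hard part is entirely the sharp-threshold inequality: once it is granted, the argument is just Russo's formula, the monotone reduction of influences to a one-arm event via Corollary~\ref{critical_exponent}, and the crossing bound of Corollary~\ref{any_shape}. The two points requiring care are the \emph{uniformity} of the threshold constant $c_0(p)$ over the closed interval $[p,\tfrac12]$ (guaranteed by compactness in $(0,1)$) and the uniformity of the influence bound over that same interval (guaranteed by monotonicity, since $q\le\tfrac12$). These are exactly what convert the qualitative threshold phenomenon into the quantitative polynomial rate $n^{-\ep}$.
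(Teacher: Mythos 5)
Your proof is correct, but it takes a genuinely different route from the paper's. The paper only has at its disposal the BKKKL statement (Theorem~\ref{maximum_influence}), which asserts the existence of a \emph{single} influential site; since the planar crossing event is not symmetric under translations, this cannot be combined with Russo's formula directly, and the paper circumvents the problem by symmetrizing: it introduces a translation-invariant event $B$ on the torus $\mathbb{T}_{4n}$ (existence of a closed vertical crossing of \emph{some} rectangle of dimensions $(n/2,4n)$), for which all sites have equal influence, derives the integrated sharp-threshold bound \eqref{important} for $B^c$, and then transfers the conclusion back to the fixed planar rectangle via the square-root trick (Harris inequality applied to the $16$ translates). You instead work directly with the planar crossing event, and this forces you to use a strictly stronger analytic input: the inequality you feed the influence bound into, namely $\sum_x \mathrm{Inf}_x^q \geq c\, \mathbb{P}_q(A)\bigl(1-\mathbb{P}_q(A)\bigr)\log\bigl(1/\max_x \mathrm{Inf}_x^q\bigr)$, is Talagrand's theorem (or the Friedgut--Kalai form), \emph{not} Theorem~\ref{maximum_influence} as stated in the paper --- the latter says nothing about the sum of influences for an asymmetric event, so the attribution ``Talagrand/BKKKL'' should really read ``Talagrand''. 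Granting that (standard, true) input, your argument is complete: the reduction of the maximal influence to a monochromatic one-arm event, bounded uniformly over $q\in[p,1/2]$ by monotonicity and Corollary~\ref{critical_exponent}, is exactly the right a priori estimate (and you correctly note that a four-arm bound would not be available off criticality), and integrating the log-odds together with the bound $f_n(1/2)\le c_2<1$ from Corollary~\ref{any_shape} closes the proof. In terms of trade-offs: the paper's torus trick needs only the weaker max-influence theorem but pays with the auxiliary event and the transfer step back to the plane; your route is shorter, avoids the torus entirely, and yields an explicit exponent $\ep = c_0(1/2-p)$ linear in the distance to criticality, at the price of invoking the stronger sharp-threshold inequality.
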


In order to prove this lemma, we consider a more general question. We
aim at understanding the behavior of the function $p\mapsto \Pp(A)$
for a non-trivial increasing event $A$ that depends on the states of a
finite set of sites (think of this event as being a crossing
event). This increasing function is equal to $0$ at $p=0$ and to $1$
at $p=1$. We are interested in the range of $p$ for which its value is
between $\ep$ and $1-\ep$ for some predetermined positive $\ep$ (this
range is usually referred to as a \emph{window}). Under certain
conditions on $A$, the window will become narrower when $A$ depends on
a larger number of sites. Its width can be bounded above in terms of
the size of the underlying graph. This kind of result is known as
\emph{sharp threshold} behavior.

The study of $p\mapsto \Pp(A)$ harnesses a differential equality known
as Russo's formula:

\begin{proposition}[Russo~\cite{Rus78}, Section~2.3 of~\cite{Gri99}]
  \label{russo_influence}Let $p\in(0,1)$ and $A$ an increasing event
  depending on a \emph{finite} set of sites $V$. We
  have $$\frac{d}{dp} \Pp(A) = \sum_{v\in V}\Pp(v\text{ pivotal for
  }A),$$ where $v$ is pivotal for $A$ if $A$ occurs when $v$ is open,
  and does not if $v$ is closed.
\end{proposition}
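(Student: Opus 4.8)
The plan is to reduce the single-parameter derivative to a sum of partial derivatives by first passing to a multi-parameter version of the measure. Since $A$ depends only on the finite set $V$, I would assign to each site $v\in V$ its own parameter $p_v$ and work with the product measure $\mathbb{P}_{(p_v)}$ under which $v$ is open with probability $p_v$, independently. The key structural observation is that $\mathbb{P}_{(p_v)}(A)$ is of degree at most one in each variable $p_v$ separately (it is multilinear): conditioning on the state of $v$ gives
$$\mathbb{P}_{(p_v)}(A) = p_v\,\mathbb{P}_{(p_v)}(A \mid v \text{ open}) + (1-p_v)\,\mathbb{P}_{(p_v)}(A \mid v \text{ closed}),$$
and neither conditional probability depends on $p_v$ itself.

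Differentiating this identity with respect to $p_v$ then yields
$$\frac{\partial}{\partial p_v}\mathbb{P}_{(p_v)}(A) = \mathbb{P}_{(p_v)}(A \mid v \text{ open}) - \mathbb{P}_{(p_v)}(A \mid v \text{ closed}).$$
The next step is to identify this difference with the probability that $v$ is pivotal. Here I would use that $A$ is increasing: if $A$ holds when $v$ is closed it also holds when $v$ is open, so, as events measurable with respect to the states of the sites other than $v$, one has the inclusion $\{A \text{ holds with } v \text{ closed}\}\subseteq\{A \text{ holds with } v \text{ open}\}$. Their set-theoretic difference is precisely $\{v \text{ pivotal for } A\}$, and since this event does not depend on the state of $v$, its probability equals the difference of conditional probabilities above.

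Finally, I would set all $p_v$ equal to the common value $p$ and apply the chain rule: the total derivative $\tfrac{d}{dp}\Pp(A)$ equals the sum over $v\in V$ of the partial derivatives evaluated on the diagonal $p_v\equiv p$, which is exactly $\sum_{v\in V}\Pp(v \text{ pivotal for } A)$. The argument is essentially bookkeeping once the multilinear structure is in place; the only point requiring genuine care—and the closest thing to an obstacle—is the middle identification, where the increasing hypothesis on $A$ guarantees that the difference of conditional probabilities is nonnegative and coincides with the pivotal probability rather than with some signed combination. A fully equivalent route, which I would mention as an alternative, is the Harris coupling with i.i.d.\ uniform variables $(U_v)$: increasing $p$ flips closed sites to open, and the derivative collects a contribution exactly when such an infinitesimal flip changes the occurrence of $A$, \ie\ precisely at a pivotal site.
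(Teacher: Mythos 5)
Your proof is correct: the multilinear (one parameter per site) structure, the identification of the partial derivative with the pivotal probability via the inclusion given by monotonicity, and the chain rule along the diagonal $p_v\equiv p$ together constitute a complete argument. The paper itself states Proposition~\ref{russo_influence} without proof, deferring to Russo and to Section~2.3 of Grimmett's book, and your argument is essentially the standard proof found in those references, so there is no divergence to report.
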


If the typical number of pivotal sites is sufficiently large whenever
the probability of $A$ is away from $0$ and 1, then the window is
necessarily narrow. Therefore, we aim to bound from below the expected
number of pivotal sites.  We present one of the most striking results
in that direction.

\begin{theorem}[Bourgain, Kahn, Kalai, Katznelson, Linial \cite{BKKKL92}, see also
  \cite{KKL88,Fri04,FK96,KS06}]\label{maximum_influence}
  Let $p_0>0$. There exists a constant $c = c(p_0) \in (0,\infty)$ such
  that the following holds. Consider a percolation model on a graph $G$
  with $|V|$ denoting the number of sites of $G$.  For every
  $p\in[p_0,1-p_0]$ and every increasing event $A$, there exists $v\in
  V$ such that \[ \Pp(v\text{ pivotal for }A)~\geq c
  \,\Pp(A)\big(1-\Pp(A)\big)\frac{\log |V|}{|V|}.  \]
\end{theorem}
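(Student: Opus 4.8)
The plan is to recognize this as the theorem of Kahn--Kalai--Linial (for $p=\tfrac12$) and its extension to general product measures by Bourgain--Kahn--Kalai--Katznelson--Linial, and to prove it by Fourier analysis on the discrete cube combined with the Bonami--Beckner hypercontractive inequality. Writing $n=|V|$ and identifying a configuration with an element of $\{0,1\}^V$, I would work with the Boolean function $f=\un_A$ (or rather its $\pm1$ rescaling $2\un_A-1$), and set $I_v:=\Pp(v\text{ pivotal for }A)$ and $I(f):=\sum_{v}I_v$. The content we want to extract is that the influences cannot \emph{all} be of order $1/n$: hypercontractivity forces the total influence to be at least of order $\Var(f)\log(1/\delta)$, where $\delta:=\max_v I_v$, and this combined with the trivial bound $I(f)\le n\delta$ pins down $\delta$.

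For the core estimate, take first $p=\tfrac12$ and work on $\{-1,1\}^V$ with the uniform measure $\P$, with $f$ taking values in $\{-1,1\}$. Expanding $f=\sum_{S\subseteq V}\hat f(S)\chi_S$ in the Fourier--Walsh basis $\chi_S=\prod_{v\in S}\omega_v$, the discrete derivative $D_vf(\omega):=\tfrac12(f(\omega)-f(\omega^v))$ is supported on frequencies containing $v$, namely $D_vf=\sum_{S\ni v}\hat f(S)\chi_S$, so that $I_v=\|D_vf\|_2^2=\sum_{S\ni v}\hat f(S)^2$ and $I(f)=\sum_S|S|\,\hat f(S)^2$. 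The crucial point is that $D_vf$ is $\{-1,0,1\}$-valued, hence all its $L^q$-norms coincide: $\|D_vf\|_q^q=I_v$. Applying $\|T_\rho g\|_2\le\|g\|_{1+\rho^2}$ to $g=D_vf$ and summing over $v$ gives
$$\sum_{S}|S|\,\rho^{2|S|}\hat f(S)^2\ \le\ \sum_v I_v^{2/(1+\rho^2)}\ \le\ \delta^{(1-\rho^2)/(1+\rho^2)}\,I(f),$$
the last step using $I_v^{2/(1+\rho^2)}\le \delta^{(1-\rho^2)/(1+\rho^2)}I_v$. Splitting the spectrum at a threshold $k$, the high frequencies contribute at most $I(f)/k$ to $\Var(f)=\sum_{S\ne\emptyset}\hat f(S)^2$, while the low frequencies are controlled by the displayed inequality after dividing through by $\rho^{2k}$. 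Choosing $\rho^2=\tfrac12$ and $k$ of order $\log(1/\delta)$ to balance the two contributions yields $\Var(f)\le C\,I(f)/\log(1/\delta)$, i.e.\ $I(f)\ge c\,\Var(f)\log(1/\delta)$.

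To conclude, combine this with $I(f)=\sum_v I_v\le n\delta$, giving $n\delta\ge c\,\Var(f)\log(1/\delta)$. If $\delta\le n^{-1/2}$ then $\log(1/\delta)\ge\tfrac12\log n$ and we obtain directly $\delta\ge c'\,\Var(f)\tfrac{\log n}{n}$; if instead $\delta>n^{-1/2}$ then $\delta$ already dominates $\tfrac{\log n}{n}$ (using $\Var(f)\le1$), so the bound holds in either case. Since $\Var(f)$ is comparable to $\P(A)(1-\P(A))$, this is exactly the asserted inequality at $p=\tfrac12$.

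The hard part is the passage to general $p\in[p_0,1-p_0]$, and this is where the dependence $c=c(p_0)$ enters. One must redo the analysis with the $p$-biased orthonormal characters $\chi_S=\prod_{v\in S}(\omega_v-p)/\sqrt{p(1-p)}$, and two new difficulties appear. First, the derivative $D_vf$ is no longer $\{-1,0,1\}$-valued, so its $L^q$-norms are no longer exact powers of a single quantity, and one must instead compare $\Pp(v\text{ pivotal})$ with $\|D_vf\|_2^2$ up to constants depending only on $p_0$. Second, and more seriously, the Bonami--Beckner inequality for the $p$-biased measure holds only with a hypercontractivity constant that degenerates as $p\to0$ or $1$; keeping it bounded is precisely what forces $p$ to stay in $[p_0,1-p_0]$. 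I would handle this either by invoking the $p$-biased hypercontractive estimate with its explicit $p_0$-dependent constant, or, following the original argument of Bourgain--Kahn--Kalai--Katznelson--Linial, by encoding the problem on the product space $[0,1]^V$ with Lebesgue measure --- where a clean hypercontractive inequality is available --- and transferring the influence bounds back. Modulo this uniform hypercontractivity input, the low/high-frequency splitting and the final optimization go through verbatim, with all constants now depending on $p_0$.
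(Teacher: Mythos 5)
The paper does not prove this theorem at all: it is imported as a black box from the cited literature (Bourgain--Kahn--Kalai--Katznelson--Linial, with the Kahn--Kalai--Linial theorem as its unbiased ancestor) and is only \emph{used}, in the proof of Lemma~\ref{lem:long_path}, via Russo's formula and the square-root trick. So there is no internal argument to compare yours against; the relevant question is whether your sketch reconstructs the cited proof correctly, and for the most part it does. The $p=\tfrac12$ core is the standard hypercontractivity argument and checks out: $I_v=\|D_vf\|_q^q$ for every $q$ because $D_vf$ is $\{-1,0,1\}$-valued, the Bonami--Beckner step gives $\sum_S|S|\rho^{2|S|}\hat f(S)^2\le \delta^{(1-\rho^2)/(1+\rho^2)}I(f)$, the spectral split at $k\asymp\log(1/\delta)$ yields $I(f)\ge c\,\Var(f)\log(1/\delta)$, and the final dichotomy works (in the case $\delta>n^{-1/2}$ one needs $\sqrt n\ge c'\log n$ uniformly in $n$, which holds with an explicit constant). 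Since $\Var(2\un_A-1)=4\,\mathbb{P}_{1/2}(A)\bigl(1-\mathbb{P}_{1/2}(A)\bigr)$, this is the asserted bound at $p=\tfrac12$.

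Two remarks on the general-$p$ part. First, your diagnosis that ``$D_vf$ is no longer $\{-1,0,1\}$-valued'' is not quite the right way to put it: with the combinatorial normalization $D_vf=\tfrac12\bigl(f(\omega^{v\to 1})-f(\omega^{v\to 0})\bigr)$ the derivative is still $\{-1,0,1\}$-valued under any product measure, and $\|D_vf\|_2^2$ is still exactly $\Pp(v\text{ pivotal})$; what changes is that the Fourier-side quantity $\sum_{S\ni v}\hat f(S)^2$ in the $p$-biased basis equals $p(1-p)$ times this probability, a harmless factor for $p\in[p_0,1-p_0]$. Second, the genuinely new input is, as you correctly identify, hypercontractivity for the $p$-biased measure with a constant uniform over $[p_0,1-p_0]$ (equivalently, the BKKKL encoding into $[0,1]^V$); your proposal delegates this, together with the ensuing bookkeeping, to the literature. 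That is a defensible move for a theorem the survey itself only cites, but be aware that it is precisely where the constant $c(p_0)$ and the restriction $p\in[p_0,1-p_0]$ come from, so it is the one part of your argument that is not self-contained.
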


This theorem does not imply that there are always many pivotal sites,
since it deals only with the maximal probability over all sites. It
could be that this maximum is attained only at one site, for instance
for the event that a particular site is open. There is a particularly
efficient way (see \cite{BR06a, BR06b,BD10}) to avoid this problem.
In the case of a \emph{translation-invariant event} $A$ on a torus
with $n$ vertices, sites play a symmetric role, so that the
probability to be pivotal is the same for all of
them. Proposition~\ref{russo_influence} together with
Theorem~\ref{maximum_influence} thus imply that in this case, for
$p\in [p_0,1-p_0]$,
\[ \frac{d}{dp}\Pp(A)\geq c \big(\Pp(A)(1-\Pp(A)\big)\log n.  \]
Integrating the previous inequality between two parameters
$p_0<p_1<p_2<1-p_0$, we obtain \[ \frac{\mathbb P_{p_1}(A)}{1-\mathbb
  P_{p_1}(A)}\leq \frac{\mathbb P_{p_2}(A)}{1-\mathbb
  P_{p_2}(A)}n^{-c(p_2-p_1)}.\] If we further assume that $\mathbb
P_{p_2}(A)\le r<1$, there exist $c,C>0$ depending on $r$ only such that
\begin{equation}\label{important}
  \mathbb P_{p_1}(A)\leq Cn^{-c(p_2-p_1)}.
\end{equation}



We are now in a position to prove Lemma~\ref{lem:long_path}. The proof uses Theorem~\ref{maximum_influence}. We consider a
carefully chosen translation-invariant event for which we can prove
sharp threshold. Then, we will bootstrap the result to our original
event. Let us mention that Kesten proved a
sharp-threshold in \cite{Kes80} using different arguments. Several other approaches
have been developed, see Theorem 3.3 and Corollary 2.2 of \cite{Gri10}
for instance.

\begin{proof}

  Consider the torus $\mathbb T_{4n}$ of size $4n$ (meaning $[0,4n]^2$ with sites $(0,k)$ identified with $(4n,k)$, for all $0\le k\le 4n$ and $(\ell,0)$ identified with $(\ell,4n)$, for all $0\le \ell\le 4n$). Let $B$ be the
  event that there exists a vertical closed path crossing some rectangle
  with dimensions $(n/2,4n)$ in $\mathbb T_{4n}$. This event is
  invariant under translations and satisfies
  \begin{equation*}
    \P(B)\geq \P\big([0,n/2]\times[0,4
    n]\text{~is crossed by a closed path vertically}\big) \geq c>0
  \end{equation*}
  uniformly in $n$. Since $B$ is decreasing, we can apply
  \eqref{important} with $B^c$ to deduce that for $p<1/2$, there exist
  $\ep,c>0$ such that
  \begin{equation}\label{aaa}
    \Pp(B)\geq 1-cn^{-\ep}.
  \end{equation}
  If $B$ holds, one of the $16$ rectangles of the form $[k\frac
  n2,(k+2)\frac n2]\times[\ell n, (\ell+2) n]$ for $k\in
  \{0,\ldots,7\}$ and $\ell\in \{0,1\}$ is crossed vertically by a
  closed path. We denote these events by $A_1$, \ldots, $A_{16}$ ---
  they are translates of the event that $[0,n]\times[0,2 n]$ is
  crossed vertically by a closed path. Using the Harris inequality in
  the second line, we find
  \begin{align*}
    \Pp(B) &= 1-\Pp(B^c)=1-\Pp\left(\bigcap_{i=1}^{16}A_i^c\right)\leq 1-\prod_{i=1}^{16}\Pp(A_i^c)\\
    &=1-\left[1-\Pp\big([0,n]\times[0,2 n]\text{~is crossed vertically
        by a closed path}\big)\right]^{16}.
  \end{align*}
  Plugging \eqref{aaa} into the previous inequality, we deduce
  \begin{equation*}
    \Pp\left([0,n]\times[0,2
      n]\text{~is crossed vertically by a closed path}\right) \geq
    1-(cn^{-\ep})^{1/16}.
  \end{equation*}
  Taking the complementary event, we obtain the claim. This
  application of the Harris inequality is colloquially known as the
  \emph{square-root trick}.
\end{proof}

\subsection{Definition of the correlation length}

We have studied how probabilities of increasing events evolve as
functions of $p$. If $p$ is fixed and we consider larger and larger
rectangles (of size $n$), crossing probabilities go to 1 whenever
$p>1/2$, or equivalently to 0 whenever $p<1/2$. But what happens if
$(p,n)\rightarrow (1/2,\infty)$ (this regime is called the
\emph{near-critical regime})?

If one looks at two percolation pictures in boxes of size $N$, one at
$p>0.5$, and one at $p<0.5$, it is only possible to identify which is supercritical and which is subcritical when $N$ is large enough. The scale at
which one starts to see that $p$ is not critical is called the
\emph{correlation length}. Interestingly, it can naturally be expressed in
terms of crossing probabilities.

\begin{definition}\label{def:correlation_length}
  For $\ep>0$ and $p\le1/2$, define the correlation length as
  $$
  L_p(\ep):=\inf\big\{n>0:\Pp\big([0,n]^2\text{is crossed horizontally
    by an open path}\big)\le \ep\big\}.
  $$
  Extend the definition of the correlation length to every $p\ge1/2$
  by setting $L_p(\ep):=L_{1-p}(\ep)$.
\end{definition}

Note that the fact that $L_p(\ep)$ is finite for $p\ne 1/2$ comes from the
fact that crossing probabilities converge to $0$ when $p<1/2$.

Let us also mention that taking $[0,n]^2$ in the definition of the
correlation length is not crucial. Indeed, the following result,
called a Russo-Seymour-Welsh result, implies that one could
equivalently define the correlation length with rectangles of other
aspect ratios, and that it would only change the value of the
corresponding $\ep$.
We state the following theorem without proof.

\begin{theorem}[see \emph{e.g.} \cite{Gri99, Kes82}]\label{RSWstrong}
  Let $p_0>0$. There exists a strictly increasing continuous function
  $\rho_{p_0}:[0,1]\rightarrow[0,1]$ such that $\rho_{p_0}(0)=0$
  satisfying the following property: for every $p\in (p_0,1-p_0)$ and
  every $n>0$,
  $$\rho_{p_0}(\delta)\le\Pp([0,2n]\times[0,n]\text{~is crossed
    horizontally by an open path})\le 1-\rho_{p_0}(\delta),$$ where
  $$\delta:=\Pp([0,n]^2\text{~is crossed horizontally by an open path}).$$
\end{theorem}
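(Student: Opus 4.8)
The statement is a Russo--Seymour--Welsh (RSW) estimate, and its real content is the \emph{lower} bound; the upper bound will then be extracted by planar duality. The plan is to manufacture, out of a single open crossing of the square $[0,n]^2$, an open crossing of the twice longer rectangle $[0,2n]\times[0,n]$ in the hard (horizontal) direction, while losing only a fixed increasing function of $\delta$. The whole construction will use nothing beyond the Harris inequality and the reflection symmetries of $\mathbb T$, both available at every value of $p$, so that no self-duality at $p=\tfrac12$ is invoked and a single function works uniformly in $p\in(p_0,1-p_0)$.

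The first and central step is the classical RSW boosting lemma: there is a continuous increasing $g\colon[0,1]\to[0,1]$ with $g(0)=0$ such that, for all $p$ and $n$, $\Pp[\,[0,\tfrac32 n]\times[0,n]\text{ is crossed horizontally}\,]\ge g(\delta)$. I would obtain it by conditioning on the \emph{lowest} open horizontal crossing $\Gamma$ of $[0,n]^2$: for each admissible path $\gamma$, the event $\{\Gamma=\gamma\}$ depends only on the sites on or below $\gamma$, so that conditionally the configuration strictly above $\gamma$ is still an unbiased percolation configuration. Reflecting the square and the path $\gamma$ across the vertical line through its right edge produces a translate of the same picture; the Harris inequality then lets one join $\gamma$ to its reflection through the unexplored region above it, gluing the crossing to a reflected copy and thereby crossing a rectangle longer by the factor $\tfrac32$. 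The reflection symmetry of $\mathbb T$ across the bisector of its two axes is what identifies the reflected and original crossing probabilities.

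The second step boosts the aspect ratio from $\tfrac32$ to $2$ by overlapping two copies. If $[0,\tfrac32 n]\times[0,n]$ and $[\tfrac12 n,2n]\times[0,n]$ are both crossed horizontally and their common square $[\tfrac12 n,\tfrac32 n]\times[0,n]$ is crossed vertically, then planarity forces the three open paths to intersect, producing an open horizontal crossing of $[0,2n]\times[0,n]$. These three events are increasing, so by the Harris inequality their joint probability is at least $g(\delta)^2\,\delta$, the vertical square-crossing factor being equal to $\delta$ by the reflection exchanging the two lattice directions. Setting $\rho_{p_0}(\delta):=g(\delta)^2\delta$ gives the lower bound; this $\rho_{p_0}$ is continuous, strictly increasing, vanishes at $0$, and satisfies $\rho_{p_0}(\delta)\le\delta$.

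For the upper bound I would use that $\mathbb T$ is a triangulation, so the failure of an open horizontal crossing of $[0,2n]\times[0,n]$ is \emph{exactly} the occurrence of a closed vertical crossing of it (the duality already exploited in Theorem~\ref{RSW}). This closed crossing is in the short direction, hence at least as likely as a closed vertical crossing of the sub-square $[0,n]^2$, whose probability is $1-\delta$ by duality together with the axis-exchanging symmetry; thus $\Pp[\,[0,2n]\times[0,n]\text{ is crossed horizontally}\,]\le\delta$. Since for $p\le\tfrac12$ one has $\delta\le\tfrac12$ (the square-crossing probability equals $\tfrac12$ at $p=\tfrac12$ by colour and axis symmetry, and is increasing in $p$) and $\rho_{p_0}(\delta)\le\delta$, this yields $\Pp[\cdots]\le\delta\le 1-\rho_{p_0}(\delta)$, which is precisely the regime in which the correlation length is defined and the theorem is used; the values $p\ge\tfrac12$ are reached through the convention $L_p=L_{1-p}$. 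The whole difficulty is concentrated in the boosting lemma of the second paragraph: the two delicate points will be the precise measurability of the lowest crossing, which is what makes the region above it carry a fresh configuration, and the topological verification that the reflected crossing and the connecting path really do glue into a longer crossing --- both resting crucially on planarity and on the exact reflection symmetries of the triangular lattice.
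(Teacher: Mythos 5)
Your overall architecture is the classical Russo--Seymour--Welsh route, and that is indeed the right one here: the paper states Theorem~\ref{RSWstrong} \emph{without proof}, deferring to \cite{Gri99,Kes82}, and those proofs follow exactly your plan (boost the square to a $3{:}2$ rectangle, glue two such rectangles through the overlap square by Harris, get the upper bound by triangulation duality). Your step 2 and your duality computation are fine. The problem is the boosting lemma itself, where you correctly say all the difficulty sits --- but the genuine gap there is not the measurability or topology points you list at the end; those are routine. After conditioning on the lowest crossing $\{\Gamma=\gamma\}$, the region above $\gamma$ does carry a fresh configuration, but the sentence ``the Harris inequality then lets one join $\gamma$ to its reflection through the unexplored region above it'' is not an argument. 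The reflected curve $\sigma(\gamma)$ lies in a region whose configuration is \emph{independent} of what you conditioned on, so it is itself open only with probability $p^{|\gamma|}$, and Harris alone provides no lower bound whatsoever on the conditional probability that $\gamma$ gets connected to anything: some uniform-in-$\gamma$ quantitative estimate is needed, and that estimate is the entire content of RSW. At $p=\tfrac12$ it is supplied by colour-switching/self-duality in the symmetric domain $\Omega_\gamma$ (this is precisely Step~2 of the paper's proof of Theorem~\ref{RSW}, giving the bound $\tfrac12$); you have, rightly, forbidden yourself that tool since the theorem must hold for all $p\in(p_0,1-p_0)$, but you have not replaced it with anything.

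The missing idea is the \emph{square-root trick}. One first enlarges the target event to ``the symmetric curve $\gamma\cup\sigma(\gamma)$ is reached from the far side of the symmetric region'': since any crossing of that region in the transverse direction must hit $\gamma\cup\sigma(\gamma)$, this enlarged event has probability at least a square-crossing probability, hence at least $\delta$. Then, because the reflection fixes both the curve and the region above it and preserves the lattice and the measure, the two increasing events ``$\gamma$ is reached'' and ``$\sigma(\gamma)$ is reached'' have equal probability, and Harris applied to their complements gives $\Pp[\gamma\text{ is reached}]\ge 1-\sqrt{1-\delta}$, \emph{uniformly in} $\gamma$; as this event is supported on the unexplored side of $\gamma$, the bound survives the conditioning, and one obtains a function such as $g(\delta)\ge\delta\bigl(1-\sqrt{1-\delta}\bigr)^{2}$ for the $3{:}2$ rectangle, after which the rest of your proof goes through. (Care is also needed with which reflections are actually lattice symmetries for the skewed parallelograms $[0,m]\times[0,n]$ of $\mathbb T$; the paper's own Theorem~\ref{RSW} works with hexagons and reflections across their sides for this reason.) A smaller but real point: your upper bound $\Pp[\,\cdot\,]\le\delta$ yields $1-\rho_{p_0}(\delta)$ only when $\delta\le\tfrac12$, i.e.\ $p\le\tfrac12$, and for $p>\tfrac12$ with $n$ large the stated form cannot hold at all (both $\delta$ and the $2n\times n$ crossing probability tend to $1$ while $1-\rho_{p_0}(\delta)$ stays below $1-\rho_{p_0}(\tfrac12)$); the correct form of the right-hand inequality is $1-\rho_{p_0}(1-\delta)$, which your bound $\le\delta$ does imply for every $p$. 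This should be flagged as an imprecision in the statement rather than handled by silently restricting the range of $p$.
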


From now on, fix $p_0\in(0,\frac12)$. Let us mention that $\ep$ is
chosen in the following fashion. We want to argue that percolation in
boxes of size $n\gg L_p(\ep)$ looks subcritical or supercritical
depending on $p<1/2$ or $p>1/2$. To do so, we would like to have that
$$\Pp([0,2n]\times[0,n]\text{ crossed vertically})<\frac{1}{\binom 82e}$$ for
$n\ge L_p(\ep)$ and $p<1/2$. Therefore, fix $\ep=\ep(p_0)$ small enough so that
$\rho(\ep)<1/(\binom 82e)$. Keep in mind that all constants henceforth
depend on $p_0$ and $\ep>0$.

Note that with this value of $\ep$, the correlation length at
criticality equals infinity, since probabilities to be connected at
distance $n$ do not decay exponentially for $p=1/2$.

One very important feature of the correlation length is the following
property. Fix $p_0>0$ and $\ep=\ep(p_0)$. For any topological
rectangle $(\Omega,A,B,C,D)$, there exists $c>0$ such that for
$p\in(p_0,1-p_0)$ and $n<L_p(\ep)$,
\begin{equation}\label{crossing_correlation_length}
  \Pp\big[\mathcal C_{1/n}(\Omega,A,B,C,D)\big]~\ge~c.
\end{equation}
In this sense, the configuration looks critical ``uniformly in $n<L_p(\ep)$''. We do not prove this fact, which uses a variant of the RSW theory and can be found in the literature. We will see in the
next section that fractal properties below the
correlation length are also similar to fractal properties at criticality.

We conclude this section by mentioning that the correlation
length is classically defined as the ``inverse rate'' of exponential decay of the
two-point function. More precisely, since the quantity
$\Pp(0\leftrightarrow nx)$ is super-multiplicative, the quantity
$\lim_{n\rightarrow \infty}-\frac1n\log \Pp(0\leftrightarrow nx)$ is
well-defined. The correlation length is then defined as the inverse of this quantity. It is possible to prove that $L_p(\ep)$ is, up to
universal constant depending only on $p_0$ and $\ep$, asymptotically equivalent to
$\lim_{n\rightarrow \infty}-\frac1n\log \Pp(0\leftrightarrow nx)$ when
$p<1/2$ (note that Proposition~\ref{exponential_decay} gives one
inequality, see \emph{e.g.} Theorem 3.1 of \cite{Nol08} for the other
bound).

\subsection{Percolation below the correlation length}

Proposition~\ref{exponential_decay} together with the definition of
the correlation length study percolation in boxes of size $n\gg
L_p(\ep)$. The goal of this section
is to describe percolation below the correlation length. In
particular, we aim to prove that connectivity properties are
essentially the same as at criticality by proving that the variation
of $\Pp[A_\sigma(n,N)]$ as a function of $p$ is not
large provided that $N<L_p(\ep)$. As a direct consequence, $\Pp[A_\sigma(n,N)]$ remains basically the same when varying $p$ in the regime $N<L_p(\ep)$. This
fact justifies the following motto: \emph{below $L_p$, percolation
  looks critical}.

Before stating the main result, recall that
\eqref{crossing_correlation_length} easily implies the following
collection of results (they correspond to
Theorems~\ref{quasi-multiplicativity},~\ref{universal_exponent} and
Proposition~\ref{landing}), since they are consequences of crossing
estimates only.

Fix $p_0\in(0,1)$, $\ep>0$ small enough and $\delta>0$. Consider a
sequence $\sigma$. There exist $c,C\in(0,\infty)$ such that for any
$p\in(p_0,1-p_0)$ and $n_1<n_2<n_3\le L_p(\ep)$,
\small \begin{equation}\label{quasimultiplicativity_off}c \mathbb P_p
  \big[ A_{\sigma} (n_1,n_2) \big]\mathbb
  P_p\big[A_{\sigma}(n_2,n_3)\big]\le
  \P\big[A_{\sigma}(n_1,n_3)\big] \le \P
  \big[ A_{\sigma} (n_1,n_2) \big]\mathbb
  P_p\big[A_{\sigma}(n_2,n_3)\big].\end{equation}\normalsize
Furthermore, for any choice of $\delta$-well separated landing
sequences $I,J$ and for any $2n\le N\le L_p(\ep)$,
\begin{equation}\label{localite_off}\mathbb P_p\big[A^{I,J}_\sigma(n,N)\big]\le\mathbb P_p\big[A_\sigma(n,N)\big]\le C\mathbb P_p\big[A^{I,J}_\sigma(n,N)\big].\end{equation}
Finally, for every $0<n<N\le L_p(\ep)$,
\begin{align}
  &c\left(\frac nN\right)^{2}\le \mathbb P_p\big[A_{01001}(n,N)\big] \le C \left(\frac nN\right)^{2},\\
  &c\left(\frac nN\right)^{2}\le \mathbb P_p\big[A_{010}^+(n,N)\big] \le C \left(\frac nN\right)^{2},\label{3_arm_off}\\
  &c\frac nN~\le~\mathbb P_p\big[A_{01}^+(n,N)\big]~\le~C \frac nN,\\
  &\mathbb P_p\big[A_{1010}(n,N)\big]\ge
  \left(\frac{n}{N}\right)^{2-c}.\label{exponents_off}\end{align}

The last inequality comes from \eqref{exponents_in}. In words, the
quasi-multiplicativity, the fact that prescribing landing sequence
affects the probability of an arm-event by a multiplicative constant only, and the
universal exponents are still valid for $p\ne \frac12$ {\em as long as we
consider scales less than $L_p(\ep)$}. Note that the universal
constants $c$ and $C$ depend on $\sigma$, $\delta$, $p_0$ et $\ep$
only (in particular it does not depend on $p\in[p_0,1-p_0]$).

In fact, the description of percolation below the scale $L_p(\ep)$ is
much more precise: \emph{no arm-event} varies in this regime and we
get the following spectacular result.

\begin{theorem}[Kesten \cite{Kes87}]\label{no_variation} Fix a
  sequence $\sigma$ of colors,
  $p_0\in(0,1)$ and $\ep>0$ small enough. There exist
  $c,C\in(0,\infty)$ such that
  $$c\P(A_\sigma(n))\le\Pp(A_\sigma(n))\le C\P(A_\sigma(n))$$ for every $p\in(p_0, 1-p_0)$ and $n\le L_p(\ep)$.
\end{theorem}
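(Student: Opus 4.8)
The plan is to control the logarithmic derivative $\frac{d}{dp}\log\Pp(A_\sigma(n))$ and to show that its integral between $1/2$ and $p$ stays bounded, provided the whole integration path remains in the regime $n\le L_q(\ep)$. This is automatic: for every $q$ lying between $p$ and $1/2$ the correlation length only increases, so $L_q(\ep)\ge L_p(\ep)\ge n$, and all the off-critical estimates \eqref{quasimultiplicativity_off}, \eqref{localite_off} and \eqref{exponents_off} are available uniformly along the path. Exponentiating a bounded integral then yields the two-sided comparison with $\P(A_\sigma(n))$.

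First I would apply Russo's formula in its general form (the signed version of Proposition~\ref{russo_influence}, valid for arbitrary events, since the polychromatic $A_\sigma$ is in general neither increasing nor decreasing): writing $\Pp(v\text{ pivotal})$ for the probability that flipping the state of $v$ changes the occurrence of $A_\sigma(n)$, one has $\big|\frac{d}{dp}\Pp(A_\sigma(n))\big|\le \sum_v \Pp(v\text{ pivotal for }A_\sigma(n))$, hence
\[ \left|\frac{d}{dp}\log\Pp(A_\sigma(n))\right|~\le~\frac{1}{\Pp(A_\sigma(n))}\sum_v\Pp(v\text{ pivotal for }A_\sigma(n)). \]
The heart of the matter is the pivotal estimate. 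If $v$ sits at scale $2^j$, being pivotal forces four alternating arms around $v$ up to scale $\sim 2^{j}$ together with the ambient $\sigma$-arm structure from the inner boundary to $\partial\Lambda_{2^{j}}$ and from $\partial\Lambda_{2^{j+1}}$ to $\partial\Lambda_n$. Using quasi-multiplicativity \eqref{quasimultiplicativity_off} and the landing-sequence localization \eqref{localite_off} to splice these pieces (so that $\mathbb P_q(A_\sigma(1,2^j))\,\mathbb P_q(A_\sigma(2^j,n))\asymp\mathbb P_q(A_\sigma(n))$), one obtains $\Pp(v\text{ pivotal})\le C\,\Pp(A_\sigma(n))\,\mathbb P_p(A_{1010}(2^j))$. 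Summing over the $\asymp 2^{2j}$ sites at scale $2^j$,
\[ \sum_v\Pp(v\text{ pivotal for }A_\sigma(n))~\le~C\,\Pp(A_\sigma(n))\sum_{2^j\le n}2^{2j}\,\mathbb P_p(A_{1010}(2^j)). \]

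Now I would exploit that the four-arm exponent is \emph{strictly smaller than $2$}: by \eqref{exponents_off}, $\mathbb P_p(A_{1010}(2^j,n))\ge (2^j/n)^{2-c}$, and combined with quasi-multiplicativity this gives $2^{2j}\,\mathbb P_p(A_{1010}(2^j))\le C\,n^2\,\mathbb P_p(A_{1010}(n))\,(2^j/n)^{c}$. The geometric factor $(2^j/n)^c$ makes the scale sum dominated by its top term, so
\[ \left|\frac{d}{dp}\log\Pp(A_\sigma(n))\right|~\le~C\,n^2\,\mathbb P_p(A_{1010}(n)). \]
The same Russo computation applied to the \emph{increasing} crossing event of $[0,n]^2$ produces a matching \emph{lower} bound on its derivative: every site of the middle third carrying the four alternating arms to the four sides is pivotal, and there are $\ge c\,n^2\,\mathbb P_q(A_{1010}(n))$ of them in expectation by the off-critical crossing estimates \eqref{crossing_correlation_length} and localization \eqref{localite_off}, whence $\frac{d}{dq}\mathbb P_q([0,n]^2\text{ crossed})\ge c\,n^2\,\mathbb P_q(A_{1010}(n))$.

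It remains to integrate. For $q$ between $1/2$ and $p$ we stay below $L_q(\ep)$, so both bounds hold and
\[ \big|\log\Pp(A_\sigma(n))-\log\P(A_\sigma(n))\big|~\le~C\int n^2\,\mathbb P_q(A_{1010}(n))\,dq~\le~\frac{C}{c}\int\frac{d}{dq}\mathbb P_q([0,n]^2\text{ crossed})\,dq, \]
and the last integral is the increment of a quantity bounded between $0$ and $1$, hence at most $1$. This gives a bound independent of $n$ and of $p$, and exponentiating furnishes the constants $c,C$ of the statement. The main obstacle is the pivotal estimate of the second and third paragraphs: turning ``$v$ is pivotal'' into a clean product of a four-arm factor and the ambient $\sigma$-arm probability requires the well-separation of arms underlying \eqref{quasimultiplicativity_off} and \eqref{localite_off}, and the decisive quantitative input is precisely that the four-arm exponent is strictly below $2$, which is what makes the scale sum geometric (dominated by scale $n$) rather than merely logarithmic, and hence integrable against the bounded crossing probability.
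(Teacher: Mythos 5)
Your proposal follows the same architecture as the paper's proof: the (signed) Russo formula, the observation that a pivotal site carries a local four-arm configuration spliced to the ambient $\sigma$-arm structure via quasi-multiplicativity \eqref{quasimultiplicativity_off} and localization \eqref{localite_off}, the use of \eqref{exponents_off} (four-arm exponent strictly below $2$) to make the sum over scales geometric and dominated by its top term, the lower bound $\frac{d}{dq}\mathbb P_q[E(n)]\ge c\,n^2\,\mathbb P_q[A_{1010}(n)]$ for the crossing event of $[0,n]^2$ (this is exactly \eqref{abcd2}), and finally integration over $(\tfrac12,p)$ using the monotonicity $L_{p'}(\ep)\ge L_p(\ep)$ together with the boundedness of crossing probabilities. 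All of these steps appear in the same roles in the paper.

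There is, however, a genuine gap in your pivotal estimate: the claim that a pivotal site ``at scale $2^j$'' carries four alternating arms ``up to scale $\sim 2^j$'' is false for sites $v$ close to $\partial\Lambda_n$, i.e.\ with $|v|\approx n$ but $d:=n-|v|\ll n$. Such a site only carries four arms up to distance $d$, after which the box boundary cuts them off. This is not a removable technicality. Replacing your factor $\Pp[A_{1010}(2^j)]$ by the honest factor $\Pp[A_{1010}(d)]$ and summing over the $\approx n2^k$ sites with $d\approx 2^k$ gives, via \eqref{quasimultiplicativity_off} and \eqref{exponents_off}, a contribution of order $\sum_k n2^k(n/2^k)^{2-c}\,\Pp[A_{1010}(n)]\approx n^{3-c}\,\Pp[A_{1010}(n)]$, which exceeds the needed $n^2\,\Pp[A_{1010}(n)]$ by a factor $n^{1-c}$; the comparison with $\frac{d}{dp}\Pp[E(n)]$ then fails and the integrated bound diverges with $n$. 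The paper closes exactly this hole in its second case ($|v|\ge n/2$): pivotality there also forces the translate of the \emph{half-plane} three-arm event $A^+_{010}(n-|v|,n/2)$, whose probability is at most $C\left((n-|v|)/n\right)^2$ by the universal exponent \eqref{3_arm_off} (Theorem~\ref{universal_exponent}). This extra quadratic factor turns the boundary contribution into $\sum_k n2^k\,(2^k/n)^{c}\,\Pp[A_1(n)]\Pp[A_{1010}(n)]\le C\,n^2\,\Pp[A_1(n)]\Pp[A_{1010}(n)]$, as required. So your argument needs one additional ingredient --- the universal half-plane three-arm exponent, equal to $2$, applied to near-boundary pivotal sites --- before the scale sum, and hence the integration, goes through.
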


The idea of the proof is to estimate the logarithmic derivative of
arm-event probabilities in terms of the derivative of crossing
probabilities. In order to do so, we relate the probability to be
pivotal for arm-events with the probability to be pivotal for crossing
events.

\begin{proof}[Proof of Theorem~\ref{no_variation}]
  In the proof, $C_1,C_2,\dots$ are constants in $(0,\infty)$
  depending on $p_0$ and $\ep$ only. We first treat the case of
  $\Pp[A_1(n)]$ when $p>1/2$.

  Recall that $n$ is assumed to be smaller than $L_p(\ep)$, so that
  \eqref{quasimultiplicativity_off}, \eqref{localite_off},
  \eqref{3_arm_off} and \eqref{exponents_off} are satisfied for any scale smaller than $n$.


  \begin{figure}[ht!]
    \begin{center}
      \includegraphics[width=0.70\textwidth]{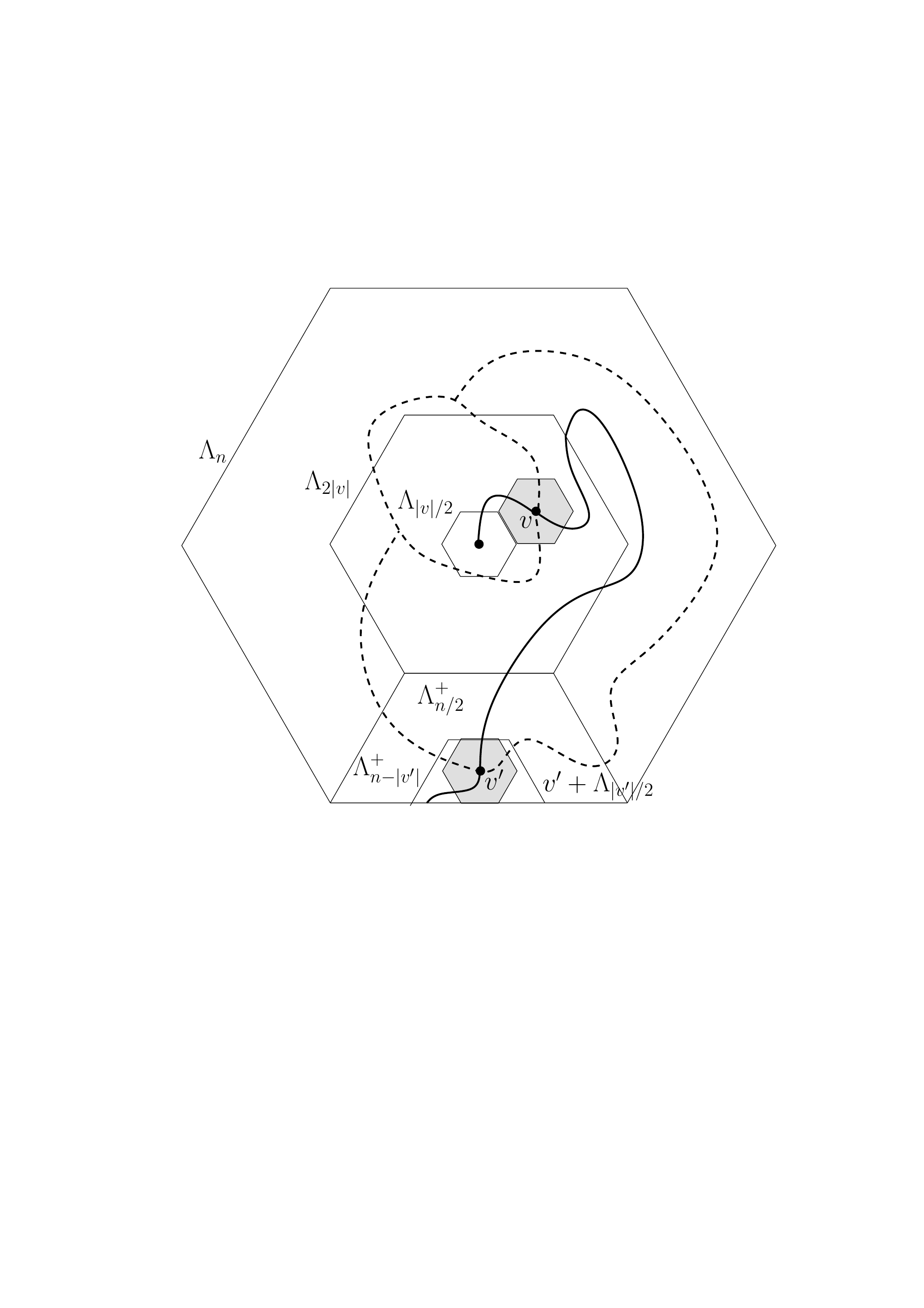}
    \end{center}
    \caption{\label{fig:pivotal}The event that $v$ and $v'$ are pivotal for
      $A_1(n)$. The dotted line corresponds to a closed circuit.}
  \end{figure}

  Russo's formula implies
  \begin{equation}\label{abcd}
    \frac{d}{dp} \Pp\big[A_1(n)\big]~=~\sum_{v\in
      \Lambda_n}\Pp\big[v\text{~pivotal for~}A_1(n)\big].
  \end{equation}
  The site $v$ is pivotal for $A_1(n)$ if and only if there are four
  arms of alternating colors emanating from it, one of the open arms
  going to the origin, the other to the boundary of the box, and the
  two closed arms together with the site $v$ forming a circuit around the origin (see
  Fig~\ref{fig:pivotal}). Let us treat two cases:
  \begin{itemize}
  \item If $|v|\le n/2$, where $|v|$ is the graph distance to the
    origin, the pivotality of the site $v$ implies that the following
    events hold: $A_1(|v|/2)$, $A_1(2|v|,n)$ and the translation of
    $A_{1010}(|v|/2)$ by $v$ (see Fig~\ref{fig:pivotal} again). We
    deduce, using independence, that
    \begin{align*}
      \Pp\big[v\text{~pivotal for~}A_1(n)\big] &\le \Pp\big[A_1(|v|/2)
      \big]\Pp\big[A_1(2|v|,n)\big]\Pp\big[A_{1010}(|v|/2)\big] \\
      &\le C_1 \Pp\big[A_1(n)\big]\Pp\big[A_{1010}(|v|/2)\big],
    \end{align*}
    where in the second line we used \eqref{quasimultiplicativity_off}
    twice together with the fact that $\Pp\big[A_1(|v|/2,2|v|)\big]\ge C_0$
 (the latter comes from the crossing estimates
    \eqref{crossing_correlation_length}). Equations
    \eqref{quasimultiplicativity_off} and \eqref{exponents_off} give
    $$\mathbb P_p\big[A_{1010}(|v|/2)\big] \le
    C_2\left(\frac{2n}{|v|}\right)^{2-c}\mathbb
    P_p\big[A_{1010}(n)\big],$$ which, when tuned into the previous
    displayed inequality, leads to
    \begin{align}\label{Case_1}
      \Pp\big[v\text{~pivotal for~}A_1(n)\big] \le
      C_3\left(\frac{2n}{|v|}\right)^{2-c}
      \Pp\big[A_1(n)\big]\Pp\big[A_{1010}(n)\big].
    \end{align}
  \item If $|v|\ge n/2$, the site $v$ is pivotal if the following
    events hold: $A_1(n/2)$, the translation of $A_{1010}(n-|v|)$ by $v$
    and the translation of $A^+_{010}(n-|v|,n/2)$ by $v$ (see
    Fig~\ref{fig:pivotal} again). We deduce, using independence, that
    \begin{align}
      \Pp\big[v&\text{~pivotal for~}A_1(n)\big] \nonumber\\
      &\le \Pp\big[A_1(n/2)
      \big]\Pp\big[A_{1010}(n-|v|)\big]\Pp\big[A^+_{010}(n-|v|,n/2)\big] \nonumber\\
      &\le C_4\Pp\big[A_1(n)
      \big]\left(n-|v|\right)^{2-c}\Pp\big[A_{1010}(n)\big]\left(\frac{2(n-|v|)}{n}\right)^2,
      \label{Case_2}
    \end{align}
    where an argument similar to the previous case was used once again
    to relate $\Pp[A_1(n/2) ]$ to $\Pp\big[A_1(n)]$ and
    $\Pp[A_{1010}(n-|v|)]$ to $\Pp[A_{1010}(n)]$. The bound
    \eqref{3_arm_off} was used to bound $\Pp[A^+_{010}(n-|v|,n/2)]$.
  \end{itemize}
  Plugging the bounds \eqref{Case_1} and \eqref{Case_2} into
  \eqref{abcd}, we easily find that for $n\le L_p(\ep)$,
  \begin{equation}\label{dif_inq}
    \frac{d}{dp}\Pp\big[A_1(n)\big]\le C_5\Pp\big[A_1(n)\big]\cdot
    n^2\Pp\big[A_{1010}(n)\big].
  \end{equation}
  We now relate $n^2\Pp\big[A_{1010}(n)\big]$ to the derivative of the
  probability of the event that $[0,n]^2$ is crossed horizontally by
  an open path. Denote this event by $E(n)$. We have
  \begin{equation*}
    \frac{d}{dp} \Pp\big[E(n)\big]~=~\sum_{v\in
      \Lambda_n}\Pp\big[v\text{~pivotal for~}E(n)\big].
  \end{equation*}
  The site $v$ is pivotal for $E(n)$ if and only if there are four
  arms of alternating colors emanating from it, the open arms going to
  the left and the right of the box, and the closed ones to the top
  and the bottom. Using \eqref{localite_off}, we find that the
  probability of $v\in[\frac n3,\frac{2n}3]^2$ being pivotal for
  $E(n)$ is larger than a universal constant times the probability of
  having four arms of alternating colors going from $v$ to the
  boundary of $[0,n]^2$. Since $[0,n]^2\subset (v+\Lambda_{2n})$, this
  implies immediately that
  $$\Pp\big[v\text{~pivotal for~}E(n)\big]\ge \frac{1}{C_6}\Pp\big[A_{1010}(2n)\big]\ge \frac{1}{C_7}\Pp\big[A_{1010}(n)\big].$$
  Once again, quasi-multiplicativity was used in a crucial way in order to obtain the last inequality. By summing on vertices $v$ in $[\frac n3,\frac{2n}3]^2$, we get
  \begin{equation}\label{abcd2}
    \frac{d}{dp} \Pp\big[E(n)\big]~\ge~\frac{1}{9C_7}n^2\Pp\big[A_{1010}(n)\big].
  \end{equation}
  Altogether, we find that for $n\le L_p(\ep)$,
  $$\frac{d}{dp}\Pp\big[A_1(n)\big]\le C_5\Pp\big[A_1(n)\big]\cdot
  n^2\Pp\big[A_{1010}(n)\big]\le
  9C_5C_7\Pp\big[A_1(n)\big]\frac{d}{dp}\Pp\big[E(n)\big].$$ Since for
  $p'\in(\frac12,p)$, $L_{p'}(\ep)\ge L_p(\ep)$, we deduce that
  \begin{align*}\label{dif}\log \Pp\big[A_1(n)\big]-\log
    \P\big[A_1(n)\big]~&\le~C_8\int_{1/2}^p\frac{d}{dp'}\mathbb P_{p'}\big[E(n)\big]~dp'\\
    &=C_8( \Pp[E(n)]-\P[E(n)])\le C_8,\end{align*} which is the claim.


  The same reasoning can be applied when $p<\frac12$ and for any
  sequence $\sigma$. The main step is to get \eqref{dif_inq} with $1$
  replaced by $\sigma$, the end of the proof being the same. In order
  to obtain this inequality, one harnesses a generalization of Russo's
  formula; we refer to \cite[Theorem~26]{Nol08} for a complete
  exposition.
\end{proof}

\subsection{Near-critical exponents}\label{sec:scaling_relations}

It is now time to relate arm-exponents to near-critical ones. The goal
of this section is to prove the following:

\begin{theorem}[Kesten \cite{Kes87}]\label{CD}
  Let $p_0\in(0,1)$ and $\ep>0$ small enough. There exist
  $c,C\in(0,\infty)$ such that for every $p\in(\frac12,p_0)$,
  \begin{align*}&c~\le~(p-1/2)L_p(\ep)^2\P\big[A_{1010}(L_p(\ep))\big]~\le~C, \\
    &c\P\big[A_1(L_p(\ep))\big]~\le~\theta(p)~\le~C
    \P\big[A_1(L_p(\ep))\big].\end{align*}
\end{theorem}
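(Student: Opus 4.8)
The plan is to derive both displays from Russo's formula (Proposition~\ref{russo_influence}) applied to the crossing event $E(n):=\{[0,n]^2\text{ crossed horizontally}\}$, using repeatedly that arm probabilities do not vary below the correlation length (Theorem~\ref{no_variation}). I treat $p<\tfrac12$ with open crossings; the stated regime $p\in(\tfrac12,p_0)$ follows verbatim after exchanging open and closed sites, using $L_p(\ep)=L_{1-p}(\ep)$ and the colour-symmetry of $\P[A_{1010}(\cdot)]$. The heart of the first display is the two-sided bound
$$c\,n^2\,\mathbb P_q[A_{1010}(n)]\ \le\ \tfrac{d}{dq}\mathbb P_q[E(n)]\ \le\ C\,n^2\,\mathbb P_q[A_{1010}(n)]\qquad(n\le L_q(\ep)),$$
with $c,C$ depending only on $p_0,\ep$. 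The lower bound is \eqref{abcd2}. For the upper bound I would sum Russo's formula over all $v\in[0,n]^2$: a bulk site at distance $d$ from $\partial[0,n]^2$ is pivotal only through a four-arm event to scale $d$, and quasi-multiplicativity \eqref{quasimultiplicativity_off} together with \eqref{exponents_off} bounds the total bulk contribution by $Cn^2\mathbb P_q[A_{1010}(n)]$, while sites within distance $d$ of a side force a half-plane three-arm event of probability of order $d^{-2}$ by \eqref{3_arm_off} and hence contribute only a lower-order term; localization \eqref{localite_off} makes this bookkeeping precise.

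Since $L_q(\ep)\ge L_p(\ep)$ for $q\in(p,\tfrac12)$, setting $n:=L_p(\ep)$ makes this estimate valid on the whole interval, and Theorem~\ref{no_variation} replaces $\mathbb P_q[A_{1010}(n)]$ by $\P[A_{1010}(n)]$ uniformly in $q$. Integrating over $q\in(p,\tfrac12)$ then gives
$$\P[E(L_p)]-\Pp[E(L_p)]\ \asymp\ (\tfrac12-p)\,L_p(\ep)^2\,\P[A_{1010}(L_p(\ep))].$$
The left-hand side lies in a fixed interval bounded away from $0$ and $\infty$: $\Pp[E(L_p)]\le\ep$ by the definition of the correlation length, while $\P[E(L_p)]$ is bounded below by Corollary~\ref{any_shape}, so for $\ep$ small their difference is of order $1$. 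This yields the first pair of inequalities.

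For the second display the upper bound is immediate: $\theta(p)\le\Pp(0\leftrightarrow\partial\Lambda_{L_p})=\Pp[A_1(L_p)]\le C\,\P[A_1(L_p)]$ by Theorem~\ref{no_variation}. The lower bound requires showing that above $L_p$ the model is genuinely supercritical and that the one-arm to scale $L_p$ attaches to the infinite cluster with positive probability. By the choice of $\ep$ the closed crossing probability of a long box at scales $\ge L_p$ is below the threshold $1/(36e)$, so Proposition~\ref{exponential_decay} applied to closed sites gives $\Pp(0\stackrel{*}{\leftrightarrow}\partial\Lambda_m)\le 6e^{-m/(2L_p)}$ for $m\ge L_p$. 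Consequently, for a dyadic annulus $\Lambda_{2^{k+1}L_p}\setminus\Lambda_{2^kL_p}$, the probability that it fails to contain an open circuit around the origin, or fails to be crossed radially by an open path, is at most $C\,2^kL_p\,e^{-c2^k}$, which is summable in $k$.

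With these ingredients I would run the standard construction of nested open circuits joined by open radial crossings spanning consecutive annuli: all events are increasing and, above $L_p$, fail with exponentially small probability, so by the Harris inequality they all occur (from some $k_0$ on) with probability bounded below, producing an infinite open cluster that meets $\Lambda_{L_p}\setminus\Lambda_{L_p/2}$. The event $\{0\leftrightarrow\partial\Lambda_{L_p}\}$ has probability $\Pp[A_1(L_p)]\asymp\P[A_1(L_p)]$, and adding an open circuit in $\Lambda_{L_p}\setminus\Lambda_{L_p/2}$ around the origin (of probability bounded below by \eqref{crossing_correlation_length} since the scale is $<L_p$) forces the arm from $0$, which crosses that annulus radially, to meet the circuit; a final application of Harris then gives $\theta(p)\ge c\,\P[A_1(L_p(\ep))]$. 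The main obstacle is precisely this lower bound: the derivative estimate and its integration are essentially bookkeeping on top of Theorem~\ref{no_variation}, whereas converting ``supercriticality above $L_p$'' into a quantitative connection to infinity demands the exponential decay of dual connections above the correlation length together with a gluing arranged so that every event used remains increasing.
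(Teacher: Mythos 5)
Your architecture is essentially Kesten's original one, and in both halves it departs from the route the survey takes. For the first display, the paper does \emph{not} prove a two-sided derivative bound for the planar crossing event: the upper bound on the derivative is obtained for the torus event $F(L_p)$ (an open circuit winding around $\mathbb T_{L_p}$), precisely because on the torus every pivotal site carries a full-plane four-arm event and no boundary analysis is needed; the order-one change then comes from self-duality on the torus ($\P[F(L_p)]\le 1/2$) together with an RSW construction giving $\Pp[F(L_p)]\ge 3/4$. You instead bound the derivative of $E(L_p)$ itself from above and handle boundary pivotal sites by hand; this is viable (it is what Kesten and Nolin do), and your replacement for the torus duality --- $\Pp[E(L_p)]\le\ep$ by definition of $L_p$ versus $\P[E(L_p)]\ge c_1$ by Corollary~\ref{any_shape} --- is correct. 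But one claim in your bookkeeping is wrong: boundary sites do \emph{not} contribute a lower-order term. A site at distance $d$ from a side costs about $\Pp[A_{1010}(d)]\cdot(d/n)^2$ by \eqref{3_arm_off}, and summing over the $\sim n$ such sites and over $d$ gives $\tfrac1n\sum_d d^2\,\Pp[A_{1010}(d)]$, which by \eqref{quasimultiplicativity_off} and \eqref{exponents_off} is of the \emph{same} order $n^2\Pp[A_{1010}(n)]$ as the bulk. The desired upper bound survives, but only after this summation (and a similar one at the corners); it is exactly the analysis the paper's torus trick is designed to avoid.

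The genuine gap is in your lower bound for $\theta(p)$. Your annulus estimate, obtained by a union bound from the point-to-boundary decay of Proposition~\ref{exponential_decay}, is $C\,2^kL_p\,e^{-c2^k}$; the entropy factor $2^kL_p$ makes this vacuous (larger than $1$) for every $k$ with $2^k\lesssim \log L_p$. Summability in $k$ is not what is needed: you need the total failure probability to be bounded by a constant strictly less than $1$ \emph{uniformly in} $p$, and your sum is of order $L_p$. Starting ``from some $k_0$ on'' forces $2^{k_0}\gtrsim\log L_p$, and bridging the $\sim\log\log L_p$ intermediate annuli with the only available constant-order crossing bounds produces a spurious factor $(\log L_p)^{-c}$, so the construction as written does not yield $\theta(p)\ge c\,\P[A_1(L_p)]$. (The survey glosses the same difficulty when it asserts in one line that $\Pp[A_1(L_p,n)]\ge C_2$ for all $n\ge L_p$; but that is the correct statement to aim for.) The fix is not a sharper union bound but a renormalization of crossing events: iterate the doubling inequality from the proof of Proposition~\ref{exponential_decay} applied to \emph{closed crossings of rectangles} above scale $L_p$, so that the failure probability at scale $2^kL_p$ is squared at each doubling and decays like $e^{-c2^k}$ with no volume factor. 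With that input, either your nested-circuit-plus-Harris construction or the paper's shorter route --- $\Pp[A_1(n)]\ge c\,\Pp[A_1(L_p)]\,\Pp[A_1(L_p,n)]$ by quasi-multiplicativity (Theorem~\ref{quasi-multiplicativity}), Theorem~\ref{no_variation}, then $n\to\infty$ --- completes the proof.
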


Note that we reached our original goal since Theorem~\ref{main_theorem} follows readily from Theorems~\ref{exponents} and~\ref{CD}. Indeed,
  Theorem~\ref{exponents} gives that $\P[A_{1010}(n)]=n^{-5/4+o(1)}$
  and $\P[A_1(n)]=n^{-5/48+o(1)}$.  Theorem~\ref{CD} implies that
  $\theta(p)=(p-1/2)^{5/36+o(1)}$, which is exactly the claim of
  Theorem~\ref{main_theorem}.

More generally, if we only assume the existence of $\alpha_1$ and
$\alpha_4$ such that $\P[A_{1010}(n)]=n^{-\alpha_4+o(1)}$ and
$\P[A_1(n)]=n^{-\alpha_1+o(1)}$, the previous statement implies the
existence of $\nu$ and $\beta$ such that
$L_p(\ep)=(p-1/2)^{-\nu+o(1)}$ and
$\theta(p)=(p-1/2)^{\beta+o(1)}$. Furthermore, $(2-\alpha_4) \nu = 1$
and $\beta = \alpha_1 \nu$.  This connection between different critical
exponents is called a {\em scaling relation}.

\begin{proof}[Proof of Theorem~\ref{CD}]
  In the proof, $C_1,C_2,\dots$ are constants in $(0,\infty)$
  depending on $p_0$ and $\ep$ only.  Let us deal with the second
  displayed equation first. Let $L_p=L_p(\ep)$.  On the one hand, it
  is straightforward that $\theta(p)=\Pp(0\leftrightarrow
  \infty)\le\mathbb P_{p}[A_1(L_p)]\le C_1\P[A_1(L_p)]$ thanks to
  Theorem~\ref{no_variation}.

  Since a circuit surrounding $\Lambda_{L_p}$ has length at least
  $L_p=L_{1-p}$, Proposition~\ref{exponential_decay} implies that
  $\Pp[A_1(L_p,n)]\ge C_2$ for any $n\ge L_p$.  Quasi-multiplicativity
  and Theorem~\ref{no_variation} imply
  $$\Pp[A_1(n)]\ge \Pp[A_1(L_p)]\Pp[A_1(L_p,n)]\ge C_3\Pp[A_1(L_p)]\ge C_4\P[A_1(L_p)].$$
  The claim follows by letting $n$ go to infinity.

  \medbreak

  We now turn to the first displayed equation. The right-hand inequality is
  a fairly straightforward consequence of \eqref{abcd2} and
  Theorem~\ref{no_variation}. Indeed, set $E(L_p)$ be the event that
  $[0,L_p]^2$ is crossed horizontally by an open path. Since
  $L_{p'}\ge L_p$ for $\frac12<p'\le p$, we find that
  \begin{align*}1&\ge \Pp[E(L_p)]-\P[E(L_p)]=\int_{\frac12}^p\frac{d}{dp'}\mathbb P_{p'}[E(L_p)]dp'\\
    &\ge C_5\int_{\frac12}^pL_p^2\mathbb P_{p'}[A_{1010}(L_p)]dp'\ge C_6\int_{\frac12}^pL_p^2\P[A_{1010}(L_p)]dp'\\
    &= C_7(p-\textstyle\frac12)L_p^2\P[A_{1010}(L_p)].\end{align*} The
  first equality is due to Russo's formula. The next two steps are due
  to \eqref{abcd2} followed by Theorem~\ref{no_variation}.  \medbreak
  Let us turn to the second inequality of the first displayed
  equation. Consider the torus $\mathbb T_n$ of size $n$, which can be seen as
  $\mathbb R^2$ quotiented by the following equivalence relation:
  $(x,y)\sim (x',y')$ iff $n$ divides $x-x'$ and $y-y'$. The first
  homology group of $\mathbb T_n$ is isomorphic to $\mathbb Z^2$. Let
  $[\gamma]\in \mathbb Z^2$ be the homology class of a circuit
  $\gamma$.

  Let $\mathbb T_n$ be the image of $\mathbb T$ by the canonical
  projection. A circuit of vertices in $\mathbb T$ can be identified
  to the circuit in $\mathbb T_n$ created by joining neighboring vertices by a
  segment of length $1$. Let $F(n)$ be the event that there exists a
  circuit of open vertices on $\mathbb T_n$ whose homology class in
  $\mathbb Z^2$ has non-zero first coordinate, or in other words, which is winding around $\mathbb T_n$ ``in the vertical direction''.

  If $v$ is pivotal for $F(L_p)$, there are necessarily four paths of
  alternating colors going to distance $L_p/2$ from $v$. Hence,
  $$ \frac{d}{dp'}\mathbb P_{p'}[F(L_p)]\le L_p^2\mathbb P_{p'}[A_{1010}(L_p/2)]\le C_8L_p^2\P[A_{1010}(L_p)]$$
  by quasi-multiplicativity and Theorem~\ref{no_variation}.  By
  duality, one easily obtain that $\P[F(L_p)]\le 1/2$. Now, the definition of $L_p$ together with a RSW-type argument implies that $\Pp[F(L_p)]$ is larger than $\frac34$ if $\ep$ is
  chosen small enough. Indeed, one can use a construction involving
  crossings in long rectangles. As a consequence,  \begin{equation*}\tfrac14\le \mathbb P_p(F(L_p))-\P(F(L_p))= \int_\frac12^p\frac{d}{dp'}\mathbb P_{p'}[F(L_p)]dp'\le  C_9(p-\textstyle\frac12)L_p^2\P[A_{1010}(L_p)]\end{equation*}
  hence finishing the proof.

  %
\end{proof}

\section{A few open questions}

\paragraph{Percolation on the triangular lattice}

Site percolation on the triangular lattice is now very well
understood, yet several questions remain open. We select
three of them.

We know the behavior of most thermodynamical quantities (the cluster
density $\theta$, the truncated mean-cluster size
$\chi(p)=(p-1/2)^{-\gamma+o(1)}$ as $p\rightarrow p_c$, the
two-point functions $\P(0\leftrightarrow x)=|x|^{-\eta+o(1)}$ as
$x\rightarrow \infty$ and many others). Nevertheless, the behavior of
the following fundamental quantity remains unproved:

\begin{question}
  Prove that the mean number of clusters per site $\kappa(p)=\mathbb
  E_p(|C|^{-1})$ behaves like $|1/2-p|^{2+\alpha+o(1)}$, where $C$ is
  the cluster at the origin and $\alpha=-2/3$.
\end{question}

Interestingly, the critical exponent for $j\neq 1$ disjoint arms of
the same color is not equal to the polychromatic arms exponent
\cite{BN10}.  A natural open question is to compute these exponents:

\begin{question}
  Compute the monochromatic exponents.
\end{question}

Even the existence of the exponents in the discrete model is not
completely understood, because we miss estimates up to constants:

\begin{question}
  Refine the error term in the arm probabilities from $(n/N)^{\alpha_j +
  o(1)}$ to $(n/N)^{\alpha_j}\Theta(1)$.
\end{question}

A result in this direction was obtained in \cite{MNW12} for a half-plane arm-event as a byproduct of a quantitative Cardy's formula (see also \cite{BCS12} for another quantitative version of Cardy's formula).
\paragraph{Percolation on other graphs}

Conformal invariance has been proved only for site percolation on the
triangular lattice. In physics, it is conjectured that the scaling
limit of percolation should be universal, meaning that it should not
depend on the lattice. For instance, interfaces of bond-percolation on
the square lattice at criticality (when the bond-parameter is 1/2)
should also converge to $\SLE(6)$.

\begin{question}
  Prove conformal invariance for critical percolation on another
  planar lattice.
\end{question}

Some progress has been made in \cite{BCL10}. For general graphs, the
question of embedding the graph becomes crucial. Indeed, if one embeds
the square lattice by gluing long rectangles, then the model will not
be rotationally invariant. We refer to \cite{Bef08a} for further
details on the subject.

\begin{question}
  For a general lattice, how may one construct a natural embedding on
  which percolation is conformally invariant in the scaling limit?
\end{question}

In order to understand universality, a natural class of lattices
consists in those for which box crossings probabilities can be
studied. Note that proofs of crossing estimates (Corollary~\ref{any_shape}) often invoke some symmetry
(rotational invariance for instance) as well as strict planarity, but
neither of these seem to be absolutely needed. A proof valid for
lattices without one of these properties would be of great
significance:

\begin{question}
  Prove crossing estimates for critical percolation on all planar (and
  possibly quasi-isometric to planar) lattices.
\end{question}

Let us mention that an important step towards the case of general
lattices was accomplished in \cite{GM11a,GM11b,GM13}, where critical
anisotropic percolation models on the hexagonal, triangular and square
lattices is studied.

Percolation in high dimension is well understood (see \emph{e.g.}
\cite{HS94} and references therein), thanks to the so-called triangle
condition and the associated lace-expansion techniques. In particular,
several critical exponents have been derived (including recently the
arm exponents \cite{KN09}) and $\theta(p_c)$ has been proved to be
equal to $0$. In intermediate dimensions, the critical phase is not
understood. For instance, one of the main conjectures in probability
is to prove that $\theta(p_c)=0$ for bond percolation on $\Z^3$. Even
weakening of this conjecture seems to be very hard. For instance, the
same question on the graph $\Z^2\times\{0,\dots,k\}$ has only been
solved very recently (see~\cite{DNS12} for site percolation in the
case $k=1$, and \cite{sandwich} for the general case).

\paragraph{Other two-dimensional models of statistical physics}

Conformal invariance (for instance of crossing probabilities) is not
restricted to percolation (see \cite{Smi06,Smi10} and references
therein). It should hold for a wide class of two-dimensional lattice
models at criticality. Among natural generalizations of percolation,
we mention the class of random-cluster models and of loop
$O(n)$-models (including the Ising model and the self-avoiding
walk). The only three models in this family for which conformal
invariance has been proved are the Ising model (the $O(n)$-model with
$n=1$), the $q\!=\!2$ random cluster model (which is a geometric
representation of the Ising model), and the uniform spanning tree.

\begin{question}
  Prove conformal invariance of another two-dimensional critical
  lattice model of percolation type.
\end{question}

\paragraph{Acknowledgments} The authors were supported by the ANR grants
BLAN06-3-134462 and MAC2 10-BLAN-0123, the EU Marie-Curie RTN CODY, the ERC AG
CONFRA, as well as by the Swiss {FNS}.

\bibliographystyle{amsalpha}
\bibliography{0_bibli_these}

\end{document}